\tikzstyle{vertex}=[circle, draw, inner sep=0pt, minimum size=4pt]
\newtheorem{theorem}{Theorem}[section]
\newtheorem{proposition}[theorem]{Proposition}
\newtheorem{lemma}[theorem]{Lemma}
\newtheorem{corollary}[theorem]{Corollary}
\theoremstyle{definition}
\newtheorem{definition}[theorem]{Definition}
\newtheorem{example}[theorem]{Example}
\theoremstyle{remark}
\newtheorem{remark}{Remark}
\newcounter{x}
\newcounter{y}
\newcounter{z}
\newcommand\xaxis{210}
\newcommand\yaxis{-30}
\newcommand\zaxis{90}
\newcommand\topside[3]{
  \fill[fill=white, draw=black,shift={(\xaxis:#1)},shift={(\yaxis:#2)},
  shift={(\zaxis:#3)}] (0,0) -- (30:1) -- (0,1) --(150:1)--(0,0);
}
\newcommand\ttopside[3]{
  \fill[fill=white, draw=black,dashed,shift={(\xaxis:#1)},shift={(\yaxis:#2)},
  shift={(\zaxis:#3)}] (0,0) -- (30:1) -- (0,1) --(150:1)--(0,0);
}
\newcommand\leftside[3]{
  \fill[fill=cyan, draw=black, shift={(\xaxis:#1)},shift={(\yaxis:#2)},
  shift={(\zaxis:#3)}] (0,0) -- (0,-1) -- (210:1) --(150:1)--(0,0);
}
\newcommand\lleftside[3]{
  \fill[fill=white, draw=black,dashed, shift={(\xaxis:#1)},shift={(\yaxis:#2)},
  shift={(\zaxis:#3)}] (0,0) -- (0,-1) -- (210:1) --(150:1)--(0,0);
}
\newcommand\rightside[3]{
  \fill[fill=orange, draw=black,shift={(\xaxis:#1)},shift={(\yaxis:#2)},
  shift={(\zaxis:#3)}] (0,0) -- (30:1) -- (-30:1) --(0,-1)--(0,0);
}
\newcommand\rrightside[3]{
  \fill[fill=white, draw=black,dashed,shift={(\xaxis:#1)},shift={(\yaxis:#2)},
  shift={(\zaxis:#3)}] (0,0) -- (30:1) -- (-30:1) --(0,-1)--(0,0);
}
\newcommand\cube[3]{
  \topside{#1}{#2}{#3} \leftside{#1}{#2}{#3} \rightside{#1}{#2}{#3}
}
\newcommand\ccube[3]{
  \ttopside{#1}{#2}{#3} 
  \lleftside{#1}{#2}{#3} \rrightside{#1}{#2}{#3}
}
\newcommand\cccube[3]{
  \leftside{#1}{#2}{#3} \rightside{#1}{#2}{#3}
}
\newcommand\ocube[3]{
  \fill[fill=white, draw=black,shift={(\xaxis:#1)},shift={(\yaxis:#2)},
  shift={(\zaxis:#3)}] (0,0) -- (30:1) -- (0,1) --(150:1)--(0,0);
}
\newcommand\planepartition[1]{
 \setcounter{x}{-1}
  \foreach \a in {#1} {
    \addtocounter{x}{1}
    \setcounter{y}{-1}
    \foreach \b in \a {
      \addtocounter{y}{1}
      \setcounter{z}{-1}
      \foreach \c in {0,...,\b} {
        \addtocounter{z}{1}
      \ifthenelse{\c=0}{\setcounter{z}{-1},\addtocounter{y}{0}}{
        \cube{\value{x}}{\value{y}}{\value{z}}}
      }
    }
  }
}
\newcommand\pplanepartition[1]{
 \setcounter{x}{-1}
  \foreach \a in {#1} {
    \addtocounter{x}{1}
    \setcounter{y}{-1}
    \foreach \b in \a {
      \addtocounter{y}{1}
      \setcounter{z}{-1}
      \foreach \c in {0,...,\b} {
        \addtocounter{z}{1}
      \ifthenelse{\c=0}{\setcounter{z}{-1},\addtocounter{y}{0}}{
        \ccube{\value{x}}{\value{y}}{\value{z}}}
      }
    }
  }
}
\newcommand\oplanepartition[1]{
 \setcounter{x}{-1}
  \foreach \a in {#1} {
    \addtocounter{x}{1}
    \setcounter{y}{-1}
    \foreach \b in \a {
      \addtocounter{y}{1}
      \setcounter{z}{-1}
      \foreach \c in {0,...,\b} {
        \addtocounter{z}{1}
      \ifthenelse{\c=0}{\setcounter{z}{-1},\addtocounter{y}{0}}{
        \ocube{\value{x}}{\value{y}}{\value{z}}}
      }
    }
  }
}
\title[MacMahon's statistics on higher-dimensional partitions]{
MacMahon's statistics on higher-dimensional partitions
} 
\author[Alimzhan Amanov \and Damir Yeliussizov]{Alimzhan Amanov \and Damir Yeliussizov}
\address{KBTU, Almaty, Kazakhstan}
\email{\href{mailto:alimzhan.amanov@gmail.com}{alimzhan.amanov@gmail.com}, \href{mailto:yeldamir@gmail.com}{yeldamir@gmail.com}}
\begin{document}

\begin{abstract}
We study some combinatorial properties of higher-dimensional partitions which generalize plane partitions. We present a natural bijection between $d$-dimensional partitions and $d$-dimensional arrays of nonnegative integers. This bijection has a number of important applications. We introduce a statistic on $d$-dimensional partitions, called the {corner-hook volume}, whose generating function has the formula of MacMahon's conjecture. 
We obtain multivariable formulas whose specializations give   
analogues of various formulas known for plane partitions.  
We also introduce higher-dimensional analogues of dual Grothendieck polynomials which are quasisymmetric functions and whose specializations enumerate higher-dimensional partitions of a given shape. Finally, we show probabilistic connections with a directed last passage percolation model in $\mathbb{Z}^d$. 
\end{abstract}

\maketitle


\section{Introduction}

{\it Higher-dimensional partitions} 
are classical combinatorial objects introduced by 
MacMahon over a century ago. While the concept itself is a straightforward generalization of the usual {\it integer partitions}, the problems related to it are very challenging. 
For (2-dimensional) {\it plane partitions}, MacMahon obtained his celebrated 
enumerative formulas \cite{macmahon} (cf. \cite[Ch.~7]{sta}). For general $d$-dimensional partitions, he only conjectured a formula of the volume generating function, which was later computed to be incorrect \cite{atkin}. 

Despite long interest and many connections to various fields including algebra, combinatorics, 
geometry, probability and statistical physics, the subject remains rather mysterious---very little is known about $d$-dimensional partitions for $d \ge 3$. 
See \cite{atkin, knuth, gov} on some computational and enumerative aspects;  \cite{mr, bgp, dg} on asymptotic data and connections to physics;  \cite{bbs, nekrasov, ck} on further aspects particularly related to the theory of Donaldson-Thomas invariants. 
(See also the remarks and references in final Sec.~\ref{sec:final}.)

At the same time, the theory of plane partitions has greatly developed, see \cite{andrews, sta, krat} and many references therein.
Its success mainly comes from the theory of symmetric functions, especially by using the Robinson-Schensted-Knuth (RSK) correspondence and Schur polynomials. 
The lack of tools for 
higher-dimensional generalizations makes it difficult to approach them, 
and here one can try to develop  analogous methods. 
This paper is in this direction.

\vspace{0.5em}

Let us summarize our results.


\subsection{Higher-dimensional partitions and matrices} Firstly, we present a natural bijection between $d$-dimensional arrays of nonnegative integers and $d$-dimensional partitions, see Sec.~\ref{sec:bij}. 
Roughly speaking, any $d$-dimensional partition can be viewed as a matrix of largest paths for some 
source weight matrix. The bijection has nice properties which relate natural statistics for both objects. We then give a number of applications. 

\subsection{Corner-hook volume and interpretation of MacMahon's numbers} One of the main consequences of our bijection is the multivariable generating series presented in Theorem~\ref{mult} whose specializations allow to explicitly compute generating functions for certain statistics on $d$-dimensional partitions. In particular, we introduce two statistics on $d$-dimensional partitions: {\it corners} $\mathrm{cor}(\cdot)$ and {\it corner-hook volume} $|\cdot|_{ch}$ (see Sec.~\ref{sec:mac} and \ref{sec:mac1} for definitions) with generating functions shown below. 

\begin{theorem}[Corner-hook generating function, cf. Corollary~\ref{full}]\label{one}
We have the following generating function
$$
\sum_{\pi} t^{\mathrm{cor}(\pi)} q^{|\pi|_{ch}} = \prod_{n = 1}^{\infty} {(1 - t q^n)^{-\binom{n + d - 2}{d-1}}},
$$
where the sum runs over $d$-dimensional partitions $\pi$. 
\end{theorem}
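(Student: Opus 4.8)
The plan is to obtain the identity as a specialization of the multivariable master formula of Theorem~\ref{mult}, after transporting the sum over $d$-dimensional partitions to a sum over $d$-dimensional arrays through the bijection of Sec.~\ref{sec:bij}. First I would invoke that bijection to identify each $d$-dimensional partition $\pi$ with a finitely supported array $A=(a_{\mathbf i})_{\mathbf i\in\mathbb{Z}_{\ge 0}^d}$ of nonnegative integers. The essential input, which I would extract from the definitions in Sec.~\ref{sec:mac} and \ref{sec:mac1}, is that under this identification the two statistics linearize in the array entries, namely
\[
\mathrm{cor}(\pi)=\sum_{\mathbf i} a_{\mathbf i},\qquad |\pi|_{ch}=\sum_{\mathbf i} h(\mathbf i)\,a_{\mathbf i},\qquad h(\mathbf i):=1+i_1+\cdots+i_d .
\]
Granting this, the weight $t^{\mathrm{cor}(\pi)}q^{|\pi|_{ch}}$ splits into per-cell contributions, and the constraint-free nature of the array (every finitely supported $A$ occurs exactly once) lets the whole sum factor.

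Second, I would perform the factorization cell by cell. Since the entries $a_{\mathbf i}$ vary independently,
\[
\sum_{\pi} t^{\mathrm{cor}(\pi)} q^{|\pi|_{ch}}=\prod_{\mathbf i\in\mathbb{Z}_{\ge 0}^d}\Bigl(\sum_{a\ge 0} t^{a} q^{h(\mathbf i)\,a}\Bigr)=\prod_{\mathbf i\in\mathbb{Z}_{\ge 0}^d}\frac{1}{1-t\,q^{h(\mathbf i)}},
\]
where each geometric series is summable as a formal power series in $q$ because, for fixed total degree, only finitely many cells contribute. Equivalently, this is the specialization of Theorem~\ref{mult} obtained by setting the variable attached to each cell $\mathbf i$ equal to the monomial $t\,q^{h(\mathbf i)}$; I would check that the master product collapses to the displayed right-hand side under this substitution, which is the bookkeeping heart of the argument.

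Third, I would regroup the product according to the value of the hook length. Gathering the cells $\mathbf i$ with $h(\mathbf i)=n$, i.e. those with $i_1+\cdots+i_d=n-1$, the number of such cells equals the number of weak compositions of $n-1$ into $d$ parts, which by stars and bars is $\binom{n+d-2}{d-1}$. Therefore
\[
\prod_{\mathbf i\in\mathbb{Z}_{\ge 0}^d}\frac{1}{1-t\,q^{h(\mathbf i)}}=\prod_{n=1}^{\infty}(1-tq^n)^{-\binom{n+d-2}{d-1}},
\]
which is exactly the asserted generating function.

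The step I expect to be the genuine obstacle is the first one: establishing, directly from the combinatorial definitions, that $\mathrm{cor}$ and $|\cdot|_{ch}$ really do linearize as $\sum a_{\mathbf i}$ and $\sum h(\mathbf i)\,a_{\mathbf i}$ under the bijection, equivalently that the cell weight provided by Theorem~\ref{mult} specializes to $t\,q^{h(\mathbf i)}$ with $h(\mathbf i)=1+i_1+\cdots+i_d$. This amounts to tracking how the \emph{corner} cells and their hook lengths in $\pi$ are read off from the source array. Once that dictionary is in place, the remaining two steps---the geometric factorization and the binomial count of cells at each hook level---are routine formal manipulations.
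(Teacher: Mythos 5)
Your proposal is correct and takes essentially the same route as the paper: the linearization you flag as the only real obstacle is exactly the paper's Lemma~\ref{wei}, which follows in one line from the inverse formula $a_{\mathbf{i}} = \pi_{\mathbf{i}} - \max_{\ell \in [d]} \pi_{\mathbf{i}+\mathbf{e}_\ell}$ of the bijection, showing that $a_{\mathbf{i}}$ counts the corners of $\pi$ lying over the cell $\mathbf{i}$. With that in hand, the paper does the same factorization into geometric series (Theorem~\ref{mult}), the same specialization $x^{(1)}_i = tq^i$, $x^{(k)}_i = q^{i-1}$ for $k \ge 2$ (Theorem~\ref{shaped}), and the same stars-and-bars count of cells at each cohook level (Corollary~\ref{full}).
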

For $d = 2$, this formula is equidistributed with {Stanley's trace generating function} \cite[Thm.~7.20.1]{sta} but the statistics are not identical. MacMahon conjectured \cite{macmahon} that the generating function 
$$
\sum_{n = 0}^{\infty} m_d(n)\, q^n = \prod_{n = 1}^{\infty} {(1 - q^n)^{-\binom{n + d - 2}{d-1}}}
$$
gives the volume generating function $\sum_{\pi} q^{|\pi|}$ for $d$-dimensional partitions. This was shown to be incorrect for $d \ge 3$ 
\cite{atkin}. However, from Theorem~\ref{one} we obtain the following interpretation of {\it MacMahon's numbers} $m_d(n)$, thus `correcting' his guess via the corner-hook volume statistic 
so that 
$$m_d(n) = |\{\text{$d$-dimensional partitions } \pi : |\pi|_{ch} = n\}|.$$

More generally, we also prove results for generating functions over partitions with fixed shape.

\begin{theorem}[Corner-hook generating function with fixed shape, cf. Theorem~\ref{shaped}] Let $\rho$ be a
shape of a fixed $d$-dimensional partition.
We have the following generating function 
$$
\sum_{\mathrm{sh}(\pi) \subseteq \rho} t^{\mathrm{cor}(\pi)} q^{|\pi|_{ch}} = 
\prod_{(i_1,\ldots, i_d) \in \rho} {\left(1 - t q^{i_1 + \ldots + i_d - d + 1}\right)^{-1}},
$$
where the sum runs over $d$-dimensional partitions $\pi$ of shape $\rho$. 
\end{theorem}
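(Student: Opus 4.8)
The plan is to derive the identity as a specialization of the master multivariable generating function of Theorem~\ref{mult}, using the bijection of Sec.~\ref{sec:bij} to reduce the sum over shaped $d$-dimensional partitions to an unconstrained sum over $d$-dimensional arrays of nonnegative integers supported on $\rho$. Concretely, the bijection sends a partition $\pi$ with $\mathrm{sh}(\pi) \subseteq \rho$ to an array $(a_c)_{c \in \rho}$ with $a_c \in \mathbb{Z}_{\geq 0}$, and the whole point is that the two statistics appearing in the generating function decouple over the cells $c = (i_1,\ldots,i_d)$ of $\rho$ once we pass to the array.

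The key step is the translation of statistics. Writing $h(c) = i_1 + \cdots + i_d - d + 1$ for the value attached to a cell $c = (i_1,\ldots,i_d)$, I would show that under the bijection
\[
\mathrm{cor}(\pi) = \sum_{c \in \rho} a_c, \qquad |\pi|_{ch} = \sum_{c \in \rho} h(c)\, a_c .
\]
The first identity says that the number of corners is the total mass of the array; the second says that the corner-hook volume is the same mass weighted by the hook value $h(c)$ of each cell. Granting this, the generating function factorizes completely over cells, since the array entries range independently over $\mathbb{Z}_{\geq 0}$:
\[
\sum_{\mathrm{sh}(\pi) \subseteq \rho} t^{\mathrm{cor}(\pi)} q^{|\pi|_{ch}} = \prod_{c \in \rho} \sum_{a_c \geq 0} \left(t\, q^{h(c)}\right)^{a_c} = \prod_{(i_1,\ldots,i_d) \in \rho} \left(1 - t\, q^{i_1 + \cdots + i_d - d + 1}\right)^{-1},
\]
which is exactly the claimed product. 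Equivalently, one specializes Theorem~\ref{mult}: setting the monomial of the cell $(i_1,\ldots,i_d)$ equal to $t\,q^{i_1+\cdots+i_d-d+1}$ --- e.g.\ by putting the weight $q^{i_k-1}$ on the $k$-th coordinate for $k \geq 2$ and $t\,q^{i_1}$ on the first, since then $t\,q^{i_1}\prod_{k\geq 2}q^{i_k-1} = t\,q^{i_1+\cdots+i_d-d+1}$ --- collapses the multivariable product into the single-variable one above.

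The main obstacle is verifying the statistic translation in the displayed identities, i.e.\ that the corner-hook volume is additive over cells carrying precisely the weight $h(c) = i_1 + \cdots + i_d - d + 1$. This is the heart of the matter and is where the definitions of $\mathrm{cor}(\cdot)$ and $|\cdot|_{ch}$ from Sec.~\ref{sec:mac}--\ref{sec:mac1} must be matched against the structure of the bijection; the normalization $-d+1$ (so that the apex cell $(1,\ldots,1)$ carries hook value $1$) is exactly what makes the minimal power of $q$ equal to $1$ and is the one place the computation must be done with care. Once this is in place, the factorization and the unconstrained range of each $a_c$ are routine, and taking $\rho$ to be the full positive orthant recovers Theorem~\ref{one} by grouping cells according to their common hook value $n$, of which there are $\binom{n+d-2}{d-1}$.
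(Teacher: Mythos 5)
Your proposal is correct and follows essentially the same route as the paper: the paper proves Theorem~\ref{shaped} by exactly the specialization you describe ($x^{(1)}_{i} = t q^{i}$, $x^{(k)}_{i} = q^{i-1}$ for $k \ge 2$) applied to Theorem~\ref{mult}, and your ``key step'' identities $\mathrm{cor}(\pi) = \sum_{c} a_c$ and $|\pi|_{ch} = \sum_{c} h(c)\, a_c$ are precisely the content of Lemma~\ref{wei}, namely that $a_{\mathbf{i}} = |\{i_{d+1} : (\mathbf{i}, i_{d+1}) \in \mathrm{Cor}(\pi)\}|$ under the bijection $\Phi$. The only cosmetic difference is that you unwind the multivariable identity into a direct factorization over cells, whereas the paper packages that factorization once in Theorem~\ref{mult} and then specializes.
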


\subsection{$d$-dimensional Grothendieck polynomials} To develop tools for studying $d$-dimensional partitions, one might be looking for 
analogues of Schur polynomials whose specializations allow to enumerate them. 
We work in a slightly different direction. In Sec.~\ref{sec:ddg} we define higher-dimensional analogues of {\it dual Grothendieck polynomials}. These new functions are indexed by shapes of $d$-dimensional partitions and in specializations they compute the number of such partitions. 
For $d = 2$, they turn into the dual symmetric Grothendieck polynomials (indexed by partitions) 
known as $K$-theoretic analogues of Schur polynomials introduced in \cite{lp} (see also \cite{dy, dy2} for more on these functions). 

Let us illustrate our results in the special case for (3-dimensional) {\it solid partitions}. 
We define the polynomials (see eq.~\eqref{g3d}) $g_{\pi}(\mathbf{x}; \mathbf{y}; \mathbf{z})$ in three sets of variables indexed by plane partitions $\{\pi\}$. These polynomials enumerate solid partitions within a given shape, e.g. we have
$$
g_{[b] \times [c] \times [d]}(1^{a+1}; 1^{b}; 1^c) = \text{number of solid partitions inside the box $[a] \times [b] \times [c] \times [d]$.}
$$
We show that the following generating series identity holds.
\begin{theorem}[Cauchy-type identity for 3d Grothendieck polynomials, cf. Corollary~\ref{gcauchy}] 
We have 
$$
\sum_{\pi} g_{\pi}(\mathbf{x}; \mathbf{y}; \mathbf{z}) = \prod_{i = 1}^{a} \prod_{j = 1}^{b} \prod_{k = 1}^{c} \frac{1}{1 - x_i y_j z_k},
$$
where the sum runs over plane partitions $\pi$ with shape inside the rectangle $b \times c$. 
\end{theorem}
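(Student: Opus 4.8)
The plan is to read both sides as generating functions over the same combinatorial set and to match them term by term through the bijection of Section~\ref{sec:bij}. First I would expand the right-hand side: since
$$
\prod_{i=1}^a\prod_{j=1}^b\prod_{k=1}^c\frac{1}{1-x_iy_jz_k}=\sum_{M}\ \prod_{i,j,k}(x_iy_jz_k)^{m_{ijk}},
$$
where $M=(m_{ijk})$ runs over all $3$-dimensional arrays of nonnegative integers supported on the box $[a]\times[b]\times[c]$, the coefficient bookkeeping shows that the monomial attached to $M$ is $\mathbf{x}^{u}\mathbf{y}^{v}\mathbf{z}^{w}$, with $u_i=\sum_{j,k}m_{ijk}$, $v_j=\sum_{i,k}m_{ijk}$, $w_k=\sum_{i,j}m_{ijk}$ the three marginal sums of $M$. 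Thus the right-hand side is the generating function of $3$-dimensional arrays in $[a]\times[b]\times[c]$ graded by their three marginals.

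Next I would unpack the left-hand side from the definition of $g_\pi$ in eq.~\eqref{g3d}. By construction $g_\pi(\mathbf{x};\mathbf{y};\mathbf{z})$ is the generating polynomial of solid partitions of shape $\pi$ graded by three directional statistics, and since only $a$ variables appear in $\mathbf{x}$, only solid partitions with entries bounded by $a$ contribute. Summing over all plane partitions $\pi$ whose base lies in the $b\times c$ rectangle therefore collects exactly those solid partitions $P$ whose base projects into $[b]\times[c]$ and whose entries are at most $a$, each weighted by $\mathbf{x}^{\alpha(P)}\mathbf{y}^{\beta(P)}\mathbf{z}^{\gamma(P)}$ for the three directional weights $\alpha,\beta,\gamma$. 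In this way $\sum_\pi g_\pi$ becomes a single generating function over a family of solid partitions, the index $\pi$ merely recording the shape.

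The heart of the argument is then to identify this family with the arrays counted on the right. Here I would invoke the bijection of Section~\ref{sec:bij} in the case $d=3$, between $3$-dimensional arrays and solid partitions, together with its statistic-preserving properties. Because the solid partitions on the left have a bounded entry-direction (entries $\le a$) while the arrays on the right have a bounded index-direction ($i\le a$), the cleanest route is to precompose the bijection with the elementary rotation of the solid Young diagram in the $4$-dimensional box that trades the value-coordinate $\le a$ for the free third index; this turns the left-hand family into solid partitions supported on $[a]\times[b]\times[c]$ with unbounded entries, which is exactly the image of the bijection on arrays in $[a]\times[b]\times[c]$. I would then check that under this composite the three directional weights $\alpha(P),\beta(P),\gamma(P)$ correspond precisely to the three marginals $u(M),v(M),w(M)$, so that the two generating functions agree monomial by monomial.

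The main obstacle I anticipate is exactly this compatibility check: verifying that the composite map is a genuine bijection onto arrays in the correct box and that it carries the three gradings to the three marginals. The shape bookkeeping (that letting $\pi$ range over all bases inside $b\times c$ precisely realizes the box constraints $j\le b$, $k\le c$, while the entry bound $\le a$ matches $i\le a$) together with the tracking of statistics through both the rotation and the bijection of Section~\ref{sec:bij} are the fiddly parts; once they are in place the identity follows, and it can alternatively be read off as the $d=3$ specialization of the multivariable series of Theorem~\ref{mult}.
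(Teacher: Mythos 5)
Your overall strategy---expanding the right-hand side as the generating function of $\mathbb{N}$-arrays on $[a]\times[b]\times[c]$ graded by their three marginals, collapsing the sum over $\pi$ into a single sum over solid partitions, and matching the two sides through the bijection $\Phi$ of Section~\ref{sec:bij}---is exactly the paper's route: the paper packages the array/partition matching as Theorem~\ref{mult} (proved from Theorem~\ref{bij} together with the weight-preservation Lemma~\ref{wei}) and deduces the Cauchy identity by the same interchange of summation you perform. However, your execution contains a genuine gap, and it originates in a misreading of the definition \eqref{g3d}. In the paper, $g_{\rho}(\mathbf{x};\mathbf{y};\mathbf{z})$ sums over solid partitions whose \emph{support} (first three indices) lies in $[a]\times[b]\times[c]$ and whose \emph{entries are unbounded}; the constraint is $\mathrm{sh}_1(\pi)=D(\rho)$, the projection of $D(\pi)$ along the first axis, and $x_i y_j z_k$ records the first three coordinates of each corner. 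There is no condition ``entries at most $a$'': the bound $a$ is on the first index, not on the values. With the correct reading, the union over $\rho$ of the families $\{\pi:\mathrm{sh}_1(\pi)=D(\rho)\}$ is already the set of solid partitions supported in the box $[a]\times[b]\times[c]$---precisely the image of $\Phi$ on arrays in that box---so no rotation is needed at all, and Lemma~\ref{wei} identifies the corner weight of $\pi$ with the marginal weight of $\Phi^{-1}(\pi)$, finishing the proof.

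The rotation you propose is not only unnecessary but is also where the argument, as written, would break down. The corner set $\mathrm{Cor}(\pi)$ is \emph{not} equivariant under coordinate permutations involving the last (value) coordinate: corners are defined by the outward conditions $\mathbf{i}+\mathbf{e}_{\ell}\notin D(\pi)$ only for $\ell\in[d]$, with the $(d+1)$-st direction deliberately excluded. After trading the value coordinate for the first index, the weights therefore sit on a \emph{different} set of cells of the rotated diagram, and your deferred ``compatibility check'' that the three directional weights become the three marginals is not a routine verification---it amounts to proving the equivalence of the corner-based definition with a reverse-plane-partition-style definition, which is nontrivial even for $d=2$ (cf. the footnote to Example~\ref{ex:d2}), i.e., to redoing Lemma~\ref{wei} in a rotated frame. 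Since you explicitly leave exactly this step open, the proposal has a hole at its central point; your closing remark that the identity ``can alternatively be read off'' from Theorem~\ref{mult} is in fact the correct and direct argument, and reading \eqref{g3d} correctly removes both the hole and the need for any rotation.
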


It is known that dual Grothendieck polynomials (for $d = 2$) are symmetric (in $\mathbf{x}$). As we show, this is no longer the case for $d \ge 3$. However, we prove that these new functions are {\it quasisymmetric}, 
the next 
 known 
class containing symmetric functions (see e.g. \cite[Ch.~7.19]{sta}). 
\begin{theorem}[cf. Theorem~\ref{qsym}]
We have: $g_{\pi}(\mathbf{x}; \mathbf{y}; \mathbf{z})$ is quasisymmetric in $\mathbf{x}$. 
\end{theorem}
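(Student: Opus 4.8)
The plan is to argue directly from the combinatorial definition of $g_\pi(\mathbf{x};\mathbf{y};\mathbf{z})$ in \eqref{g3d} as a weight-generating sum over solid partitions $\Pi$ of shape $\pi$, isolating the role of the $\mathbf{x}$-variables. First I would record that the monomial attached to each $\Pi$ factors as $x^{\mathrm{wt}_x(\Pi)}\,y^{\mathrm{wt}_y(\Pi)}\,z^{\mathrm{wt}_z(\Pi)}$, where the $\mathbf{x}$-weight records, for each $i$, a nonnegative multiplicity $c_i(\Pi)$ counting the lines in the $\mathbf{x}$-direction that contain the value $i$ (in analogy with the column statistic of the $d=2$ dual Grothendieck polynomials). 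Grouping the sum by $\mathbf{y},\mathbf{z}$-content gives
\[
g_\pi(\mathbf{x};\mathbf{y};\mathbf{z}) = \sum_{\mu,\nu} y^{\mu} z^{\nu}\, h_{\mu,\nu}(\mathbf{x}),
\qquad
h_{\mu,\nu}(\mathbf{x}) = \sum_{\Pi:\ \mathrm{wt}_y(\Pi)=\mu,\ \mathrm{wt}_z(\Pi)=\nu} x^{\mathrm{wt}_x(\Pi)} .
\]
A power series in $\mathbf{x}$ with coefficients in $\mathbb{Z}[[\mathbf{y},\mathbf{z}]]$ is quasisymmetric in $\mathbf{x}$ precisely when each $y^\mu z^\nu$-coefficient $h_{\mu,\nu}(\mathbf{x})$ is, so this reduces the theorem to proving that every $h_{\mu,\nu}(\mathbf{x})$ is quasisymmetric. (Note that the Cauchy identity of the previous theorem shows only that the \emph{total} sum $\sum_\pi g_\pi$ is symmetric in $\mathbf{x}$; it says nothing about an individual $g_\pi$, so a direct argument is needed.)

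Second, I would invoke the following local characterization of quasisymmetry: a bounded-degree series $f(\mathbf{x})$ lies in $\mathrm{QSym}$ if and only if, for every $i$, it is invariant under the transposition $x_i \leftrightarrow x_{i+1}$ applied to those monomials in which at least one of the exponents of $x_i$ and $x_{i+1}$ vanishes. Sufficiency is immediate: given two increasing index sets $i_1<\cdots<i_k$ and $j_1<\cdots<j_k$ carrying the same exponent tuple, one passes from one to the other by repeatedly sliding an occupied index past an adjacent empty one, each such slide being one of the allowed restricted swaps. Thus it suffices to produce, for each $i$, an involution $\phi_i$ on the solid partitions contributing to $h_{\mu,\nu}$ that preserves $\mathrm{wt}_y$ and $\mathrm{wt}_z$ and that, on the objects using at most one of the values $i,i+1$, interchanges the multiplicities $c_i$ and $c_{i+1}$.

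Third, construct $\phi_i$ as a higher-dimensional, \emph{degenerate} Bender--Knuth involution: along each fiber in the $\mathbf{x}$-direction, toggle the free occurrences of $i$ and $i+1$ exactly as in the $d=2$ dual Grothendieck case, then check that $\phi_i$ is an involution interchanging the $x_i$- and $x_{i+1}$-exponents. The decisive point---and the reason only quasisymmetry survives---is that this local toggle is guaranteed to respect the solid-partition monotonicity in the remaining directions precisely when one of the two values is absent from the relevant slice; when both $i$ and $i+1$ occur, the extra inequalities in the $\mathbf{y}$- and $\mathbf{z}$-directions obstruct the swap, so $\phi_i$ is defined and weight-interchanging only on the degenerate stratum $c_i c_{i+1}=0$, which is exactly what the characterization in the previous paragraph requires. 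I would verify in addition that $\phi_i$ leaves $\mathrm{wt}_y$ and $\mathrm{wt}_z$ untouched, since it only relabels values within single $\mathbf{x}$-fibers.

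The main obstacle is this third step: pinning down the precise $\mathbf{x}$-statistic from \eqref{g3d} and verifying carefully that the local toggle simultaneously preserves all monotonicity constraints and the $\mathbf{y},\mathbf{z}$-weights when one value is absent, while exhibiting an explicit solid partition for which the toggle fails when both $i$ and $i+1$ are present. This failure is not merely a nuisance but is in fact desirable: the same example furnishes the promised witness that $g_\pi$ need not be symmetric in $\mathbf{x}$ for $d \ge 3$, so the quasisymmetric conclusion is sharp.
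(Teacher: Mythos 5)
Your reduction (steps 1--2) is correct: quasisymmetry of each coefficient $h_{\mu,\nu}(\mathbf{x})$ does follow from invariance under the transpositions $x_i \leftrightarrow x_{i+1}$ restricted to monomials in which $x_i$ or $x_{i+1}$ has exponent zero, so it suffices to exhibit, for each $i$, a weight-preserving bijection between the contributing solid partitions with $(c_i,c_{i+1})=(a,0)$ and those with $(c_i,c_{i+1})=(0,a)$. The genuine gap is that this bijection---the entire content of the theorem---is never constructed; your step 3 describes what the map should do, not what it is, and you say so yourself (``the main obstacle''). Two concrete problems: first, in the model actually used in \eqref{g3d} the exponent of $x_i$ is $c_i(\pi)=|\{(j,k,\ell) : (i,j,k,\ell)\in\mathrm{Cor}(\pi)\}|$, the number of corners at first-coordinate level $i$, not a count of ``lines containing the value $i$'', so ``toggle the free occurrences of $i$ and $i+1$'' has no defined meaning in this corner model. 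Second, any candidate $\phi_i$ must be shown to preserve not only $\mathrm{wt}_y$, $\mathrm{wt}_z$ and the remaining $x$-exponents, but also the defining constraint $\mathrm{sh}_1(\pi)=\rho$ (the entire first slice of the array), which your sketch never addresses; the claim that monotonicity is ``guaranteed'' to survive the toggle exactly when one of the two indices is absent is an assertion of precisely the statement to be proved. (A side issue: the failure of one particular toggle when both indices occur would not by itself witness non-symmetry; that is established by explicit computation, as in Example~\ref{notsym}.)

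The paper fills this gap by working on the other side of the bijection $\Phi$ of Theorem~\ref{bij}. By Lemma~\ref{wei}, if $A=\Phi^{-1}(\pi)$ then the exponent of $x_i$ in $w(\pi)$ equals $|A^{(i)}|$, the entry-sum of the $i$-th slice of $A$. On matrices the desired bijection is trivial: relocate the nonzero slices from positions $\ell_1<\cdots<\ell_k$ to positions $j_1<\cdots<j_k$ (in particular, no reduction to adjacent transpositions is needed---arbitrary index sets are handled at once). The one nontrivial point is that the image $\pi'=\Phi(B)$ of the relocated matrix $B$ still satisfies $\mathrm{sh}_1(\pi')=\rho$, and this is exactly where the last-passage description earns its keep: each entry $\pi_{1,i_2,\ldots,i_d}$ is a maximum of directed path sums, zero slices contribute nothing to any path, so these maxima depend only on the ordered sequence of nonzero slices, which the relocation preserves. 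If you transport your intended degenerate toggle through $\Phi$ you recover exactly this slice relocation (for adjacent indices with $c_{i+1}(\pi)=0$, one checks it replaces slice $i+1$ of the array $\pi$ by a copy of slice $i$); so your plan is rescuable, but only by importing the matrix bijection that your proposal does not use, or else by carrying out on solid partitions the well-definedness and weight checks that you have left undone.
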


\subsection{Last passage percolation in $\mathbb{Z}^{d}$} It turns out that these problems  are closely related to the {\it directed last passage percolation model} with geometric weights in $\mathbb{Z}^d$ (see \cite{martin} for a survey on this probabilistic model). We prove that $d$-dimensional Grothendieck polynomials naturally compute distribution formulas for this model (see Theorem~\ref{lpp}). See Sec.~\ref{sec:lpp} for details.

\section{Preliminary definitions}\label{sec:prelim}

We use the following basic notation: $\mathbb{N}$ is the set of nonnegative integers; $\mathbb{Z}_+$ is the set of positive integers; $\{\mathbf{e}_1, \ldots, \mathbf{e}_d\}$ is the standard basis of $\mathbb{Z}^d$; and $[n] := \{1, \ldots, n \}$.

\vspace{0.5em}

A {\it $d$-dimensional $\mathbb{N}$-matrix} is an array $\left(a_{i_1, \ldots, i_d}\right)_{i_1, \ldots, i_d \ge 1}$ of nonnegative integers 
with only finitely many nonzero elements. 
A {\it $d$-dimensional partition} is a $d$-dimensional $\mathbb{N}$-matrix 
$\left(\pi_{i_1, \ldots, i_d} \right)$ such that 
$$
\pi_{i_1, \ldots, i_d} \ge \pi_{j_1, \ldots, j_d} ~\text{ for }~ i_1 \le j_1, \ldots, i_d \le j_d.
$$
Let $\mathcal{M}^{(d)}$ be the set of $d$-dimensional $\mathbb{N}$-matrices and $\mathcal{P}^{(d)}$ be the set of $d$-dimensional partitions. 
For $\pi = (\pi_{i_1, \ldots, i_d}) \in \mathcal{P}^{(d)}$, the {\it volume} (or size) of $\pi$ denoted by $|\pi|$ is defined as 
$$
|\pi| = \sum_{i_1, \ldots, i_d} \pi_{i_1, \ldots, i_d}.
$$
Any partition $\pi$ is uniquely determined by its {\it diagram} $D(\pi)$ which is the set 
$$D(\pi) := \{(i_1, \ldots, i_d, i) \in \mathbb{Z}^{d+1}_+: 1 \le i \le \pi_{i_1, \ldots, i_d} \}. 
$$
The {\it shape} of $\pi$ denoted by $\mathrm{sh}(\pi)$ is the set 
$$\mathrm{sh}(\pi) := \{(i_1, \ldots, i_d) \in \mathbb{Z}^d_+  : \pi_{i_1, \ldots, i_d} > 0\}.$$  
Note that $\mathrm{sh}(\pi)$ is a diagram of some $(d-1)$-dimensional partition. 
Let $$\mathcal{M}(n_1,\ldots, n_d) = \{ (a_{\mathbf{i}})_{} : a_{\mathbf{i}} \in \mathbb{N}, \mathbf{i} \in [n_1] \times \cdots \times [n_d] 
\}$$ 
be the set of $[n_1] \times \cdots \times [n_d]$ $\mathbb{N}$-matrices 
and  
$$\mathcal{P}(n_1,\ldots, n_{d+1}) := \{ \pi \in \mathcal{P}^{(d)} : D(\pi) \subseteq [n_1] \times \cdots \times [n_{d+1}]\}$$
be the set of {\it boxed} $d$-dimensional partitions. 

For $d = 2,3$ partitions are called {\it plane partitions} and {\it solid partitions}.\footnote{In some literature, there is a +1 shift in dimensions, when partitions are associated with their diagrams. }

\section{A bijection between $d$-dimensional $\mathbb{N}$-matrices and partitions}\label{sec:bij}
\subsection{Last passage matrix} A lattice path in $\mathbb{Z}^d$ is called {\it directed} if it uses only steps of the form $\mathbf{i} \to \mathbf{i} + \mathbf{e}_\ell$ for $\mathbf{i} \in \mathbb{Z}^d$ and $\ell \in [d]$. 
Given a  $d$-dimensional $\mathbb{N}$-matrix $A = (a_{i_1, \ldots, i_d})$, 
define the {\it last passage times} \footnote{ We use terminology related to probabilistic model of last passage percolation, see Sec.~\ref{sec:lpp}.} 
\begin{align*}
G_{i_1, \ldots, i_d} := \max_{\Pi\, :\, (i_1, \ldots, i_d) \to \infty^{d}} \sum_{(j_1, \ldots, j_d) \in \Pi} a_{j_1, \ldots, j_d},
\end{align*}
where the maximum is over directed lattice paths $\Pi$ which start at $(i_1, \ldots, i_d) \in \mathbb{Z}^d_+$. It is easy to see that the following recurrence relation holds
\begin{align}\label{rec}
G_{\mathbf{i}} = a_{\mathbf{i}} + \max_{\ell \in [d]} G_{\mathbf{i} + \mathbf{e}_\ell}, \quad \mathbf{i} \in \mathbb{Z}^d_+.
\end{align}
Notice that the matrix $G  = (G_{\mathbf{i}})_{\mathbf{i} \in \mathbb{Z}^d_+} \in \mathcal{P}^{(d)}$ is a $d$-dimensional partition.

\subsection{The bijection} Define the map $\Phi : \mathcal{M}^{(d)} \to \mathcal{P}^{(d)}$ as follows 
\begin{align}
\Phi : A \longmapsto G
\end{align}
Let $\rho \subset \mathbb{Z}^d_+$ be a shape of some $d$-dimensional partition (or a diagram of a $(d-1)$-dimensional partition). Let 
\begin{align*}
\mathcal{P}(\rho, n) := \{ \pi \in \mathcal{P}^{(d)} : \mathrm{sh}(\pi) \subseteq \rho,\, \pi_{1, \ldots, 1} \le n\}
\end{align*} 
be the set of $d$-dimensional partitions whose shape is a subset of $\rho$ 
and the largest entry 
is at most $n$. 
Let 
\begin{align*}
\mathcal{M}(\rho, n) := \{A = (a_{\mathbf{i}}) \in \mathcal{M}^{(d)} : a_\mathbf{i} > 0 \implies  \mathbf{i} \in \rho,\, G_{1,\ldots, 1} \le n\}
\end{align*}
be the set of $d$-dimensional $\mathbb{N}$-matrices whose support (i.e. the set of indices corresponding to positive entries) lies inside $\rho$ and the largest last passage time is at most $n$.  

\begin{theorem}\label{bij}
The map $\Phi$ defines a bijection between the sets $\mathcal{M}(\rho, n)$ and $\mathcal{P}(\rho, n)$. 
\end{theorem}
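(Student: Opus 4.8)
The plan is to exhibit an explicit inverse map $\Psi:\mathcal{P}^{(d)}\to\mathcal{M}^{(d)}$ obtained by solving the recurrence \eqref{rec} for the entry $a_{\mathbf{i}}$. Given a $d$-dimensional partition $G=(G_{\mathbf{i}})$, I set
$$
\Psi(G) = A = (a_{\mathbf{i}}), \qquad a_{\mathbf{i}} := G_{\mathbf{i}} - \max_{\ell\in[d]} G_{\mathbf{i}+\mathbf{e}_\ell}.
$$
First I would check that $\Psi$ lands in $\mathcal{M}^{(d)}$: since $G$ is a $d$-dimensional partition we have $G_{\mathbf{i}}\ge G_{\mathbf{i}+\mathbf{e}_\ell}$ for every $\ell$, so $a_{\mathbf{i}}\ge 0$; and if $G_{\mathbf{i}}=0$ then $G_{\mathbf{i}+\mathbf{e}_\ell}=0$ for all $\ell$, whence $a_{\mathbf{i}}=0$, so that $\mathrm{supp}(A)\subseteq\mathrm{sh}(G)$ is finite. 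Thus $\Psi$ is well defined.

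The two composition identities come next. That $\Psi\circ\Phi=\mathrm{id}_{\mathcal{M}^{(d)}}$ is immediate: applying $\Psi$ to $G=\Phi(A)$ and invoking the recurrence \eqref{rec} returns $a_{\mathbf{i}} = G_{\mathbf{i}} - \max_\ell G_{\mathbf{i}+\mathbf{e}_\ell}$. For $\Phi\circ\Psi=\mathrm{id}_{\mathcal{P}^{(d)}}$ I would argue by telescoping. Fix $G\in\mathcal{P}^{(d)}$, let $A=\Psi(G)$, and set $G'=\Phi(A)$. Along any directed path $\Pi: \mathbf{i}=\mathbf{p}^0\to\mathbf{p}^1\to\cdots$, each step $\mathbf{p}^{k+1}=\mathbf{p}^k+\mathbf{e}_{\ell_k}$ satisfies $G_{\mathbf{p}^{k+1}}\le \max_\ell G_{\mathbf{p}^k+\mathbf{e}_\ell}$, so $a_{\mathbf{p}^k}\le G_{\mathbf{p}^k}-G_{\mathbf{p}^{k+1}}$; since $G$ has finite support the path eventually leaves it, and the sum telescopes to $\sum_{k} a_{\mathbf{p}^k}\le G_{\mathbf{i}}$, hence $G'_{\mathbf{i}}\le G_{\mathbf{i}}$. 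Equality is realized by the greedy path that always steps in a direction attaining $\max_\ell G_{\mathbf{p}^k+\mathbf{e}_\ell}$, for which every inequality above becomes an equality. This shows $G'=G$ and establishes that $\Phi$ is a bijection $\mathcal{M}^{(d)}\to\mathcal{P}^{(d)}$.

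It then remains to match the constraints defining $\mathcal{M}(\rho,n)$ and $\mathcal{P}(\rho,n)$; I would prove $A\in\mathcal{M}(\rho,n)$ iff $\Phi(A)\in\mathcal{P}(\rho,n)$. The largest-entry condition is automatic: since $G$ is a $d$-dimensional partition its maximum is attained at $(1,\ldots,1)$, so $G_{1,\ldots,1}\le n$ is literally the condition $\pi_{1,\ldots,1}\le n$ with $\pi=G$. For the support condition, the key observation is that a directed path from $\mathbf{i}$ reaches exactly the points $\mathbf{j}\ge\mathbf{i}$ (coordinatewise), so $G_{\mathbf{i}}>0$ iff $a_{\mathbf{j}}>0$ for some $\mathbf{j}\ge\mathbf{i}$; that is, $\mathrm{sh}(\Phi(A))$ equals the order ideal of $\mathbb{Z}^d_+$ generated by $\mathrm{supp}(A)$. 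Because $\rho$ is the shape of a $d$-dimensional partition it is down-closed, so $\mathrm{supp}(A)\subseteq\rho$ and $\mathrm{sh}(\Phi(A))\subseteq\rho$ are equivalent. Combining these two equivalences with the global bijection of the previous step yields the desired bijection between $\mathcal{M}(\rho,n)$ and $\mathcal{P}(\rho,n)$.

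I expect the only delicate point to be the direction $\Phi\circ\Psi=\mathrm{id}$, namely verifying that the last-passage times of the reconstructed matrix recover $G$; the telescoping estimate together with the greedy optimal path handles this cleanly, with the finiteness of $\mathrm{supp}(G)$ ensuring that every path terminates in the zero region so that no convergence issues arise.
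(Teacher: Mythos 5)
Your proof is correct, and it follows the same overall strategy as the paper -- both exhibit the explicit inverse $a_{\mathbf{i}} = \pi_{\mathbf{i}} - \max_{\ell\in[d]} \pi_{\mathbf{i}+\mathbf{e}_\ell}$ (the paper's eq.~\eqref{rec1}) -- but the key verification that $\Phi\circ\Psi=\mathrm{id}$ is done by a genuinely different argument. The paper restricts attention to the finite acyclic directed graph $\Gamma$ on the vertex set $\rho$, takes a topological (linear) ordering of its vertices, and proves $G_{\mathbf{i}}=\pi_{\mathbf{i}}$ by downward induction along that order, so the whole argument stays inside the constrained sets $\mathcal{M}(\rho,n)$ and $\mathcal{P}(\rho,n)$. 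You instead prove the unconstrained bijection $\mathcal{M}^{(d)}\leftrightarrow\mathcal{P}^{(d)}$ first, via a two-sided path estimate: the telescoping bound $\sum_k a_{\mathbf{p}^k}\le G_{\mathbf{i}}$ valid for every directed path, and a greedy path (always stepping toward a maximizer of $G_{\mathbf{p}^k+\mathbf{e}_\ell}$) that attains equality; you then match the defining constraints separately. Your route is more in the spirit of last-passage percolation and yields a slightly stronger structural byproduct that the paper does not state explicitly, namely that $\mathrm{sh}(\Phi(A))$ is exactly the order ideal of $\mathbb{Z}^d_+$ generated by $\mathrm{supp}(A)$ (from which the support equivalence follows since $\rho$ is down-closed); it also delivers the paper's Corollary (iv) immediately and cleanly separates the bijection from the constraint bookkeeping. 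The paper's induction, on the other hand, is purely finite and local to $\rho$, so it never needs to discuss infinite paths or argue that paths eventually exit the support. Both arguments are complete; your handling of the potential convergence issue (entries vanish once $G$ does, so all sums are effectively finite) is adequate.
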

\begin{proof}
Let $A = (a_{\mathbf{i}}) \in \mathcal{M}(\rho, n)$. By construction of the map, it is not difficult to see that $\pi = \Phi(A) \in \mathcal{P}(\rho, n)$. Indeed, we have the largest last passage time $\pi_{1,\ldots, 1} \le n$, and $\mathrm{sh}(\pi) \subseteq \rho$ since if $a_{\mathbf{i}} > 0$ then $\mathbf{i} \in \rho$. 

Conversely, given $\pi \in \mathcal{P}(\rho, n)$, to reconstruct the inverse map $\Phi^{-1}$, using the recurrence \eqref{rec} we define the matrix $A = (a_{\mathbf{i}})$ given by
\begin{align}\label{rec1}
a_{\mathbf{i}} = \pi_{\mathbf{i}} - \max_{\ell \in [d]} \pi_{\mathbf{i} + \mathbf{e}_\ell} \ge 0, \quad \mathbf{i} \in \mathbb{Z}^d_+.
\end{align}
Let $G = (G_{\mathbf{i}}) = \Phi(A)$. Let us check that $G = \pi$ and $A \in \mathcal{M}(\rho, n)$. Since $\mathrm{sh}(\pi) \subseteq \rho$ we have $a_{\mathbf{i}} = 0$ for all $\mathbf{i} \not\in \rho$ (in particular, $A \in \mathcal{M}(\rho, \infty)$). Hence $G_{\mathbf{i}} = \pi_{\mathbf{i}} = 0$ for all $\mathbf{i} \not\in \rho$. Consider the directed graph $\Gamma$ on the vertex set $\mathbf{i} \in \rho$ and edges $\mathbf{i} \to \mathbf{i} + \mathbf{e}_{\ell}$ (when $\mathbf{i} + \mathbf{e}_{\ell} \in \rho$) for $\ell \in [d]$. Then $\Gamma$ is acyclic (i.e. has no directed cycles). Notice that $a_{\mathbf{i}} = \pi_{\mathbf{i}} = G_{\mathbf{i}}$ if a vertex $\mathbf{i} \in \Gamma$ has no outgoing edges. Since $\Gamma$ is acyclic, we can sort its vertices in linear order $(\mathbf{i}^{(1)}, \ldots, \mathbf{i}^{(m)})$ so that the edges go only in one direction $\mathbf{i}^{(\ell)} \to \mathbf{i}^{(k)}$ for $\ell < k$. We already noticed that $\pi_{\mathbf{i}^{(m)}} = G_{\mathbf{i}^{(m)}}$. Then inductively on $\ell = m-1,\ldots, 1$ we have 
$$
\pi_{\mathbf{i}^{(\ell)}} = a_{\mathbf{i}^{(\ell)}} + \max_{\mathbf{i}^{(\ell)} \to \mathbf{i}^{(k)}} \pi_{\mathbf{i}^{(k)}} = a_{\mathbf{i}^{(\ell)}} + \max_{\mathbf{i}^{(\ell)} \to \mathbf{i}^{(k)}} G_{\mathbf{i}^{(k)}} = G_{\mathbf{i}^{(\ell)}}.
$$
Therefore, $\pi = G$. In particular, $G_{1,\ldots, 1} \le n$ and hence $A \in \mathcal{M}(\rho, n)$. 
\end{proof}

\begin{corollary}
The map $\Phi$ defines a bijection between each of the following pairs of sets: 
\begin{itemize}
\item[(i)] $\mathcal{M}([n_1]\times \cdots \times [n_d], n_{d+1})$  and $\mathcal{P}(n_1,\ldots, n_{d+1})$

\item[(ii)] $\mathcal{M}(n_1, \ldots, n_d)$  and $\mathcal{P}(n_1,\ldots, n_{d}, \infty)$

\item[(iii)] $\mathcal{M}(\rho, \infty)$ and $\mathcal{P}(\rho, \infty)$ 

\item[(iv)] $\mathcal{M}^{(d)}$ and $\mathcal{P}^{(d)}$.
\end{itemize}
\end{corollary}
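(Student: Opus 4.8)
The plan is to derive all four bijections as specializations of Theorem~\ref{bij}, which already establishes that $\Phi$ is a bijection $\mathcal{M}(\rho, n) \to \mathcal{P}(\rho, n)$ for every finite shape $\rho$ and every bound $n$. The only real work is to match the set definitions in (i)--(iv) against $\mathcal{M}(\rho,n)$ and $\mathcal{P}(\rho,n)$, together with a mild limiting argument to reach the unbounded cases.

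First I would dispose of part (iii), which is literally Theorem~\ref{bij} with $n = \infty$: setting $n = \infty$ simply makes the constraints $\pi_{1,\ldots,1} \le n$ and $G_{1,\ldots,1} \le n$ vacuous, so $\mathcal{M}(\rho,\infty)$ and $\mathcal{P}(\rho,\infty)$ are exactly the sets appearing in the theorem and its proof goes through verbatim (indeed the proof already passes through $\mathcal{M}(\rho,\infty)$).

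Next, for part (i) I would take $\rho = [n_1] \times \cdots \times [n_d]$ and $n = n_{d+1}$ and verify the set identity $\mathcal{P}(n_1,\ldots,n_{d+1}) = \mathcal{P}(\rho, n_{d+1})$. Unwinding the definition of the diagram, $D(\pi) \subseteq [n_1]\times\cdots\times[n_{d+1}]$ says that whenever $\pi_{i_1,\ldots,i_d} > 0$ one has $(i_1,\ldots,i_d) \in \rho$, i.e. $\mathrm{sh}(\pi) \subseteq \rho$, and that $\pi_{i_1,\ldots,i_d} \le n_{d+1}$ for all indices. Because $\pi$ is a partition its entries are maximized at $\pi_{1,\ldots,1}$, so this last family of inequalities collapses to the single condition $\pi_{1,\ldots,1} \le n_{d+1}$; hence the two sets coincide, and the matrix side $\mathcal{M}([n_1]\times\cdots\times[n_d], n_{d+1})$ matches the definition of $\mathcal{M}(\rho, n_{d+1})$ directly, so (i) follows. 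Part (ii) is then the case $n_{d+1} = \infty$ of (i), once I observe that $\mathcal{M}(n_1,\ldots,n_d)$ — the matrices supported in the box $[n_1]\times\cdots\times[n_d]$ — is precisely $\mathcal{M}([n_1]\times\cdots\times[n_d], \infty)$.

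Finally, for part (iv) I would pass to the union over all finite shapes. The key structural observation is that the formulas defining $\Phi$ and its inverse \eqref{rec1} depend only on $A$ (resp. $\pi$) and not on the ambient $\rho$, so for nested shapes $\rho \subseteq \rho'$ the bijections from (iii) agree on the overlap $\mathcal{M}(\rho,\infty)$. Since every $A \in \mathcal{M}^{(d)}$ has finite support it lies in $\mathcal{M}(\rho,\infty)$ for some finite box $\rho$, and likewise every $\pi \in \mathcal{P}^{(d)}$ has finite shape and lies in $\mathcal{P}(\rho,\infty)$; writing $\mathcal{M}^{(d)} = \bigcup_\rho \mathcal{M}(\rho,\infty)$ and $\mathcal{P}^{(d)} = \bigcup_\rho \mathcal{P}(\rho,\infty)$, the compatible bijections glue to a single bijection $\mathcal{M}^{(d)} \to \mathcal{P}^{(d)}$. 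I expect the main (though still minor) obstacle to be exactly this gluing step: checking injectivity and surjectivity globally by choosing, for any given data, a finite $\rho$ large enough to contain the relevant supports and then invoking Theorem~\ref{bij} on that $\rho$.
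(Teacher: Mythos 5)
Your proposal is correct and is exactly the argument the paper intends: the corollary is stated without proof as an immediate specialization of Theorem~\ref{bij}, and your choices of $\rho$ and $n$ (the box with $n = n_{d+1}$ for (i), $n=\infty$ for (ii)--(iii), and the union over finite shapes for (iv)) fill in the routine details faithfully. The set identifications you verify (collapsing $D(\pi)\subseteq[n_1]\times\cdots\times[n_{d+1}]$ to $\mathrm{sh}(\pi)\subseteq\rho$ and $\pi_{1,\ldots,1}\le n_{d+1}$, and the gluing for (iv), which is unproblematic since $\Phi$ is defined globally on $\mathcal{M}^{(d)}$ independently of $\rho$) are all sound.
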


\begin{remark}
The item (i) above states that the set of boxed $d$-dimensional partitions with diagrams inside the box $[n_1] \times \cdots \times [n_{d+1}]$ is equal to the number of $[n_1]\times \cdots \times [n_d]$ $\mathbb{N}$-matrices whose largest last passage time is at most $n_{d+1}$. 
\end{remark}


\begin{remark}
For $d = 2$, the map $\Phi$ gives a bijection between $\mathbb{N}$-matrices and plane partitions. This bijection is essentially equivalent (up to diagram rotations) to the one studied in \cite{dy4, dy5}. 
Note that one can construct $d$-dimensional partitions $G$ dynamically using an insertion type procedure as in RSK. Note also that similar largest path (last passage time) properties hold for RSK  as well, see \cite{pak, sagan}. 
\end{remark}

\section{Multivariate identities}\label{sec:mac}

\subsection{Corners} Given a partition $\pi \in \mathcal{P}^{(d)}$, define the set of {\it corners} as follows
$$
\mathrm{Cor}(\pi) := \{\mathbf{i} \in \mathbb{Z}^{d+1}_+ : \mathbf{i} \in D(\pi),\, \mathbf{i} + \mathbf{e}_{\ell} \not\in D(\pi) \text{ for all } \ell \in [d] \}.
$$
(Here $\{e_{\ell}\}$ is the standard basis in $\mathbb{Z}^{d+1}$.) Let $\mathrm{cor}(\pi) := |\mathrm{Cor}(\pi)|$ be the number of corners of $\pi$. Define also the set of {\it top corners} as follows
$$
\mathrm{Cr}(\pi) := \{\mathbf{i} \in \mathbb{Z}^{d+1}_+ : \mathbf{i} \in D(\pi),\, \mathbf{i} + \mathbf{e}_{\ell} \not\in D(\pi) \text{ for all } \ell \in [d + 1] \}\, \subseteq\, \mathrm{Cor}(\pi).
$$
Let $\mathrm{cr}(\pi) := |\mathrm{Cr}(\pi)|$ be the number of top corners of $\pi$. Note that the set of corners $\mathrm{Cr}(\pi)$ uniquely determines the partition $\pi$. 

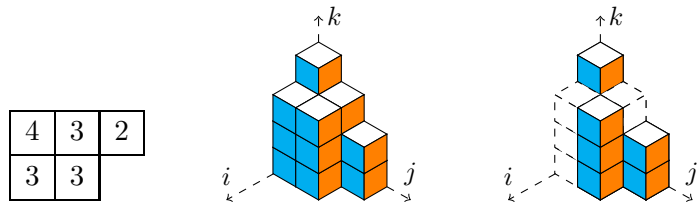
\begin{figure}
\ytableausetup{aligntableaux = bottom}
\begin{ytableau}
 4 & {3} & {2} \\
 {3} & 3  \\  
\end{ytableau}
\qquad
\begin{tikzpicture}[scale = 0.35]
\planepartition{
{4,3,2},
{3,3}}

\draw[->, dashed] (0,4) -- (0,5);
\node at (0.6,5) {\small $k$};

\draw[->, dashed] (2.6,-1.5) -- (3.5,-2);
\node at (3.5, -1) {\small $j$};

\draw[->, dashed] (-1.7,-1.0) -- (-3.5,-2);
\node at (-3.5, -1.1) {\small $i$};

\end{tikzpicture}
\qquad
\begin{tikzpicture}[scale = 0.35]

\planepartition{
{4,0,2},
{0,3}}

\pplanepartition{
{3,3,0},
{0,0}}

\planepartition{
{4,0,2},
{0,3}}

\pplanepartition{
{0,3,0},
{3,0}}

\planepartition{
{0,0,2},
{0,3}}

\draw[->, dashed] (0,4) -- (0,5);
\node at (0.6,5) {\small $k$};

\draw[->, dashed] (2.6,-1.5) -- (3.5,-2);
\node at (3.5, -1) {\small $j$};

\draw[->, dashed] (-1.7,-1.0) -- (-3.5,-2);
\node at (-3.5, -1.1) {\small $i$};

\end{tikzpicture}
\caption{A plane partition $\pi \in \mathcal{P}^{(2)}$ whose $\mathrm{sh}(\pi)$ corresponds to the partition $(3,2)$; its boxed diagram presentation as a pile of cubes in $\mathbb{R}^3$; and boxes of this diagram which correspond to corners. 
}\label{fig0}
\end{figure}

\begin{example}
Let $d = 2$ and $\pi$ be the plane partition given in Fig.~\ref{fig0}. We then have 
\begin{align*}
\mathrm{Cor}(\pi) &= \{(i,j,k) \in D(\pi) : (i+1,j,k), (i,j+1,k) \not\in D(\pi) \} \\
&= \{(1,1,4), (1,3,1), (1,3,2), (2,2,1), (2,2,2), (2,2,3) \}\\
\mathrm{Cr}(\pi) &= \{(i,j,k) \in D(\pi) : (i+1,j,k), (i,j+1,k),(i,j,k+1) \not\in D(\pi) \} \\
&= \{(1,1,4), (1,3,2), (2,2,3)\}
\end{align*}
where corners in Fig.~\ref{fig0} correspond to local configurations 
\begin{tikzpicture}[scale = 0.2]
\cccube{0}{0}{0}
\end{tikzpicture}
and top corners correspond to the configurations
\begin{tikzpicture}[scale = 0.2]
\planepartition{
{1}}
\end{tikzpicture}.
\end{example}


\subsection{Main formulas} For each $i \in [d]$, let $\mathbf{x}^{(i)} = (x^{(i)}_1, x^{(i)}_2, \ldots)$ be a countable set of indeterminate variables. 

\begin{theorem}\label{mult} 
Let $\rho \subset \mathbb{Z}^d_+$ be a fixed shape of a $d$-dimensional partition.  
We have the following multivariate generating function identities
{
\begin{align}
\sum_{{\pi \in \mathcal{P}^{(d)}, \atop \, \mathrm{sh}(\pi) \subseteq \rho} }\, \prod_{(i_1,\ldots, i_{d+1}) \in \mathrm{Cor}(\pi)} x^{(1)}_{i_1} \cdots x^{(d)}_{i_d} &= \prod_{(i_1,\ldots, i_d) \in \rho} {\left(1 - x^{(1)}_{i_1} \cdots x^{(d)}_{i_d} \right)^{-1}} \label{m1}\\
\sum_{ {\pi \in \mathcal{P}^{(d)},\atop \, \mathrm{sh}(\pi) = \rho} }\, \prod_{(i_1,\ldots, i_{d+1}) \in \mathrm{Cor}(\pi)} x^{(1)}_{i_1} \cdots x^{(d)}_{i_d} &=  \prod_{(i_1,\ldots, i_d) \in \mathrm{Cr}(\rho)} x^{(1)}_{i_1} \cdots x^{(d)}_{i_d} \prod_{(i_1,\ldots, i_d) \in \rho} {\left(1 - x^{(1)}_{i_1} \cdots x^{(d)}_{i_d}\right)^{-1}} 
 \label{m2}
\end{align}
}
\end{theorem}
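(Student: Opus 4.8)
The plan is to push everything through the bijection $\Phi$ of Theorem~\ref{bij} and reduce the corner-weighted sum over partitions to a sum over $\mathbb{N}$-matrices that factors as a product of geometric series. Abbreviate $x_{\mathbf{i}} := x^{(1)}_{i_1} \cdots x^{(d)}_{i_d}$ for $\mathbf{i} = (i_1, \ldots, i_d)$. The central observation is a local reading of corners in terms of the matrix entries. Fix $\pi \in \mathcal{P}^{(d)}$ and let $A = (a_{\mathbf{i}}) = \Phi^{-1}(\pi)$, so that $a_{\mathbf{i}} = \pi_{\mathbf{i}} - \max_{\ell \in [d]} \pi_{\mathbf{i}+\mathbf{e}_\ell}$ by \eqref{rec1}. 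A point $(i_1, \ldots, i_d, i) \in D(\pi)$ lies in $\mathrm{Cor}(\pi)$ precisely when $\max_{\ell \in [d]} \pi_{\mathbf{i} + \mathbf{e}_\ell} < i \le \pi_{\mathbf{i}}$, so the number of corners of $\pi$ sitting above the base $\mathbf{i}$ equals exactly $a_{\mathbf{i}}$. Since every such corner contributes the same weight $x_{\mathbf{i}}$, the corner weight of $\pi$ collapses to a matrix weight,
$$
\prod_{(i_1, \ldots, i_{d+1}) \in \mathrm{Cor}(\pi)} x^{(1)}_{i_1}\cdots x^{(d)}_{i_d} = \prod_{\mathbf{i}} x_{\mathbf{i}}^{\,a_{\mathbf{i}}}.
$$

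I would then prove \eqref{m1} directly. By Theorem~\ref{bij} with $n = \infty$, the map $\Phi$ identifies $\{\pi : \mathrm{sh}(\pi) \subseteq \rho\}$ with the $\mathbb{N}$-matrices whose support lies in $\rho$, i.e. with arbitrary tuples $(a_{\mathbf{i}})_{\mathbf{i} \in \rho}$. Summing the displayed monomial over all such matrices factors over the independent coordinates $\mathbf{i} \in \rho$, and each factor is a geometric series $\sum_{a \ge 0} x_{\mathbf{i}}^{\,a} = (1 - x_{\mathbf{i}})^{-1}$, which yields the right-hand side of \eqref{m1}.

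For \eqref{m2} the only extra ingredient is to translate the exact-shape condition $\mathrm{sh}(\pi) = \rho$ into a condition on $A$. Here I would use that $\rho$ is an order ideal of $(\mathbb{Z}^d_+, \le)$, being the diagram of a $(d-1)$-dimensional partition, whose maximal elements are precisely $\mathrm{Cr}(\rho)$. For a maximal $\mathbf{j} \in \mathrm{Cr}(\rho)$ one has $\mathbf{j} + \mathbf{e}_\ell \notin \rho$ for all $\ell$, hence $\pi_{\mathbf{j}} = G_{\mathbf{j}} = a_{\mathbf{j}}$ by the recurrence \eqref{rec}; thus $\mathrm{sh}(\pi) = \rho$ forces $a_{\mathbf{j}} \ge 1$ for every $\mathbf{j} \in \mathrm{Cr}(\rho)$. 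Conversely, if $a_{\mathbf{j}} \ge 1$ at every maximal $\mathbf{j}$, then from any $\mathbf{i} \in \rho$ a monotone directed path inside $\rho$ reaches some maximal $\mathbf{j}$, so $\pi_{\mathbf{i}} = G_{\mathbf{i}} \ge a_{\mathbf{j}} > 0$ and $\mathrm{sh}(\pi) = \rho$. I would then rerun the factorization from \eqref{m1}, now summing $a_{\mathbf{j}} \ge 1$ at the coordinates $\mathbf{j} \in \mathrm{Cr}(\rho)$, each contributing $\sum_{a \ge 1} x_{\mathbf{j}}^{\,a} = x_{\mathbf{j}}(1 - x_{\mathbf{j}})^{-1}$, and $a_{\mathbf{i}} \ge 0$ elsewhere; pulling the factors $x_{\mathbf{j}}$ out front gives exactly \eqref{m2}.

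The main obstacle I anticipate is nailing down the exact-shape characterization cleanly: one must verify that positivity of $A$ at the top corners of $\rho$ is both necessary and sufficient for every last passage time $G_{\mathbf{i}}$ with $\mathbf{i} \in \rho$ to be positive. This rests on $\rho$ being downward closed, so that a monotone path inside $\rho$ always terminates at a maximal element, and that is the step requiring care. The corner-counting identity itself, once stated, is immediate from the inverse formula \eqref{rec1}, and the remaining manipulations are routine geometric-series computations.
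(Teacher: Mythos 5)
Your proposal is correct and follows essentially the same route as the paper: the corner-counting identity you state is exactly the paper's Lemma~\ref{wei} (weight preservation under $\Phi$), the exact-shape characterization via positivity of $a_{\mathbf{j}}$ at $\mathrm{Cr}(\rho)$ is the paper's Lemma~\ref{co}, and both proofs then reduce each identity to a factored sum of geometric series over matrices supported in $\rho$. Your directed-path argument for the sufficiency of positivity at top corners is a minor variant of the paper's (which instead uses the recurrence and monotonicity of $\pi$ directly), but the overall structure is identical.
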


It is convenient to define weights of matrices and partitions as follows. Given a matrix $A = (a_{i_1,\ldots,i_d})\in \mathcal{M}^{(d)}$, we associate to it a multivariable monomial weight 
$$w_A := \prod_{(i_1,\ldots,i_d) \in \mathbb{Z}^d_+} \left(x^{(1)}_{i_1} \cdots x^{(d)}_{i_d} \right)^{a_{i_1,\ldots,i_d}}.$$
Given a partition $\pi \in \mathcal{P}^{(d)}$, we associate to it a multivariable monomial weight
$$
w(\pi) := \prod_{(i_1,\ldots, i_{d+1}) \in \mathrm{Cor}(\pi)} x^{(1)}_{i_1} \cdots x^{(d)}_{i_d}.
$$
\begin{lemma}\label{wei}
Let $A = (a_{\mathbf{i}}) \in \mathcal{M}^{(d)}$ and $\pi = (\pi_{\mathbf{i}}) = \Phi(A) \in \mathcal{P}^{(d)}$. Then $w_A = w(\pi)$.
\end{lemma}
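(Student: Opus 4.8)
The plan is to compare the two weights monomial-by-monomial by grouping the corners of $\pi$ according to their first $d$ coordinates. For a base position $\mathbf{i} = (i_1, \ldots, i_d) \in \mathbb{Z}^d_+$ let $m_{\mathbf{i}}$ denote the number of corners $(i_1, \ldots, i_d, k) \in \mathrm{Cor}(\pi)$ whose first $d$ coordinates equal $\mathbf{i}$. Each such corner contributes exactly the monomial $x^{(1)}_{i_1} \cdots x^{(d)}_{i_d}$ to $w(\pi)$ (the vertical coordinate $k$ being irrelevant to the weight), so that $w(\pi) = \prod_{\mathbf{i}} \bigl(x^{(1)}_{i_1} \cdots x^{(d)}_{i_d}\bigr)^{m_{\mathbf{i}}}$. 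Comparing with the definition of $w_A$, it therefore suffices to prove that $m_{\mathbf{i}} = a_{\mathbf{i}}$ for every $\mathbf{i}$.

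First I would make the count $m_{\mathbf{i}}$ explicit. Unwinding the definition of $\mathrm{Cor}(\pi)$, a box $(i_1, \ldots, i_d, k)$ with base $\mathbf{i}$ is a corner precisely when $k \le \pi_{\mathbf{i}}$ (so that it lies in $D(\pi)$) and $k > \pi_{\mathbf{i} + \mathbf{e}_\ell}$ for every $\ell \in [d]$ (so that moving in each of the first $d$ directions leaves $D(\pi)$). Thus $k$ ranges exactly over the integers in the interval $(\max_{\ell \in [d]} \pi_{\mathbf{i} + \mathbf{e}_\ell},\, \pi_{\mathbf{i}}]$, giving
$$
m_{\mathbf{i}} = \pi_{\mathbf{i}} - \max_{\ell \in [d]} \pi_{\mathbf{i} + \mathbf{e}_\ell}.
$$

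Finally I would invoke the inverse map. Since $\pi = \Phi(A)$, the reconstruction formula \eqref{rec1} established in the proof of Theorem~\ref{bij} reads $a_{\mathbf{i}} = \pi_{\mathbf{i}} - \max_{\ell \in [d]} \pi_{\mathbf{i} + \mathbf{e}_\ell}$, which is identical to the expression just obtained for $m_{\mathbf{i}}$. Hence $m_{\mathbf{i}} = a_{\mathbf{i}}$ for all $\mathbf{i}$, and consequently $w(\pi) = w_A$.

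The argument involves no estimation or case analysis, so the only point requiring care is bookkeeping: the basis vectors in the definition of $\mathrm{Cor}(\pi)$ live in $\mathbb{Z}^{d+1}$ whereas those in \eqref{rec1} live in $\mathbb{Z}^d$, and one must use precisely the condition on the first $d$ directions (excluding the vertical direction $\mathbf{e}_{d+1}$, which would instead define the top corners $\mathrm{Cr}(\pi)$). Keeping these indexings aligned, the lemma reduces to a direct translation between \emph{the number of corners sitting above a cell} $\mathbf{i}$ and the matrix entry $a_{\mathbf{i}}$.
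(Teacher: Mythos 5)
Your proposal is correct and follows essentially the same route as the paper's own proof: both arguments reduce the lemma to the identity $a_{\mathbf{i}} = \pi_{\mathbf{i}} - \max_{\ell \in [d]} \pi_{\mathbf{i} + \mathbf{e}_\ell} = |\{k : (\mathbf{i},k) \in \mathrm{Cor}(\pi)\}|$, obtained by counting the corners above a fixed base cell $\mathbf{i}$ and invoking the recurrence defining $\Phi$, and then regroup the product defining $w(\pi)$ accordingly. Your write-up is, if anything, slightly more explicit than the paper's about the condition $k \le \pi_{\mathbf{i}}$ and about which basis vectors (in $\mathbb{Z}^d$ versus $\mathbb{Z}^{d+1}$) are in play.
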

\begin{proof}
Let us first show that 
$$
\pi_{\mathbf{i}} - \max_{\ell \in [d]} \pi_{\mathbf{i} + \mathbf{e}_{\ell}} = |\{i_{d+1} : (\mathbf{i}, i_{d+1}) \in \mathrm{Cor}(\pi) \}|, \quad \mathbf{i} \in \mathbb{Z}^{d}_+.
$$
Indeed, $(\mathbf{i}, i_{d+1}) \in \mathrm{Cor}(\pi)$ iff $i_{d+1} > \pi_{\mathbf{i} + \mathbf{e}_\ell}$ for all $\ell \in [d]$. 
From the description of $\Phi$ we then 
have the following equalities
$$a_{\mathbf{i}} = \pi_{\mathbf{i}} - \max_{\ell \in [d]} \pi_{\mathbf{i} + \mathbf{e}_{\ell}} = |\{i_{d+1} : (\mathbf{i}, i_{d+1}) \in \mathrm{Cor}(\pi) \}|, \quad \mathbf{i} = (i_1,\ldots,i_d) \in \mathbb{Z}^d_+.$$
Now we have 
\begin{align*}
w_A &= \prod_{(i_1,\ldots,i_d) \in \mathbb{Z}^d_+} \left(x^{(1)}_{i_1} \cdots x^{(d)}_{i_d} \right)^{a_{i_1,\ldots,i_d}} \\
&=\prod_{(i_1,\ldots,i_d) \in \mathbb{Z}^d_+} \left(x^{(1)}_{i_1} \cdots x^{(d)}_{i_d} \right)^{|\{i_{d+1} : (i_1,\ldots,i_d, i_{d+1}) \in \mathrm{Cor}(\pi) \}|} \\
&=\prod_{(i_1,\ldots, i_{d+1}) \in \mathrm{Cor}(\pi)} x^{(1)}_{i_1} \cdots x^{(d)}_{i_d} \\
&= w(\pi)
\end{align*}
which gives the needed. 
\end{proof}

\begin{lemma}\label{co}
Let $A = (a_{\mathbf{i}}) \in \mathcal{M}^{}(\rho, \infty)$ and $\pi = (\pi_{\mathbf{i}}) = \Phi(A) \in \mathcal{P}^{}(\rho,\infty)$. The following are equivalent:
\begin{itemize}
\item[(a)] $a_{\mathbf{i}} > 0$ for all $\mathbf{i} \in \mathrm{Cr}(\rho)$

\item[(b)] $\mathrm{sh}(\pi) = \rho$.
\end{itemize}
\end{lemma}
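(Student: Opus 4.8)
The plan is to translate the condition $\mathrm{sh}(\pi) = \rho$ into a pointwise positivity statement and then read off both implications from the defining recurrence \eqref{rec1}. Since $A \in \mathcal{M}(\rho,\infty)$, Theorem~\ref{bij} already guarantees $\mathrm{sh}(\pi) \subseteq \rho$; hence condition (b) is equivalent to the reverse inclusion $\rho \subseteq \mathrm{sh}(\pi)$, i.e. to $\pi_{\mathbf{i}} > 0$ for every $\mathbf{i} \in \rho$. Throughout I will use that $\pi = \Phi(A) = G$ satisfies the recurrence $\pi_{\mathbf{i}} = a_{\mathbf{i}} + \max_{\ell \in [d]} \pi_{\mathbf{i} + \mathbf{e}_\ell}$, that $\pi$ is a $d$-dimensional partition (so $\pi_{\mathbf{i}} \ge \pi_{\mathbf{j}}$ whenever $\mathbf{i} \le \mathbf{j}$ coordinatewise), and that $\mathrm{Cr}(\rho) = \{\mathbf{i} \in \rho : \mathbf{i} + \mathbf{e}_\ell \notin \rho \text{ for all } \ell \in [d]\}$ is precisely the set of coordinatewise-maximal cells of $\rho$.

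For the implication (b) $\Rightarrow$ (a), I would fix a top corner $\mathbf{i} \in \mathrm{Cr}(\rho)$. By maximality, $\mathbf{i} + \mathbf{e}_\ell \notin \rho$ for each $\ell \in [d]$, so $\pi_{\mathbf{i} + \mathbf{e}_\ell} = 0$ because $\mathrm{sh}(\pi) = \rho$. The recurrence then collapses, via \eqref{rec1}, to $a_{\mathbf{i}} = \pi_{\mathbf{i}} - \max_{\ell} \pi_{\mathbf{i} + \mathbf{e}_\ell} = \pi_{\mathbf{i}}$, and since $\mathbf{i} \in \rho = \mathrm{sh}(\pi)$ we have $\pi_{\mathbf{i}} > 0$; thus $a_{\mathbf{i}} > 0$, as required.

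For the implication (a) $\Rightarrow$ (b), I would show $\pi_{\mathbf{i}} > 0$ for every $\mathbf{i} \in \rho$. The key structural fact is that a shape $\rho$ is a down-closed order ideal of $(\mathbb{Z}^d_+, \le)$: if $\mathbf{j} \in \rho$ and $\mathbf{i} \le \mathbf{j}$ then $\mathbf{i} \in \rho$, since by definition $\rho$ is the diagram of a $(d-1)$-dimensional partition and such diagrams are down-closed. Consequently every $\mathbf{i} \in \rho$ lies weakly below some maximal cell $\mathbf{j} \in \mathrm{Cr}(\rho)$, obtained by repeatedly increasing coordinates inside the finite set $\rho$ until no further step is possible. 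For this $\mathbf{j}$ the computation of the previous paragraph gives $\pi_{\mathbf{j}} = a_{\mathbf{j}}$, which is positive by hypothesis (a). Monotonicity of $\pi$ then yields $\pi_{\mathbf{i}} \ge \pi_{\mathbf{j}} = a_{\mathbf{j}} > 0$, so $\mathbf{i} \in \mathrm{sh}(\pi)$. Combined with $\mathrm{sh}(\pi) \subseteq \rho$ this gives $\mathrm{sh}(\pi) = \rho$.

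The only step requiring care — and the one I regard as the crux — is the structural claim that $\mathrm{Cr}(\rho)$ consists of the maximal cells of $\rho$ and that every cell of $\rho$ sits weakly below one of them; everything else is a direct substitution into \eqref{rec1}. This claim is exactly the order-ideal property of shapes noted above. As an alternative to invoking monotonicity of $\pi$, one can bound the last passage time directly: a directed path from $\mathbf{i}$ that climbs up to the maximal cell $\mathbf{j}$ passes through $\mathbf{j}$, and all intermediate weights are nonnegative, so $G_{\mathbf{i}} \ge a_{\mathbf{j}} > 0$, giving the same conclusion.
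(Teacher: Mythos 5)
Your proof is correct and follows essentially the same route as the paper's: both directions reduce to evaluating the recurrence $a_{\mathbf{i}} = \pi_{\mathbf{i}} - \max_{\ell} \pi_{\mathbf{i}+\mathbf{e}_\ell}$ at top corners, where the neighboring values vanish. The only difference is that you spell out the order-ideal/maximality argument (every cell of $\rho$ lies below a top corner, plus monotonicity of $\pi$) that the paper compresses into the unexplained step ``Hence $\mathrm{sh}(\pi) = \rho$.''
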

\begin{proof}
Let $\mathbf{i} \in \mathrm{Cr}(\rho)$. 
Assume (a) holds. Since $A \in \mathcal{M}^{}(\rho, \infty)$ we have $a_{\mathbf{i} + \mathbf{e}_{\ell}} = 0$ for all $\ell \in [d]$.  Therefore, $\pi_{\mathbf{i}} = a_{\mathbf{i}} > 0$ and $\pi_{\mathbf{i} + \mathbf{e}_{\ell}} = 0$. Hence $\mathrm{sh}(\pi) = \rho$. 

Assume (b) holds. 
Then we have $\pi_{\mathbf{i} + \mathbf{e}_{\ell}} = 0$ for all $\ell \in [d]$. Therefore, $a_{\mathbf{i}} = \pi_{\mathbf{i}} > 0$.
\end{proof}

\begin{proof}[Proof of Theorem~\ref{mult}]
Firstly note that
\begin{align*}
\sum_{A = (a_{\mathbf{i}}) \in \mathcal{M}(\rho, \infty)} w_A
&= \sum_{A = (a_{\mathbf{i}}) \in \mathcal{M}(\rho, \infty)} \prod_{(i_1,\ldots,i_d) \in \rho} \left(x^{(1)}_{i_1} \cdots x^{(d)}_{i_d} \right)^{a_{i_1,\ldots,i_d}} \\
&= \prod_{(i_1,\ldots,i_d) \in \rho} \left(1 - x^{(1)}_{i_1} \cdots x^{(d)}_{i_d} \right)^{-1}.
\end{align*}
On the other hand, using Theorem~\ref{bij} and Lemma~\ref{wei} we have
$$
\sum_{A = (a_{\mathbf{i}}) \in \mathcal{M}(\rho, \infty)} w_A = \sum_{\pi \in \mathcal{P}(\rho, \infty)} w(\pi) = \sum_{\pi \in \mathcal{P}^{(d)},\, \mathrm{sh}(\pi) \subseteq \rho}\, \prod_{(i_1,\ldots, i_{d+1}) \in \mathrm{Cor}(\pi)} x^{(1)}_{i_1} \cdots x^{(d)}_{i_d}
$$ 
and hence the identity \eqref{m1} follows. 

Let $\overline{\mathcal{M}}(\rho, \infty) = \{A \in \mathcal{M}(\rho, \infty) : \mathbf{i} \in \mathrm{Cr}(\rho) \implies a_{\mathbf{i}} > 0\}$. 
Similarly, note that 
\begin{align*}
\sum_{A = (a_{\mathbf{i}}) \in \overline{\mathcal{M}}(\rho, \infty)} w_A
&= \sum_{A = (a_{\mathbf{i}}) \in \mathcal{M}(\rho, \infty)} \prod_{(i_1,\ldots, i_d) \in \mathrm{Cr}(\rho)} x^{(1)}_{i_1} \cdots x^{(d)}_{i_d} \prod_{(i_1,\ldots,i_d) \in \rho} \left(x^{(1)}_{i_1} \cdots x^{(d)}_{i_d} \right)^{a_{i_1,\ldots,i_d}} \\
&=  \prod_{(i_1,\ldots, i_d) \in \mathrm{Cr}(\rho)} x^{(1)}_{i_1} \cdots x^{(d)}_{i_d} \prod_{(i_1,\ldots,i_d) \in \rho} \left(1 - x^{(1)}_{i_1} \cdots x^{(d)}_{i_d} \right)^{-1}.
\end{align*}
On the other hand, using Lemma~\ref{co} we have 
$$
\sum_{A = (a_{\mathbf{i}}) \in \overline{\mathcal{M}}(\rho, \infty)} w_A = \sum_{\pi \in \mathcal{P}(\rho, \infty), \mathrm{sh}(\pi) = \rho} w(\pi) = \sum_{\pi \in \mathcal{P}^{(d)},\, \mathrm{sh}(\pi) = \rho}\, \prod_{(i_1,\ldots, i_{d+1}) \in \mathrm{Cor}(\pi)} x^{(1)}_{i_1} \cdots x^{(d)}_{i_d}
$$
and hence the identity \eqref{m2} follows.
\end{proof}

\subsection{Some special cases} Let us list few immediate special cases of the above formulas.

\begin{corollary}[Boxed case]\label{mboxed}
We have 
\begin{align*}
\sum_{\pi \in \mathcal{P}^{}(n_1,\ldots,n_d, \infty)}\, \prod_{(i_1,\ldots, i_{d+1}) \in \mathrm{Cor}(\pi)} x^{(1)}_{i_1} \cdots x^{(d)}_{i_d} &= \prod_{i_1 = 1}^{n_1} \cdots \prod_{i_d = 1}^{n_d} 
{\left(1 - x^{(1)}_{i_1} \cdots x^{(d)}_{i_d} \right)^{-1}}
\end{align*}
\end{corollary}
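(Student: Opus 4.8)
The plan is to derive this directly from Theorem~\ref{mult}, since the corollary is simply the instance of equation~\eqref{m1} in which the shape $\rho$ is taken to be the full rectangular box $[n_1] \times \cdots \times [n_d]$. The one point worth verifying is that the index set of the sum is the right one. By definition, $\pi \in \mathcal{P}(n_1,\ldots,n_d,\infty)$ means $D(\pi) \subseteq [n_1] \times \cdots \times [n_d] \times \mathbb{Z}_+$, and the column of $D(\pi)$ over a point $(i_1,\ldots,i_d)$ is nonempty precisely when $\pi_{i_1,\ldots,i_d} > 0$; hence this condition is equivalent to $\mathrm{sh}(\pi) \subseteq [n_1] \times \cdots \times [n_d]$, with no constraint on the heights (the unbounded last coordinate). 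So $\mathcal{P}(n_1,\ldots,n_d,\infty)$ is exactly the family $\{\pi : \mathrm{sh}(\pi) \subseteq \rho\}$ with $\rho = [n_1] \times \cdots \times [n_d]$.

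With this identification in hand, I would set $\rho = [n_1] \times \cdots \times [n_d]$ in \eqref{m1}. The left-hand side of \eqref{m1} becomes the sum of $\prod_{(i_1,\ldots,i_{d+1}) \in \mathrm{Cor}(\pi)} x^{(1)}_{i_1}\cdots x^{(d)}_{i_d}$ over all $\pi$ with $\mathrm{sh}(\pi) \subseteq \rho$, which is precisely the left-hand side of the corollary. The right-hand side $\prod_{(i_1,\ldots,i_d)\in\rho}(1 - x^{(1)}_{i_1}\cdots x^{(d)}_{i_d})^{-1}$, for the full box $\rho$, factors as the iterated product $\prod_{i_1=1}^{n_1}\cdots\prod_{i_d=1}^{n_d}(1 - x^{(1)}_{i_1}\cdots x^{(d)}_{i_d})^{-1}$, giving the claim.

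If a self-contained argument were preferred, I would instead bypass the general theorem and repeat its proof for this shape: by Theorem~\ref{bij} with $\rho = [n_1] \times \cdots \times [n_d]$ and $n = \infty$, the map $\Phi$ is a bijection between $\mathcal{M}(n_1,\ldots,n_d)$ and $\mathcal{P}(n_1,\ldots,n_d,\infty)$, and by Lemma~\ref{wei} it is weight-preserving, $w_A = w(\pi)$. Summing $w_A$ over all $\mathbb{N}$-matrices supported on the box, the entries $a_{i_1,\ldots,i_d} \in \mathbb{N}$ vary freely and independently, so the generating function factors into the product over the box of the geometric series $\sum_{a\ge 0}(x^{(1)}_{i_1}\cdots x^{(d)}_{i_d})^{a} = (1 - x^{(1)}_{i_1}\cdots x^{(d)}_{i_d})^{-1}$, and transporting along $\Phi$ yields the stated formula. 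Because the result is an immediate specialization, there is no genuine obstacle here; the only step requiring any care is the bookkeeping that confines the shape to the box while leaving the heights unbounded, and once that identification is made the rest is routine.
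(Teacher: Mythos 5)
Your proof is correct and is exactly the paper's intended argument: the corollary is presented there as an immediate specialization of identity \eqref{m1} with $\rho = [n_1] \times \cdots \times [n_d]$, and your verification that $\mathcal{P}(n_1,\ldots,n_d,\infty)$ coincides with $\{\pi : \mathrm{sh}(\pi) \subseteq \rho\}$ is the right (and only) bookkeeping needed. Your alternative self-contained argument via Theorem~\ref{bij} and Lemma~\ref{wei} is also just the proof of Theorem~\ref{mult} replayed for this shape, so it does not constitute a genuinely different route.
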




\begin{corollary}[Solid partitions, $d = 3$]
Let $\rho$ be a plane partition. We have 
\begin{align*}
\sum_{\pi \in \mathcal{P}^{(3)},\, \mathrm{sh}(\pi) \subseteq D(\rho)}\, \prod_{(i,j,k,\ell) \in \mathrm{Cor}(\pi)} x^{}_{i} y_{j} z_{k} &= \prod_{(i,j,k) \in D(\rho)} {\left(1 - x^{}_{i} y_j z_k \right)^{-1}}\\
\sum_{\pi \in \mathcal{P}^{(3)},\, \mathrm{sh}(\pi) = D(\rho)}\, \prod_{(i,j,k,\ell) \in \mathrm{Cor}(\pi)} x^{}_{i} y_{j} z_{k} &= \prod_{(i,j,k) \in D(\rho)} {\left(1 - x^{}_{i} y_j z_k \right)^{-1}} \prod_{(i, j, k) \in \mathrm{Cr}(\rho)} x^{}_{i} y_j z_k.
\end{align*}
\end{corollary}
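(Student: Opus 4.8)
The plan is to derive this corollary as the direct specialization of the general formulas of Theorem~\ref{mult} to the case $d = 3$, once the notation is matched up. The only conceptual point to settle first is the meaning of the shape. Recall from Section~\ref{sec:prelim} that the shape of a $3$-dimensional partition is itself the diagram of a $2$-dimensional partition, i.e. of a plane partition. Hence the fixed shape $\rho \subset \mathbb{Z}^3_+$ required as input by Theorem~\ref{mult} is precisely a set of the form $D(\rho)$ where $\rho$ is a plane partition, which is exactly the hypothesis of the present corollary.

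Next I would perform the change of variables, renaming the three families of indeterminates by $\mathbf{x}^{(1)} = \mathbf{x}$, $\mathbf{x}^{(2)} = \mathbf{y}$, and $\mathbf{x}^{(3)} = \mathbf{z}$. Under this renaming, an index $(i_1, i_2, i_3)$ becomes $(i, j, k)$ and the monomial $x^{(1)}_{i_1} x^{(2)}_{i_2} x^{(3)}_{i_3}$ becomes $x_i y_j z_k$, while a corner $(i_1, \ldots, i_{d+1})$ with $d = 3$ is the quadruple $(i, j, k, \ell) \in \mathbb{Z}^4_+$ appearing in the statement. With these substitutions, and with the input shape of Theorem~\ref{mult} taken to be $D(\rho)$, the left-hand sides of \eqref{m1} and \eqref{m2} become verbatim the two sums over $\pi \in \mathcal{P}^{(3)}$ displayed in the corollary, the product over $\rho$ becomes the product over $(i,j,k) \in D(\rho)$, and the product over $\mathrm{Cr}(\rho)$ becomes the product of monomials $x_i y_j z_k$ over $(i,j,k) \in \mathrm{Cr}(\rho)$. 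Reading off \eqref{m1} then yields the first identity (the sum over $\mathrm{sh}(\pi) \subseteq D(\rho)$), and reading off \eqref{m2} yields the second (the sum over $\mathrm{sh}(\pi) = D(\rho)$), the latter carrying the extra monomial factor $\prod_{(i,j,k) \in \mathrm{Cr}(\rho)} x_i y_j z_k$.

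Because this is a pure specialization of an already-established theorem, there is essentially no new mathematical content and no genuine obstacle. The only step demanding care is the bookkeeping of the notational identification: one must confirm that the phrase \emph{``shape $\rho$ of a $3$-dimensional partition''} in Theorem~\ref{mult} is consistent with \emph{``$D(\rho)$ for a plane partition $\rho$''} used here, and that the corner index correctly lengthens from $(i_1,\ldots,i_d)$ to the quadruple $(i,j,k,\ell)$ to account for the extra coordinate of a solid partition. Once this dictionary is in place, the two asserted identities are immediate instances of \eqref{m1} and \eqref{m2}.
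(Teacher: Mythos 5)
Your proof is correct and matches the paper exactly: the paper presents this corollary as an immediate special case of Theorem~\ref{mult}, obtained by setting $d=3$, taking the shape to be $D(\rho)$ for a plane partition $\rho$, and renaming $\mathbf{x}^{(1)},\mathbf{x}^{(2)},\mathbf{x}^{(3)}$ as $\mathbf{x},\mathbf{y},\mathbf{z}$. Your careful bookkeeping of the shape identification and the corner quadruples $(i,j,k,\ell)$ is precisely the (only) content of the deduction.
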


\begin{corollary}[Plane partitions, $d = 2$]
Let $\lambda$ be a partition. 
We have 
\begin{align*}
\sum_{\pi \in \mathcal{P}^{(2)},\, \mathrm{sh}(\pi) \subseteq \lambda}\, \prod_{(i,j,k) \in \mathrm{Cor}(\pi)} x^{}_{i} y_{j} &= \prod_{(i,j) \in D(\lambda)} {\left(1 - x^{}_{i} y_j \right)^{-1}}\\
\sum_{\pi \in \mathcal{P}^{(2)},\, \mathrm{sh}(\pi)= \lambda}\, \prod_{(i,j,k) \in \mathrm{Cor}(\pi)} x^{}_{i} y_{j} &= \prod_{(i,j) \in D(\lambda)} {\left(1 - x^{}_{i} y_j  \right)^{-1}} \prod_{(i, j) \in \mathrm{Cr}(\lambda)} x^{}_{i} y_j.
\end{align*}
\end{corollary}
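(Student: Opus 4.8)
The plan is to obtain this corollary as the direct specialization of Theorem~\ref{mult} to the case $d = 2$, so no new combinatorial argument is needed. First I would invoke the convention from Section~\ref{sec:prelim} that a shape $\rho$ of a $d$-dimensional partition is the diagram of a $(d-1)$-dimensional partition. For $d = 2$ this says $\rho$ is the diagram of an ordinary integer partition, so I take $\rho = D(\lambda)$ and identify $\lambda$ with its Young diagram in the usual way. Next I would rename the two families of indeterminates by setting $x_i := x^{(1)}_i$ and $y_j := x^{(2)}_j$, so that the generic corner weight $x^{(1)}_{i_1}\cdots x^{(d)}_{i_d}$ appearing in Theorem~\ref{mult} collapses to $x_i y_j$.

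With these identifications the corners of $\pi \in \mathcal{P}^{(2)}$ lie in $\mathbb{Z}^{d+1}_+ = \mathbb{Z}^3_+$ and are indexed by triples $(i,j,k)$, matching the indexing $(i,j,k) \in \mathrm{Cor}(\pi)$ in the statement. The first displayed identity is then precisely \eqref{m1} read with $\rho = D(\lambda)$, where the product $\prod_{(i_1,\ldots,i_d)\in\rho}$ becomes $\prod_{(i,j)\in D(\lambda)}$. The second displayed identity is precisely \eqref{m2} with $\rho = D(\lambda)$, where the prefactor $\prod_{(i_1,\ldots,i_d)\in\mathrm{Cr}(\rho)} x^{(1)}_{i_1}\cdots x^{(d)}_{i_d}$ specializes to $\prod_{(i,j)\in\mathrm{Cr}(\lambda)} x_i y_j$, the product over the outer (top) corners of the Young diagram.

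Since Theorem~\ref{mult} is already established, the only thing left to check is the bookkeeping of notation: that the abstract shape $\rho$ is correctly read as $D(\lambda)$, that summing over $\mathrm{sh}(\pi)\subseteq\rho$ (respectively $\mathrm{sh}(\pi)=\rho$) coincides with $\mathrm{sh}(\pi)\subseteq\lambda$ (respectively $\mathrm{sh}(\pi)=\lambda$) under this identification, and that $\mathrm{Cr}(\rho)$ becomes $\mathrm{Cr}(\lambda)$. I do not anticipate any genuine obstacle here; all the substance of the result is contained in Theorem~\ref{mult}, which in turn rests on the weight-preserving bijection $\Phi$ of Theorem~\ref{bij} together with Lemmas~\ref{wei} and~\ref{co}. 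The present statement simply records the $d=2$ instance in the classical language of plane partitions.
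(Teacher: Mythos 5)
Your proposal is correct and matches the paper exactly: the paper presents this corollary (with no separate proof) as one of the ``immediate special cases'' of Theorem~\ref{mult}, obtained precisely by setting $d=2$, $\rho = D(\lambda)$, and renaming $x^{(1)}_i = x_i$, $x^{(2)}_j = y_j$ in \eqref{m1} and \eqref{m2}. Your bookkeeping of the identifications $\mathrm{sh}(\pi)\subseteq\rho \leftrightarrow \mathrm{sh}(\pi)\subseteq\lambda$ and $\mathrm{Cr}(\rho)\leftrightarrow\mathrm{Cr}(\lambda)$ is exactly what the paper leaves implicit.
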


\begin{remark}
For $d = 2$, the formula in the special rectangular case (up to rotation of diagrams of plane partitions) was proved in \cite{dy5}. 
\end{remark}


\section{MacMahon's numbers and statistics}\label{sec:mac1}
\subsection{Corner-hook volume} Let $\pi \in \mathcal{P}^{(d)}$ be a $d$-dimensional partition. For each point $(i_1,\ldots,i_d)$, 
define the {\it cohook} length
$$
\mathrm{ch}(i_1,\ldots,i_d) := i_1 + \ldots + i_d - d + 1.
$$
Define now the {\it corner-hook volume} statistics $|\cdot|_{ch} : \mathcal{P}^{(d)} \to \mathbb{N}$ computed as follows
$$
|\pi|_{ch} := \sum_{(\mathbf{i}, i_{d+1})\, \in\, \mathrm{Cor}(\pi)} \mathrm{ch}(\mathbf{i}).
$$
\begin{example}
Let $d = 2$ and $\pi$ be the plane partition given in Fig.~\ref{fig0}. Recall that 
\begin{align*}
\mathrm{Cor}(\pi) &= \{(i,j,k) \in D(\pi) : (i+1,j,k), (i,j+1,k) \not\in D(\pi) \} \\
&= \{(1,1,4), (1,3,1), (1,3,2), (2,2,1), (2,2,2), (2,2,3) \}
\end{align*}
and hence we have  
$$
|\pi|_{ch} = (1 + 1 - 1) + (1 + 3 - 1) + (1 + 3 - 1) + (2 + 2 - 1) + (2 + 2 - 1) + (2 + 2 - 1) = 16.
$$
\end{example}

\begin{theorem}\label{shaped}
Let $\rho \subset \mathbb{Z}^d_+$ be a fixed shape of a $d$-dimensional partition.  
We have the following generating functions 
\begin{align*}
\sum_{\pi \in \mathcal{P}^{(d)},\, \mathrm{sh}(\pi) \subseteq \rho} t^{\mathrm{cor}(\pi)} q^{|\pi|_{ch}} &= \prod_{(i_1,\ldots,i_d) \in \rho} {\left( 1 - t q^{i_1 + \cdots + i_d - d + 1} \right)^{-1}},\\
\sum_{\pi \in \mathcal{P}^{(d)},\, \mathrm{sh}(\pi) = \rho} t^{\mathrm{cor}(\pi)} q^{|\pi|_{ch}} &= t^{\mathrm{cr}(\rho)} q^{|\rho|_{cr}} \prod_{(i_1,\ldots,i_d) \in \rho} {\left( 1 - t q^{i_1 + \cdots + i_d - d + 1} \right)^{-1}}, 
\end{align*}
where 
$$|\rho|_{cr} := \sum_{(i_1,\ldots, i_d) \in \mathrm{Cr}(\rho)} \mathrm{ch}(i_1,\ldots,i_d). 
$$
\end{theorem}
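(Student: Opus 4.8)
The plan is to derive both identities by a single substitution into the two multivariate generating functions of Theorem~\ref{mult}. The key observation is that the target statistics $\mathrm{cor}(\pi)$ and $|\pi|_{ch}$ are exactly what the monomial weight $w(\pi) = \prod_{(i_1,\ldots,i_{d+1}) \in \mathrm{Cor}(\pi)} x^{(1)}_{i_1} \cdots x^{(d)}_{i_d}$ records once we collapse each variable $x^{(\ell)}_{j}$ to an appropriate power of $q$, together with a single bookkeeping variable $t$. Concretely, I would specialize
\[
x^{(1)}_{j} \longmapsto t\, q^{j}, \qquad x^{(\ell)}_{j} \longmapsto q^{\,j-1} \quad (2 \le \ell \le d),
\]
so that for every index $(i_1,\ldots,i_d)$ the product telescopes to
\[
x^{(1)}_{i_1} \cdots x^{(d)}_{i_d} \longmapsto t\, q^{\,i_1 + (i_2 - 1) + \cdots + (i_d - 1)} = t\, q^{\,i_1 + \cdots + i_d - d + 1} = t\, q^{\mathrm{ch}(i_1,\ldots,i_d)}.
\]

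Next I would track the effect on each side. On the summand side, every corner $(i_1,\ldots,i_d,i_{d+1}) \in \mathrm{Cor}(\pi)$ contributes one factor $x^{(1)}_{i_1}\cdots x^{(d)}_{i_d} \mapsto t\, q^{\mathrm{ch}(i_1,\ldots,i_d)}$, so the total weight becomes
\[
w(\pi) \longmapsto \prod_{(\mathbf{i},i_{d+1}) \in \mathrm{Cor}(\pi)} t\, q^{\mathrm{ch}(\mathbf{i})} = t^{\,\mathrm{cor}(\pi)}\, q^{\,|\pi|_{ch}},
\]
using $\mathrm{cor}(\pi) = |\mathrm{Cor}(\pi)|$ and the definition of the corner-hook volume. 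On the product side of \eqref{m1} each factor $(1 - x^{(1)}_{i_1}\cdots x^{(d)}_{i_d})^{-1}$ turns into $(1 - t\, q^{i_1 + \cdots + i_d - d + 1})^{-1}$, giving the first identity. For the shape-exact case I would apply the same substitution to \eqref{m2}: the extra prefactor $\prod_{(i_1,\ldots,i_d) \in \mathrm{Cr}(\rho)} x^{(1)}_{i_1}\cdots x^{(d)}_{i_d}$ becomes $t^{\,\mathrm{cr}(\rho)}\, q^{\,|\rho|_{cr}}$ by the definitions of $\mathrm{cr}(\rho)$ and $|\rho|_{cr}$, while the infinite-product factor is the same as before; this yields the second identity.

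There is essentially no analytic obstacle here, only a couple of bookkeeping points to settle cleanly, and these are what I would regard as the (mild) crux. First, I should confirm the specialization is well defined as a formal power series: since $\rho$ is the shape of a partition it is a finite set, so the right-hand products are finite, and although the left-hand sums range over infinitely many partitions, each fixed power of $q$ is attained finitely often and the $t$-degree $\mathrm{cor}(\pi)$ is bounded by $|\rho|$, so no convergence issue arises. Second, I should note that the way the factor $t\,q$ is distributed among the $d$ variable families is immaterial: any assignment with $\prod_\ell x^{(\ell)}_{i_\ell} \mapsto t\, q^{\mathrm{ch}(\mathbf{i})}$ works, the asymmetric choice above being merely convenient. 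The only genuine content is Theorem~\ref{mult} itself, which I am allowed to assume; everything after it reduces to the substitution above.
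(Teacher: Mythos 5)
Your proof is correct and is essentially identical to the paper's own argument: the paper proves Theorem~\ref{shaped} by exactly the substitution $x^{(1)}_{i} = t q^{i}$, $x^{(k)}_{i} = q^{i-1}$ for $k \ge 2$ in Theorem~\ref{mult}. The additional bookkeeping you spell out (the weight collapsing to $t^{\mathrm{cor}(\pi)} q^{|\pi|_{ch}}$, the prefactor becoming $t^{\mathrm{cr}(\rho)} q^{|\rho|_{cr}}$, and well-definedness of the specialization) is all sound and is left implicit in the paper.
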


\begin{proof}
In Theorem~\ref{mult} set $x^{(1)}_{i} = t q^{i}$ and $x^{(k)}_{i} = q^{i-1}$ for all $i \ge 1$ and $k \ge 2$. 
\end{proof}

\begin{corollary}[Boxed version] We have
\begin{align*}
\sum_{\pi \in \mathcal{P}^{}(n_1,\ldots, n_d, \infty)} t^{\mathrm{cor}(\pi)} q^{|\pi|_{ch}} &= 
\prod_{i_1 = 1}^{n_1} \cdots \prod_{i_d = 1}^{n_d}
{\left( 1 - t q^{i_1 + \cdots + i_d - d + 1} \right)^{-1}}.
\end{align*}
\end{corollary}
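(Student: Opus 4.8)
The plan is to obtain this boxed identity as the special case $\rho = [n_1] \times \cdots \times [n_d]$ of the first generating function in Theorem~\ref{shaped}, so that the entire task reduces to unwinding the definitions of the two index sets and confirming that the rectangular box is an admissible shape.

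First I would verify that $\rho = [n_1] \times \cdots \times [n_d]$ is genuinely a shape of a $d$-dimensional partition, i.e.\ a diagram of a $(d-1)$-dimensional partition. Indeed it is the diagram of the constant $(d-1)$-dimensional partition $\mu_{j_1,\ldots,j_{d-1}} = n_d$ supported on $[n_1] \times \cdots \times [n_{d-1}]$, which is weakly decreasing in every coordinate; hence Theorem~\ref{shaped} legitimately applies to this $\rho$.

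Next I would match the summation ranges. By definition $\mathcal{P}(n_1,\ldots,n_d,\infty) = \{\pi \in \mathcal{P}^{(d)} : D(\pi) \subseteq [n_1]\times\cdots\times[n_d]\times\mathbb{Z}_+\}$, and since $D(\pi)$ contains the point $(i_1,\ldots,i_d,1)$ precisely when $(i_1,\ldots,i_d) \in \mathrm{sh}(\pi)$, this boxing condition is equivalent to $\mathrm{sh}(\pi) \subseteq [n_1]\times\cdots\times[n_d] = \rho$; leaving the $(d{+}1)$-st coordinate unbounded (the ``$\infty$'') imposes no further constraint. Thus the left-hand sum of Theorem~\ref{shaped} taken over $\{\mathrm{sh}(\pi) \subseteq \rho\}$ coincides with the sum over $\mathcal{P}(n_1,\ldots,n_d,\infty)$.

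Finally I would rewrite the product. With $\rho$ the full box, the product $\prod_{(i_1,\ldots,i_d)\in\rho}$ is exactly the iterated product $\prod_{i_1=1}^{n_1}\cdots\prod_{i_d=1}^{n_d}$, while the exponent $i_1+\cdots+i_d-d+1$ is carried over verbatim, yielding the claimed formula. There is no substantive obstacle here; the only point requiring a moment of care is the definitional check that the $n_{d+1}=\infty$ boxing condition is equivalent to the shape containment $\mathrm{sh}(\pi)\subseteq\rho$, after which the result is an immediate specialization.
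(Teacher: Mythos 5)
Your proposal is correct and matches the paper's intended derivation: the corollary is stated there as an immediate specialization of Theorem~\ref{shaped} with $\rho = [n_1] \times \cdots \times [n_d]$, with no further argument given. Your two verifications---that the box is the diagram of a constant $(d-1)$-dimensional partition, and that membership in $\mathcal{P}(n_1,\ldots,n_d,\infty)$ is equivalent to $\mathrm{sh}(\pi) \subseteq \rho$---are exactly the definitional checks that justify this specialization.
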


\begin{corollary}[Full generating function] \label{full}
We have
$$
\sum_{\pi \in \mathcal{P}^{(d)}} t^{\mathrm{cor}(\pi)} q^{|\pi|_{ch}} = \prod_{n \ge 1} (1 - t q^{n})^{-\binom{n + d - 2}{d - 1}}.
$$
\end{corollary}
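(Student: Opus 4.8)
The plan is to obtain Corollary~\ref{full} as the $\rho \to \mathbb{Z}^d_+$ limit of the first identity in Theorem~\ref{shaped}. Since every $d$-dimensional partition has a finite shape, as $\rho$ exhausts $\mathbb{Z}^d_+$ the summation condition $\mathrm{sh}(\pi) \subseteq \rho$ eventually holds for any fixed $\pi$; hence the left-hand side of that identity converges (as a formal power series in $q$, with $t$ tracking corners) to $\sum_{\pi \in \mathcal{P}^{(d)}} t^{\mathrm{cor}(\pi)} q^{|\pi|_{ch}}$. Correspondingly, the right-hand side becomes the infinite product $\prod_{(i_1,\ldots,i_d) \in \mathbb{Z}^d_+} (1 - t q^{i_1 + \cdots + i_d - d + 1})^{-1}$, which I would first check is well-defined as a formal power series by noting that each fixed power of $q$ receives contributions from only finitely many factors.

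The key step is then to regroup the factors of this infinite product according to the common value of the cohook length $n := i_1 + \cdots + i_d - d + 1$. Since each $i_j \ge 1$, the minimal value is $n = 1$, attained at $(1,\ldots,1)$, so the product ranges over $n \ge 1$. For a fixed $n$, the factors carrying exponent $n$ are indexed by the lattice points $(i_1,\ldots,i_d) \in \mathbb{Z}^d_+$ with $i_1 + \cdots + i_d = n + d - 1$. By a standard stars-and-bars count, the number of compositions of $n + d - 1$ into $d$ positive parts equals $\binom{(n+d-1)-1}{d-1} = \binom{n+d-2}{d-1}$. Collecting all factors that share the exponent $n$ therefore produces $(1 - t q^n)^{-\binom{n+d-2}{d-1}}$, and taking the product over all $n \ge 1$ yields the asserted right-hand side.

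I do not anticipate a serious difficulty; the only point demanding care is the justification of the limit, i.e. that passing from finite $\rho$ to all of $\mathbb{Z}^d_+$ is legitimate at the level of formal power series. This is immediate once one observes that each monomial $t^{\mathrm{cor}(\pi)} q^{|\pi|_{ch}}$ on the left stabilizes once $\rho$ contains the finite shape of $\pi$, and, symmetrically, that each coefficient of the infinite product on the right is determined by the finitely many factors whose exponent does not exceed the relevant power of $q$. Thus the identity of the two limits follows termwise from Theorem~\ref{shaped}, completing the proof.
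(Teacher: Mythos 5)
Your proposal is correct and matches the paper's (implicit) derivation: Corollary~\ref{full} is exactly the limit of Theorem~\ref{shaped} over shapes exhausting $\mathbb{Z}^d_+$, with the factors regrouped by the cohook value $n = i_1 + \cdots + i_d - d + 1$, where the stars-and-bars count $\binom{n+d-2}{d-1}$ gives the exponent. The formal-power-series justification you flag is the right (and only) point of care, and your termwise stabilization argument handles it.
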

\begin{corollary}[Interpretation of MacMahon's numbers]\label{intm}
We have 
$$
\sum_{\pi \in \mathcal{P}^{(d)}} q^{|\pi|_{ch}} = \prod_{n \ge 1} (1 - q^{n})^{-\binom{n + d - 2}{d - 1}}  = \sum_{n = 0}^{\infty} m_d(n) q^n
$$
and hence
$$m_d(n) = |\{ \pi \in \mathcal{P}^{(d)} : |\pi|_{ch} = n \}|,$$ 
i.e. $m_d(n)$ is the number $d$-dimensional partitions whose corner-hook volume is $n$.
\end{corollary}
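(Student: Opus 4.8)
The plan is to deduce Corollary~\ref{intm} directly from the two-variable identity of Corollary~\ref{full} by specializing $t = 1$ and then reading off coefficients of the resulting formal power series in $q$.

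First I would substitute $t = 1$ into the identity of Corollary~\ref{full}. That identity is an equality of formal power series in $q$ whose coefficients are polynomials in $t$, so the substitution is legitimate and produces
$$
\sum_{\pi \in \mathcal{P}^{(d)}} q^{|\pi|_{ch}} = \prod_{n \ge 1} (1 - q^n)^{-\binom{n + d - 2}{d - 1}}.
$$
By the very definition of MacMahon's numbers recalled in the introduction, the right-hand side equals $\sum_{n \ge 0} m_d(n)\, q^n$, which already establishes the displayed chain of equalities in the statement.

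Next I would group the partitions on the left according to their corner-hook volume, writing
$$
\sum_{\pi \in \mathcal{P}^{(d)}} q^{|\pi|_{ch}} = \sum_{n \ge 0} \bigl| \{ \pi \in \mathcal{P}^{(d)} : |\pi|_{ch} = n \} \bigr|\, q^n,
$$
and then comparing the coefficient of $q^n$ on both sides to conclude $m_d(n) = |\{ \pi \in \mathcal{P}^{(d)} : |\pi|_{ch} = n \}|$.

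The only point demanding attention---and hence the step I would single out as the main obstacle---is justifying that the left-hand side is a well-defined formal power series, i.e. that each corner-hook volume is attained by only finitely many partitions. Here I would appeal to the bijection $\Phi$: under the specialization $x^{(1)}_i = t q^i$ and $x^{(k)}_i = q^{i-1}$ for $k \ge 2$, the weight $w_A$ of the matrix $A = \Phi^{-1}(\pi)$ becomes $t^{\mathrm{cor}(\pi)} q^{|\pi|_{ch}}$ with $|\pi|_{ch} = \sum_{\mathbf{i}} a_{\mathbf{i}}\, \mathrm{ch}(\mathbf{i})$ and every cohook length satisfying $\mathrm{ch}(\mathbf{i}) \ge 1$. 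Consequently a partition with $|\pi|_{ch} = n$ corresponds to a matrix supported on the finitely many positions with $\mathrm{ch}(\mathbf{i}) \le n$ and having bounded entries, so only finitely many such $\pi$ exist. Since this finiteness is in any case built into Corollary~\ref{full} being a bona fide identity of power series, once that corollary is granted the present statement follows with essentially no further work.
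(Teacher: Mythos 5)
Your proposal is correct and matches the paper's (implicit) argument: the paper presents this corollary as an immediate consequence of Corollary~\ref{full}, obtained by setting $t=1$ and invoking MacMahon's definition of $m_d(n)$ as the coefficients of $\prod_{n\ge 1}(1-q^n)^{-\binom{n+d-2}{d-1}}$, exactly as you do. Your additional verification that each corner-hook volume is attained by only finitely many partitions (via the bijection $\Phi$ and the bound $\mathrm{ch}(\mathbf{i})\ge 1$) is a sound rigor check that the paper leaves implicit in its formal power series setup.
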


\begin{corollary}[Pyramid partitions]
Let $\Delta_d({m})$ be a $d$-dimensional partition whose diagram is 
$D(\Delta_{d}({m})) = \{(i_1,\ldots, i_{d+1}) : \mathbb{Z}^{d+1}_+ : i_1 + \cdots + i_{d+1} - d \le m\}$. We have
$$
\sum_{\pi \in \mathcal{P}^{}(\Delta_{d-1}({m}), \infty)} t^{\mathrm{cor}(\pi)} q^{|\pi|_{ch}} = \prod_{n = 1}^m (1 - t q^{n})^{-\binom{n + d - 2}{d - 1}}.
$$
\end{corollary}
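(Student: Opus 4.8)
The plan is to deduce this as a direct specialization of the first identity in Theorem~\ref{shaped}, in the same spirit as Corollary~\ref{full} but with the range of cohook values truncated by the pyramid shape. The first step is to make the shape $\rho := D(\Delta_{d-1}(m))$ explicit. Substituting $d \mapsto d-1$ in the defining condition of the pyramid diagram gives
$$
\rho = \{(i_1,\ldots,i_d) \in \mathbb{Z}^d_+ : i_1 + \cdots + i_d - (d-1) \le m\} = \{\mathbf{i} \in \mathbb{Z}^d_+ : \mathrm{ch}(\mathbf{i}) \le m\},
$$
since the cohook length satisfies $\mathrm{ch}(i_1,\ldots,i_d) = i_1 + \cdots + i_d - d + 1 = i_1 + \cdots + i_d - (d-1)$. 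Thus the pyramid shape is precisely the sublevel set of the cohook function at height $m$, which is the observation that makes the truncation clean.

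Next I would apply the first identity of Theorem~\ref{shaped} with this $\rho$, yielding
$$
\sum_{\pi \in \mathcal{P}^{}(\Delta_{d-1}(m),\infty)} t^{\mathrm{cor}(\pi)} q^{|\pi|_{ch}} = \prod_{\mathbf{i} \in \rho} \left(1 - t q^{\mathrm{ch}(\mathbf{i})}\right)^{-1}.
$$
I would then regroup the factors on the right according to the value $n = \mathrm{ch}(\mathbf{i})$. By the description of $\rho$ this value ranges over $n = 1,\ldots,m$, the minimum $n=1$ being attained at $\mathbf{i} = (1,\ldots,1)$, so the product collapses to $\prod_{n=1}^{m} (1 - tq^n)^{-c_n}$, where $c_n$ is the number of lattice points $\mathbf{i} \in \mathbb{Z}^d_+$ with $\mathrm{ch}(\mathbf{i}) = n$.

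The remaining ingredient is the count $c_n$. Writing $i_\ell = j_\ell + 1$ with $j_\ell \ge 0$, the equation $\mathrm{ch}(\mathbf{i}) = n$ becomes $j_1 + \cdots + j_d = n-1$, so a stars-and-bars argument gives $c_n = \binom{(n-1)+(d-1)}{d-1} = \binom{n+d-2}{d-1}$. Substituting this back produces exactly $\prod_{n=1}^m (1 - tq^n)^{-\binom{n+d-2}{d-1}}$, as claimed. I do not expect a serious obstacle here: beyond invoking Theorem~\ref{shaped}, the only content is the identification of the pyramid as the cohook sublevel set together with the elementary lattice-point count, the latter being the same count that is used to pass from Theorem~\ref{shaped} to Corollary~\ref{full} in the untruncated case.
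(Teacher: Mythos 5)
Your proposal is correct and matches the paper's intended argument: the corollary is stated as an immediate specialization of Theorem~\ref{shaped}, obtained exactly as you describe by recognizing the pyramid $D(\Delta_{d-1}(m))$ as the sublevel set $\{\mathbf{i} \in \mathbb{Z}^d_+ : \mathrm{ch}(\mathbf{i}) \le m\}$ and grouping the factors by cohook value, with the stars-and-bars count $\binom{n+d-2}{d-1}$ — the same count underlying Corollary~\ref{full}. No gaps.
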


\begin{corollary}[$q = 1$ specialization]
We have 
\begin{align*}
\sum_{\pi \in \mathcal{P}^{(d)},\, \mathrm{sh}(\pi) \subseteq \rho} t^{\mathrm{cor}(\pi)}  &= 
{\left( 1 - t \right)^{-|\rho|}},\\
\sum_{\pi \in \mathcal{P}^{(d)},\, \mathrm{sh}(\pi) = \rho} t^{\mathrm{cor}(\pi)} &= t^{\mathrm{cr}(\rho)} 
{\left( 1 - t \right)^{-|\rho|}}. 
\end{align*}
Then the number of $\pi \in \mathcal{P}^{(d)}$ of shape $\rho$ with $k$ corners is equal to $\binom{k - \mathrm{cr}(\rho) + |\rho| - 1}{|\rho| - 1}$. 
\end{corollary}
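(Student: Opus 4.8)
The plan is to obtain this corollary as a direct specialization of Theorem~\ref{shaped} at $q = 1$, followed by a coefficient extraction via the negative binomial theorem. First I would set $q = 1$ in the first identity of Theorem~\ref{shaped}. Because $\rho$ is a \emph{fixed finite} shape, the product $\prod_{(i_1,\ldots,i_d) \in \rho} (1 - t q^{i_1 + \cdots + i_d - d + 1})^{-1}$ is a finite product indexed by the cells of $\rho$, so the substitution $q = 1$ raises no convergence issue. Each factor collapses to $(1 - t)^{-1}$, and the number of factors is exactly the number of cells of $\rho$, namely $|\rho|$; hence the right-hand side becomes $(1 - t)^{-|\rho|}$, giving the first formula. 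The same substitution in the second identity sends the prefactor $q^{|\rho|_{cr}}$ to $1$ while leaving $t^{\mathrm{cr}(\rho)}$ untouched, so the right-hand side becomes $t^{\mathrm{cr}(\rho)} (1 - t)^{-|\rho|}$, which is the second formula.

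For the enumerative claim, I would read off the coefficient of $t^k$ in the second generating function. By the negative binomial theorem, $(1 - t)^{-|\rho|} = \sum_{m \ge 0} \binom{m + |\rho| - 1}{|\rho| - 1} t^m$, so that $t^{\mathrm{cr}(\rho)} (1 - t)^{-|\rho|} = \sum_{m \ge 0} \binom{m + |\rho| - 1}{|\rho| - 1} t^{m + \mathrm{cr}(\rho)}$. Since the left-hand side of the second identity is the generating function $\sum_{\pi} t^{\mathrm{cor}(\pi)}$ over partitions $\pi$ of shape exactly $\rho$, the number of such $\pi$ with precisely $k$ corners is the coefficient of $t^k$ on the right. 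Setting $m + \mathrm{cr}(\rho) = k$, i.e.\ $m = k - \mathrm{cr}(\rho)$, yields the count $\binom{k - \mathrm{cr}(\rho) + |\rho| - 1}{|\rho| - 1}$, as asserted (and this coefficient is understood to vanish when $k < \mathrm{cr}(\rho)$, consistent with the fact that every partition of shape $\rho$ has at least $\mathrm{cr}(\rho)$ corners).

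Since the corollary follows immediately from Theorem~\ref{shaped}, there is no substantive obstacle. The only points requiring a moment's care are the legitimacy of the $q = 1$ specialization and the bookkeeping on the exponent: one must note that the indexing set of the product is precisely the set of cells of $\rho$, so its cardinality is exactly $|\rho|$, and that the exponent $|\rho|_{cr}$ appearing on $q$ is irrelevant after the specialization. Both facts are transparent from the finiteness of $\rho$, so the proof reduces to these two short observations together with the standard series expansion.
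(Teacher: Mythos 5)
Your proof is correct and is essentially the paper's own argument: the corollary is stated as an immediate specialization of Theorem~\ref{shaped} at $q=1$, which is exactly what you do, and the coefficient extraction via $(1-t)^{-|\rho|}=\sum_{m\ge 0}\binom{m+|\rho|-1}{|\rho|-1}t^m$ is the intended (routine) final step. No gaps; your observation that the formal specialization is harmless because $\rho$ is finite matches the paper's implicit reasoning.
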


\subsection{Solid partitions, $d = 3$} Let us restate some of these results for solid partitions. Let $\pi \in \mathcal{P}^{(3)}$ be a solid partition. We then have 
$$
|\pi|_{ch} = \sum_{(i,j,k,\ell)\in \mathrm{Cor}(\pi)} (i + j + k - 2).
$$
\begin{corollary} Let $\rho$ be a fixed plane partition. We have
\begin{align*}
\sum_{\pi \in \mathcal{P}^{(3)},\, \mathrm{sh}(\pi) \subseteq \rho} t^{\mathrm{cor}(\pi)} q^{|\pi|_{ch}} &= \prod_{(i, j, k) \in D(\rho)} {\left( 1 - t q^{i + j + k - 2} \right)^{-1}}\\
\sum_{\pi \in \mathcal{P}^{(3)},\, \mathrm{sh}(\pi) = \rho} t^{\mathrm{cor}(\pi)} q^{|\pi|_{ch}} &= t^{\mathrm{cr}(\rho)} q^{|\rho|_{cr}} \prod_{(i, j, k) \in D(\rho)} {\left( 1 - t q^{i + j + k - 2} \right)^{-1}} 
\end{align*}
and in particular the boxed version
\begin{align*}
\sum_{\pi \in \mathcal{P}^{}(n_1,n_2, n_3, \infty)} t^{\mathrm{cor}(\pi)} q^{|\pi|_{ch}} &= 
\prod_{i = 1}^{n_1} \prod_{j = 1}^{n_2} \prod_{k = 1}^{n_3}
{\left( 1 - t q^{i + j + k - 2} \right)^{-1}}
\end{align*}
\end{corollary}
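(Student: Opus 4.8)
The plan is to obtain every identity in this corollary as the direct specialization $d = 3$ of Theorem~\ref{shaped}, so that the work is entirely notational rather than conceptual. Setting $d = 3$ there, the cohook length becomes $\mathrm{ch}(i_1, i_2, i_3) = i_1 + i_2 + i_3 - d + 1 = i + j + k - 2$ after renaming the coordinates $(i_1, i_2, i_3) = (i, j, k)$, which is exactly the exponent of $q$ appearing in the target formulas; correspondingly the corner-hook volume specializes to $|\pi|_{ch} = \sum_{(i,j,k,\ell) \in \mathrm{Cor}(\pi)} (i + j + k - 2)$, as already recorded just above the statement.

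Next I would translate the indexing set of the product. By the remark in Section~\ref{sec:prelim}, the shape $\rho$ of a solid partition $\pi \in \mathcal{P}^{(3)}$ is precisely the diagram $D(\rho)$ of a plane partition $\rho$. Rewriting the index set $\{(i_1, i_2, i_3) \in \rho\}$ as $\{(i,j,k) \in D(\rho)\}$ turns $\prod_{(i_1, i_2, i_3) \in \rho}(1 - t q^{i_1 + i_2 + i_3 - 2})^{-1}$ into $\prod_{(i,j,k) \in D(\rho)}(1 - t q^{i + j + k - 2})^{-1}$, yielding the first (shape-contained) identity verbatim. The second (fixed-shape) identity follows in the same way, with the prefactor $t^{\mathrm{cr}(\rho)} q^{|\rho|_{cr}}$ carried over unchanged from Theorem~\ref{shaped}, where now $|\rho|_{cr} = \sum_{(i,j,k) \in \mathrm{Cr}(\rho)} (i + j + k - 2)$.

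Finally, for the boxed version I would take $\rho$ to be the full rectangular box, i.e. the shape $[n_1] \times [n_2] \times [n_3]$. Since the condition $\pi \in \mathcal{P}(n_1, n_2, n_3, \infty)$ is exactly $\mathrm{sh}(\pi) \subseteq [n_1] \times [n_2] \times [n_3]$ (the fourth coordinate being unbounded), the first identity specializes to the triple product $\prod_{i=1}^{n_1} \prod_{j=1}^{n_2} \prod_{k=1}^{n_3}(1 - t q^{i + j + k - 2})^{-1}$. There is no genuine obstacle here: the only point demanding a moment's care is the bookkeeping that identifies a solid partition's shape with the diagram of a plane partition, so that the index set of the product is transcribed correctly.
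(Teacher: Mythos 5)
Your proposal is correct and matches the paper's intent exactly: the paper presents this corollary as a mere restatement of Theorem~\ref{shaped} (and its boxed corollary) in the case $d = 3$, with the same identification of the shape of a solid partition with the diagram $D(\rho)$ of a plane partition and the same specialization $\rho = [n_1]\times[n_2]\times[n_3]$ for the boxed version. Nothing further is needed.
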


\subsection{Plane partitions, $d = 2$} Similarly, let us restate some of these results for plane partitions. Let $\pi \in \mathcal{P}^{(2)}$ be a plane partition. We then have 
$$
|\pi|_{ch} = \sum_{(i,j,k)\in \mathrm{Cor}(\pi)} (i + j - 1).
$$
\begin{corollary} Let $\lambda$ be a fixed partition. We have
\begin{align*}
\sum_{\pi \in \mathcal{P}^{(2)},\, \mathrm{sh}(\pi) \subseteq \lambda} t^{\mathrm{cor}(\pi)} q^{|\pi|_{ch}} &= \prod_{(i, j) \in D(\lambda)} {\left( 1 - t q^{i + j - 1} \right)^{-1}}\\
\sum_{\pi \in \mathcal{P}^{(2)},\, \mathrm{sh}(\pi) = \lambda} t^{\mathrm{cor}(\pi)} q^{|\pi|_{ch}} &= t^{\mathrm{cr}(\lambda)} q^{|\lambda|_{cr}} \prod_{(i, j) \in D(\lambda)} {\left( 1 - t q^{i + j - 1} \right)^{-1}} 
\end{align*}
and in particular the boxed version
\begin{align*}
\sum_{\pi \in \mathcal{P}^{}(n_1,n_2 \infty)} t^{\mathrm{cor}(\pi)} q^{|\pi|_{ch}} &= 
\prod_{i = 1}^{n_1} \prod_{j = 1}^{n_2} 
{\left( 1 - t q^{i + j - 1} \right)^{-1}}
\end{align*}
\end{corollary}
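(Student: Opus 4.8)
The plan is to obtain this statement as the two-dimensional specialization of the results already established, so the proof is a matter of substitution rather than new combinatorics. There are two essentially equivalent routes: either set $d = 2$ directly in Theorem~\ref{shaped}, or apply the specialization $x_i \mapsto t q^i$, $y_j \mapsto q^{j-1}$ (the same one used to deduce Theorem~\ref{shaped} from Theorem~\ref{mult}) to the $d=2$ case of Theorem~\ref{mult}, i.e.\ to the ``Plane partitions, $d=2$'' corollary above.

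Taking the first route, I would apply Theorem~\ref{shaped} with $d=2$ and shape $\rho = D(\lambda)$, where $\lambda$ is an ordinary integer partition; this is the general admissible shape in two dimensions, since (as noted in Sec.~\ref{sec:prelim}) the shape of a plane partition is exactly the diagram of a $1$-dimensional partition. The only quantity to recompute is the cohook length, which for $d=2$ becomes $\mathrm{ch}(i,j) = i + j - 2 + 1 = i + j - 1$; hence $|\pi|_{ch} = \sum_{(i,j,k)\in\mathrm{Cor}(\pi)}(i+j-1)$ and $|\lambda|_{cr} = \sum_{(i,j)\in\mathrm{Cr}(\lambda)}(i+j-1)$. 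Substituting $i_1 + i_2 - d + 1 = i + j - 1$ into the two displayed identities of Theorem~\ref{shaped} reproduces the first two formulas verbatim. As a cross-check, the second route gives the same thing: under $x_i \mapsto tq^i$, $y_j \mapsto q^{j-1}$ the corner weight $\prod_{(i,j,k)\in\mathrm{Cor}(\pi)} x_i y_j$ becomes $\prod_{(i,j,k)\in\mathrm{Cor}(\pi)} t q^{i+j-1} = t^{\mathrm{cor}(\pi)} q^{|\pi|_{ch}}$, while the product side $\prod_{(i,j)\in D(\lambda)}(1 - x_i y_j)^{-1}$ becomes $\prod_{(i,j)\in D(\lambda)}(1 - t q^{i+j-1})^{-1}$.

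For the boxed version I would specialize $\lambda$ to the rectangular partition whose diagram is the full rectangle $[n_1]\times[n_2]$. Then $\prod_{(i,j)\in D(\lambda)}$ becomes $\prod_{i=1}^{n_1}\prod_{j=1}^{n_2}$, and the index set of the sum is identified via $\mathrm{sh}(\pi)\subseteq[n_1]\times[n_2] \iff D(\pi)\subseteq[n_1]\times[n_2]\times\mathbb{Z}_+$, which is precisely the defining condition of $\mathcal{P}(n_1,n_2,\infty)$. The boxed identity is then the first formula evaluated at this $\lambda$.

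The main point requiring care is simply the shape convention: the corollary writes $\mathrm{sh}(\pi)\subseteq\lambda$ and sums/products over $(i,j)\in D(\lambda)$, so one must read $\rho = D(\lambda)$ throughout in order to see that the index sets in Theorem~\ref{shaped} and in the corollary coincide literally. Beyond this bookkeeping there is no genuine obstacle---every formula is a transcription of Theorem~\ref{shaped} (equivalently, of Theorem~\ref{mult}) at $d=2$.
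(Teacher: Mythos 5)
Your proposal is correct and matches the paper's own (implicit) argument: the corollary is exactly Theorem~\ref{shaped} and its boxed corollary specialized to $d=2$ with $\rho = D(\lambda)$, where $\mathrm{ch}(i,j) = i+j-1$, and the rectangular case gives the boxed identity. The cross-check via the substitution $x_i \mapsto tq^i$, $y_j \mapsto q^{j-1}$ in Theorem~\ref{mult} is the same specialization the paper uses to prove Theorem~\ref{shaped}, so both of your routes coincide with the paper's.
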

Let us look on the last boxed formula. On the other hand, the following {\it trace generating function} is known for plane partitions (see e.g. \cite[Thm~7.20.1]{sta})
$$
\prod_{i = 1}^{n_1} \prod_{j = 1}^{n_2} 
{\left( 1 - t q^{i + j - 1} \right)^{-1}} 
= \sum_{\pi \in \mathcal{P}^{}(n_1,n_2 \infty)} t^{\mathrm{tr}(\pi)} q^{|\pi|},
$$
where $\mathrm{tr}(\pi) := \sum_{i} \pi_{i,i}$ is the trace of a plane partition. 
Therefore, in this case we actually have the following {\it equidistribution} result.
\begin{theorem}[Equidistribution of (tr, vol) and (cor, ch-vol) for plane partitions]
We have 
$$
\sum_{\pi \in \mathcal{P}^{}(n_1,n_2 \infty)} t^{\mathrm{cor}(\pi)} q^{|\pi|_{ch}} = \sum_{\pi \in \mathcal{P}^{}(n_1,n_2 \infty)} t^{\mathrm{tr}(\pi)} q^{|\pi|}.
$$
\end{theorem}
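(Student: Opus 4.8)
The plan is to route both sides through the same infinite product and then invoke two identities that are already on the table, so that the equidistribution becomes a formal consequence. First I would apply the boxed $d=2$ specialization just obtained (the corollary derived from Theorem~\ref{shaped}) to rewrite the left-hand side as $\prod_{i=1}^{n_1}\prod_{j=1}^{n_2}(1-tq^{i+j-1})^{-1}$. This is immediate: it is exactly the stated boxed formula with $\rho$ taken to be the full $n_1\times n_2$ rectangle, so no new work is required on this side.

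Next I would invoke Stanley's trace generating function \cite[Thm~7.20.1]{sta}, which asserts that the very same product $\prod_{i=1}^{n_1}\prod_{j=1}^{n_2}(1-tq^{i+j-1})^{-1}$ also equals $\sum_{\pi} t^{\mathrm{tr}(\pi)}q^{|\pi|}$, the sum ranging over the same box $\mathcal{P}(n_1,n_2,\infty)$. Comparing the two equalities yields the claimed identity by transitivity. The only point demanding care is to confirm that both sides genuinely range over the identical set of boxed plane partitions and that Stanley's formula is being applied to the same rectangle; since the product is symmetric in $n_1 \leftrightarrow n_2$, a conjugation of the base, should the cited convention differ, is harmless.

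I expect no real obstacle along this route, precisely because both ingredients are already proved or cited: the equidistribution is a formal consequence of two coincident product formulas. The genuinely hard—and more illuminating—task would be to prove the identity bijectively, by producing an explicit bijection $\phi$ on $\mathcal{P}(n_1,n_2,\infty)$ with $\mathrm{cor}(\pi)=\mathrm{tr}(\phi(\pi))$ and $|\pi|_{ch}=|\phi(\pi)|$. That is where the difficulty would lie: the corner count is a local statistic recording the maxima of the stack, whereas the trace is read off the main diagonal, and matching them while simultaneously converting the corner-hook volume into ordinary volume would require a nontrivial combinatorial construction. Since the paper stresses that the two statistics are \emph{not} identical, such a $\phi$ is certainly not the identity map, and I would regard its construction as lying outside the scope of the immediate product-comparison argument.
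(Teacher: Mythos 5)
Your proposal is correct and is essentially identical to the paper's own argument: the paper also equates the boxed $d=2$ corollary of Theorem~\ref{shaped} with Stanley's trace generating function (EC2, Thm.~7.20.1) and concludes by transitivity of the two product formulas. Your closing remark about a bijective proof being a separate, harder matter also matches the paper, which notes that a direct bijective argument exists but is long and deferred elsewhere.
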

\begin{remark}
Up to a variation of the $|\cdot|_{ch}$ statistic, this result was proved by the second author in \cite{dy5}. 
We also have a direct bijective argument for (a stronger version of) this identity which is somewhat long 
 and will be addressed elsewhere. 
\end{remark} 
\begin{remark}
The formulas in Theorem~\ref{shaped} can be viewed as higher-dimensional analogues of the well-known formula  
$$
\sum_{\mathrm{sh}(\pi) = \lambda} q^{|\pi|} = \prod_{(i,j) \in D(\lambda)} \left(1 - q^{h_{\lambda}(i,j)} \right)^{-1},
$$
where $\lambda$ is a (usual) partition, $h_{\lambda}(i,j) = \lambda_i - i + \lambda'_j - j + 1$ are hook lengths, and the sum runs over {\it reverse plane partitions} $\pi$, see \cite[Ch.~7.22]{sta}.
Its combinatorial proof is known as the {Hillman-Grassl correspondence} \cite{hg}. 
\end{remark}

\begin{remark}
There are various enumeration and generating function formulas known for classes of {\it symmetric} plane partitions, see \cite{sta1}. Similarly, one can define classes of symmetries of diagrams for $d$-dimensional partitions. Are there any explicit corner-hook generating functions over symmetric $d$-dimensional partitions as in Theorem~\ref{shaped}? 
\end{remark}


\subsection{Other statistics}
Theorem~\ref{mult} is a source for many statistics over $d$-dimensional partitions, whose generating functions can be computed explicitly by taking appropriate specializations. For instance, another interesting 
statistic $|\cdot|_{c} : \mathcal{P}^{(d)} \to \mathbb{N}$ is given by  
$$
|\pi|_c := \sum_{(i_1,\ldots, i_{d+1}) \in \mathrm{Cor}(\pi)} i_1, \quad \pi \in \mathcal{P}^{(d)}.
$$
Then via the substitution $x^{(1)}_i \to q^i$ and $x^{(k)}_{i} = 1$ for all $k \ge 2$ and $i \ge 1$ we obtain the following generating function
\begin{align*}
\sum_{\pi \in \mathcal{P}^{}(n_1,\ldots, n_{d}, \infty)} q^{|\pi|_c} = \prod_{i = 1}^{n_1} (1 - q^{i})^{-n_2\cdots n_d}.
\end{align*}
 Another curious statistic is given by 
 $$
 |\pi|_p := \sum_{(i_1,\ldots, i_{d+1}) \in \mathrm{Cor}(\pi)} (i_1 + 2 \, i_2 + \ldots + d \, i_d), \quad \pi \in \mathcal{P}^{(d)}
 $$
for which via the substitution $x^{(k)}_i = q^{ki}$ for all $k,i \ge 1$, we obtain the following generating function
$$
\sum_{\pi \in \mathcal{P}^{(d)}} q^{|\pi|_p} = \prod_{n = 1}^{\infty} (1 - q^{n})^{-p(n, d)},
$$
where $p(n, d)$ is the number of integer partitions of $n$ into $d$ distinct parts. 

\section{$d$-dimensional Grothendieck polynomials}\label{sec:ddg}
\subsection{Definitions}
Let $\pi$ be a $d$-dimensional partition. 
Define the set
$$
\mathrm{sh}_1(\pi) := 
\{(i_2,\ldots, i_{d+1}) : (i_1,\ldots, i_{d+1})  \in D(\pi) \}
$$
which can be viewed as a shape of $\pi$ with respect to the first coordinate. Note that if $\pi \in \mathcal{P}(n_1,\ldots, n_{d+1})$, then $\mathrm{sh}_1(\pi)$ is a diagram of $(d-1)$-dimensional partition from $\mathcal{P}(n_2,\ldots, n_{d+1}).$
Alternatively, $\mathrm{sh}_1(\pi)$ is the diagram of the partition $(\pi_{1,i_2,\ldots,i_d})$. 
For example, if $\pi$ is the plane partition in Fig.~\ref{fig0}, then $\mathrm{sh}_1(\pi)$ corresponds to the partition $(4,3,2)$ which is the first row of $\pi$. 

Throughout this section, let us assume that 
we have the sets of variables
$$\mathbf{x}^{(i)} = (x^{(i)}_1,\ldots, x^{(i)}_{n_i}), \quad i \in [d].$$ 

\begin{definition}
Let $\rho$ be a 
$(d-1)$-dimensional partition from the set $\mathcal{P}(n_2,\ldots,n_{d+1})$. Define the {\it $d$-dimensional Grothendieck polynomials} in $d$ sets of variables 
as follows
\begin{align}\label{defg}
g_{\rho}(\mathbf{x}^{(1)}; \ldots; \mathbf{x}^{(d)}) := \sum_{\pi\, :\, \mathrm{sh}_1(\pi) = \rho}\, \prod_{(i_1,\ldots, i_{d+1}) \in \mathrm{Cor}(\pi)} x^{(1)}_{i_1} \cdots x^{(d)}_{i_d},
\end{align}
where the sum runs over $d$-dimensional partitions $\pi \in \mathcal{P}(n_1,\ldots, n_{d+1})$ with $\mathrm{sh}_1(\pi) = \rho$ (here $\rho$ is identified with its diagram). 
\end{definition}


In the specialization $x^{(k)}_{i} = 1$ for all $k \ge 2$, we simply denote these polynomials by $g_{\rho}(\mathbf{x}) = g_{\rho}(x_1,x_2,\ldots)$ in one set of variables $\mathbf{x}^{(1)} = \mathbf{x} = (x_1,\ldots, x_{n_1})$ so that 
\begin{align}\label{specg}
g_{\rho}(\mathbf{x}^{}) = \sum_{\pi\, :\, \mathrm{sh}_1(\pi) = \rho}\, \prod_{i = 1}^{n_1} x_i^{c_i(\pi)},  \text{ where } c_i(\pi) := |\{\mathbf{i} : (i, \mathbf{i}) \in \mathrm{Cor}(\pi) \}|
\end{align}
and the sum runs over $\pi \in \mathcal{P}(n_1,\ldots, n_{d+1})$. 

\subsection{Examples}
\begin{example}\label{ex:d2}
Consider the case $d = 2$. Let $\lambda \in \mathcal{P}(n_2, n_3)$ be a partition and $\mathbf{x}^{(1)} = \mathbf{x}, \mathbf{x}^{(2)} = \mathbf{y}$. Then \eqref{specg} becomes
$$
g_{\lambda}(\mathbf{x}) = \sum_{\pi\, :\, \mathrm{sh}_1(\pi) = \lambda} \prod_{i = 1}^{n_1} x_i^{c_i(\pi)},   \text{ where } c_i(\pi) = |\{(j,k) : (i, j, k) \in \mathrm{Cor}(\pi) \}|
$$
and the sum runs over plane partitions $\pi \in \mathcal{P}(n_1,n_2,n_3)$. One can see that this gives the dual symmetric Grothendieck polynomials defined in \cite{lp} (but phrased in a slightly different yet equivalent form).\footnote{ The polynomials $\{g_{\lambda}\}$ are usually defined using reverse plane partitions, see \cite{lp, dy, dy2}. } More generally, \eqref{defg} becomes
$$
g_{\lambda}(\mathbf{x}; \mathbf{y}) = \sum_{\pi\, :\, \mathrm{sh}_1(\pi) = \lambda} \prod_{(i,j,k) \in \mathrm{Cor}(\pi)} x_i y_j
$$
which gives a generalized version as in \cite{dy5} or by changing $\tilde g_{\lambda}(\mathbf{x}; \mathbf{y}) = \mathbf{y}^{\lambda} g_{\lambda}(\mathbf{x}; \mathbf{y}^{-1})$ the refined version introduced in \cite{ggl}. These polynomials are symmetric in the variables $\mathbf{x}$. 
\end{example}

\begin{example}\label{notsym}
Let $d = 3$, $(n_1,n_2,n_3,n_4) = (3,2,2,2)$, and $\mathbf{x}^{(1)} = \mathbf{x} = (x_1,x_2,x_3)$, $\mathbf{x}^{(2)} = \mathbf{y} = (y_1,y_2)$, $\mathbf{x}^{(3)} = \mathbf{z} = (z_1,z_2)$. Note that in this case, $3$-dimensional Grothendieck polynomials are  indexed by plane partitions and defined as sums over solid partitions. Consider few examples.

(a) Let $\rho =$
\ytableausetup{aligntableaux = center}
{\scriptsize
\begin{ytableau}
 2 & {1}  
\end{ytableau}
}. Then we have 
{\small
    \begin{align*}
        g_\rho(\mathbf{x};\mathbf{y};\mathbf{z}) = (x_{1}^{2}x_{2} &+ x_{1}^{2}x_{3} + x_{1}x_{2}^{2} + x_{1}x_{3}^{2} + x_{2}^{2}x_{3} + x_{2}x_{3}^{2} + 2 x_{1}x_{2}x_{3}) \cdot y_{1}^{3}z_{1}^{2}z_{2}\\
    	 &+ (x_{1}^{2} + x_{2}^{2} + x_{3}^{2} + x_{1}x_{2} + x_{1}x_{3} + x_{2}x_{3}) \cdot y_{1}^{2}z_{1}z_{2}
    \end{align*}
}
which coincides with the ordinary dual Grothendieck polynomial indexed by the partition $\lambda = (2,1)$, i.e. in this case we have $g_{\lambda}(\mathbf{x}) = g_\rho(\mathbf{x}, \mathbf{1}, \mathbf{1})$.


(b) Let $\rho =$ 
\ytableausetup{aligntableaux = center}
{\scriptsize
\begin{ytableau}
 1 & {1} \\
 {1}  \\  
\end{ytableau}
}. 
Then we have 
{\small
\begin{align*}
g_{\rho}(\mathbf{x}; \mathbf{y}; \mathbf{z}) = 
(x_{1}^{2}x_{2}
	 &+ x_{1}^{2}x_{3}
	 + x_{2}^{2}x_{3}) \cdot y_{1}^{2}y_{2}z_{1}^{2}z_{2}
+ 2 x_{1}x_{2}x_{3} \cdot y_{1}^{2}y_{2}z_{1}^{2}z_{2} \\
& 
+ (x_{1}^{2}
	 + x_{2}^{2}
	 + x_{3}^{2} + 2 x_{1}x_{2}
	 + 2 x_{1}x_{3}
	 + 2 x_{2}x_{3}) \cdot y_{1}y_{2}z_{1}z_{2}
\end{align*}
}
and in particular,
{\small
\begin{align*}
g_{\rho}(\mathbf{x}) = 
x_{1}^{2}x_{2}
	 + x_{1}^{2}x_{3}
	 + x_{2}^{2}x_{3} 
+ 2 x_{1}x_{2}x_{3}
+ x_{1}^{2}
	 + x_{2}^{2}
	 + x_{3}^{2}
+ 2 (x_{1}x_{2}
	 + x_{1}x_{3}
	 + x_{2}x_{3}).
\end{align*}
}

(c) Let $\rho = {\scriptsize\ytableaushort{2 1,1}}$. Then we have 
{\small
    \begin{align*}
        g_{\rho}(\mathbf{x},\mathbf{y},\mathbf{z}) &=
        (3 x_{1}^{2}x_{2}x_{3}
    	 + 3 x_{1}x_{2}^{2}x_{3}
    	 + 2 x_{1}x_{2}x_{3}^{2}
    	 + x_{1}^{2}x_{2}^{2}
    	 + x_{1}^{2}x_{3}^{2}
    	 + x_{2}^{2}x_{3}^{2}
    	 + x_{1}^{3}x_{2}
    	 + x_{1}^{3}x_{3}
    	 + x_{2}^{3}x_{3})\cdot y_{1}^{3}y_{2}z_{1}^{3}z_{2}\\
    	 &+ (4 x_{1}x_{2}x_{3}
    	 + 2 x_{1}^{2}x_{2} + 2 x_{1}^{2}x_{3} + 2 x_{2}^{2}x_{3}
    	 + 3 x_{1}x_{2}^{2} + 3 x_{1}x_{3}^{2} + 3 x_{2}x_{3}^{2}
    	 + x_{1}^{3} + x_{2}^{3} + x_{3}^{3})
    	 \cdot y_{1}^{2}y_{2}z_{1}^{2}z_{2}.
    \end{align*}
}
 
 Let us illustrate few examples of solid partitions contributing to the last expansion.
    \begin{figure}[h]
    \begin{tikzpicture}[scale = 0.35]
        \planepartition{{1,1}};
        \oplanepartition{{2}};
        
        \node at (0,0.5) {\scriptsize $2$};
        \node at (0,1.5) {\scriptsize $1$};
        \node at (0.87,0) {\scriptsize $1$};
        
        \draw[->, dashed] (0,2) -- (0,5);
        \node at (0.6,5) {\small $k$};
        
        \draw[->, dashed] (1.7,-1) -- (3.5,-2);
        \node at (3.5, -1) {\small $j$};
        
        \draw[->, dashed] (-0.85,-0.5) -- (-3.5,-2);
        \node at (-3.5, -1.1) {\small $i$};
        \draw[draw=black,fill=cyan, opacity=1]
               (-0.85,0.5) -- (-0.85,1.5)
            -- (-0.425,1.25) -- (-0.425,0.25) -- cycle;
        \draw[draw=black,fill=orange] (0.85,0.5) -- (0.85,1.5) -- (0.425, 1.25) -- (0.425, 0.25) -- cycle;
        
        
        \node[fill=white] at (0, 3) {\scriptsize $\mathrm{sh}_1(\pi) = \rho$};
    \end{tikzpicture}
    \qquad
    \begin{tikzpicture}[scale = 0.35]
        \oplanepartition{
        {2,1},
        {2,1},
        {1,1}};
        \planepartition{
        {1,1},
        {1,1},
        {1,1}};
        
        \node at (0,0.5) {\scriptsize $2$};
        \node at (0,1.5) {\scriptsize $1$};
        \node at (0.87,0) {\scriptsize $1$};
        
        \node at (-0.87,0) {\scriptsize $2$};
        \node at (-0.87,1) {\scriptsize $1$};
        \node at (0,-0.5) {\scriptsize $1$};
        
        \node at (-1.8,-0.5) {\scriptsize $1$};
        \node at (-0.87,-1) {\scriptsize $1$};
        
        \draw[->, dashed] (0,2) -- (0,5);
        \node at (0.6,5) {\small $k$};
        
        \draw[->, dashed] (1.7,-1) -- (3.5,-2);
        \node at (3.5, -1) {\small $j$};
        
        \draw[->, dashed] (-2.7,-1.5) -- (-3.5,-2);
        \node at (-3.5, -1.1) {\small $i$};

        \draw[draw=black,fill=cyan, opacity=1]
               (-1.7,0) -- (-1.7,1)
            -- (-1.275,0.75) -- (-1.275,-0.25) -- cycle;
        \draw[draw=black,fill=orange] (0.85,0.5) -- (0.85,1.5) -- (0.425, 1.25) -- (0.425, 0.25) -- cycle;
        \node[fill=white] at (0, 3) {\scriptsize $\pi^{(1)}$};
    \end{tikzpicture}
    \qquad
    \begin{tikzpicture}[scale = 0.35]
        \oplanepartition{
        {2,1},
        {2,1},
        {2}};
        \planepartition{
        {1,1},
        {1,1},
        {1}};
        
        \node at (0,0.5) {\scriptsize $2$};
        \node at (0,1.5) {\scriptsize $1$};
        \node at (0.85,0) {\scriptsize $1$};
        
        \node at (-0.85,0) {\scriptsize $2$};
        \node at (-0.85,1) {\scriptsize $1$};
        \node at (0,-0.5) {\scriptsize $1$};
        
        \node at (-1.7,-0.5) {\scriptsize $1$};
        \node at (-1.7,0.5) {\scriptsize $1$};
        
        \draw[->, dashed] (0,2) -- (0,5);
        \node at (0.6,5) {\small $k$};
        
        \draw[->, dashed] (1.7,-1) -- (3.5,-2);
        \node at (3.5, -1) {\small $j$};
        
        \draw[->, dashed] (-2.7,-1.5) -- (-3.5,-2);
        \node at (-3.5, -1.1) {\small $i$};
        
        \draw[draw=black,fill=cyan, opacity=1] (-2.6,-0.5) -- (-2.6,0.5)  -- (-2.175,0.25) -- (-2.175,-0.75) -- cycle;
        \draw[draw=black,fill=orange] (0.85,0.5) -- (0.85,1.5) -- (0.425, 1.25) -- (0.425, 0.25) -- cycle;
        
        
        
        \node[fill=white] at (0, 3) {\scriptsize $\pi^{(2)}$};
    \end{tikzpicture}
    \label{fig1}
    \end{figure}
    Each picture here represents a solid partition as a filling of a diagram of some plane partition with numbers written on top of each box (to make entries of inner boxes visible, some facets are removed). 
    On the left, we have $\mathrm{sh}_{1}(\pi) = \rho$. 
    The next two are solid partitions $\pi^{(1)}$ and $\pi^{(2)}$ represented as fillings of diagrams of plane partitions $\mathrm{sh}(\pi^{(1)}) = \scriptsize\ytableaushort{2 1, 2 1, 1 1}$ and $\mathrm{sh}(\pi^{(2)}) = \scriptsize\ytableaushort{2 1, 2 1, 2}$ ; each has the weight $w(\pi^{(i)})  = x_2^2x_3 \cdot y_1^2 y_2 z_1^2 z_2$; and both have the same $\mathrm{sh}_1(\pi^{(i)}) =\rho$ ($i = 1,2$) displayed on the left. 
\end{example}

\subsection{Properties}

We now prove some properties of $d$-dimensional Grothendieck polynomials.

\begin{theorem}[Cauchy-type identity]
Let $\eta \in \mathcal{P}(n_2,\ldots, n_d)$ be a $(d-2)$-dimensional partition. Let $n \times \eta$ be a $(d-1)$-dimensional partition with the diagram $D(n \times \eta) = \{(i, \mathbf{i}) : i \in [n], \mathbf{i} \in D(\eta) \}$. Then we have the following generating series:
$$
\sum_{\rho \in \mathcal{P}(\eta, \infty)} g_{\rho}(\mathbf{x}^{(1)}; \ldots; \mathbf{x}^{(d)}) 
= \prod_{(i_1,\ldots, i_d) \in D(n_1 \times \eta)} \left(1 - x^{(1)}_{i_1} \cdots x^{(d)}_{i_d} \right)^{-1}.
$$
\end{theorem}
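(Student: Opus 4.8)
The plan is to collapse the outer sum over shapes $\rho$ into a single sum over $d$-dimensional partitions $\pi$ and then invoke the fixed-shape identity \eqref{m1} of Theorem~\ref{mult}. Throughout I read the statement with unbounded heights (i.e. $n_{d+1}=\infty$), which is consistent with the index set $\mathcal{P}(\eta,\infty)$ and with the fact that the height coordinate $i_{d+1}$ never enters the weight $w(\pi)$.

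First I would unfold the definition \eqref{defg}: since $g_\rho=\sum_{\pi\,:\,\mathrm{sh}_1(\pi)=\rho} w(\pi)$, summing over all $\rho\in\mathcal{P}(\eta,\infty)$ merely reorganizes one sum,
$$
\sum_{\rho\in\mathcal{P}(\eta,\infty)} g_\rho(\mathbf{x}^{(1)};\ldots;\mathbf{x}^{(d)}) = \sum_{\pi\,:\,\mathrm{sh}(\mathrm{sh}_1(\pi))\subseteq\eta} w(\pi),
$$
the condition being that the support of the first slice $\mathrm{sh}_1(\pi)$ lies inside $D(\eta)$. Indeed, the fibres $\{\pi:\mathrm{sh}_1(\pi)=\rho\}$, as $\rho$ ranges over $\mathcal{P}(\eta,\infty)$, partition the set of $\pi$ with $\mathrm{sh}(\mathrm{sh}_1(\pi))\subseteq\eta$, so no term is lost or double counted.

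The key step is to rewrite this index condition as a single support containment for $\pi$ itself, namely
$$
\mathrm{sh}(\mathrm{sh}_1(\pi))\subseteq\eta \quad\Longleftrightarrow\quad \mathrm{sh}(\pi)\subseteq D(n_1\times\eta).
$$
This is where the defining monotonicity of a $d$-dimensional partition enters: because $\pi_{1,i_2,\ldots,i_d}\ge \pi_{i_1,i_2,\ldots,i_d}$ for every $i_1\ge 1$, the first slice $\mathrm{sh}_1(\pi)=(\pi_{1,i_2,\ldots,i_d})$ has the largest support among all slices in the first direction. Hence $\pi_{i_1,\ldots,i_d}>0$ forces $(i_2,\ldots,i_d)\in\mathrm{sh}(\mathrm{sh}_1(\pi))$, which gives the forward implication; the reverse implication is immediate by taking $i_1=1$. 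Since $D(n_1\times\eta)=[n_1]\times D(\eta)$, this is exactly the containment appearing on the right-hand side.

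Finally I would apply \eqref{m1} of Theorem~\ref{mult} with the fixed shape taken to be $D(n_1\times\eta)$, a legitimate shape of a $d$-dimensional partition since it is the diagram of the $(d-1)$-dimensional partition $n_1\times\eta$, which yields
$$
\sum_{\pi\,:\,\mathrm{sh}(\pi)\subseteq D(n_1\times\eta)} w(\pi) = \prod_{(i_1,\ldots,i_d)\in D(n_1\times\eta)} \left(1-x^{(1)}_{i_1}\cdots x^{(d)}_{i_d}\right)^{-1},
$$
which is the desired right-hand side. The only real obstacle is the support equivalence of the previous paragraph; everything else is formal, and in particular the passage from \eqref{defg} to a single sum is bookkeeping once one checks that the height bound plays no role. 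An alternative, should one wish to avoid quoting Theorem~\ref{mult}, is to push the computation through the bijection $\Phi$ of Theorem~\ref{bij} together with Lemma~\ref{wei}, turning the left-hand side into a sum of $w_A$ over $\mathbb{N}$-matrices supported on $D(n_1\times\eta)$ and evaluating the resulting independent geometric series coordinatewise.
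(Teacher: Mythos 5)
Your proposal is correct and takes essentially the same route as the paper: both collapse the double sum into $\sum_{\pi \in \mathcal{P}(n_1\times\eta,\infty)} w(\pi)$ and then apply identity \eqref{m1} of Theorem~\ref{mult} with the ambient shape $D(n_1\times\eta)$. The only difference is that you spell out the support-containment equivalence (via monotonicity of $\pi$ in the first coordinate) that the paper's proof leaves implicit in its ``notice that'' step.
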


\begin{proof}
Notice that we have
$$
\sum_{\rho \in \mathcal{P}(\eta, \infty)} g_{\rho}(\mathbf{x}^{(1)}; \ldots; \mathbf{x}^{(d)}) = \sum_{\rho \in \mathcal{P}(\eta, \infty)} \sum_{\mathrm{sh}_1(\pi) = \rho} w(\pi) = \sum_{\pi \in \mathcal{P}(n_1 \times \eta, \infty)} w(\pi)
$$
On the other hand, from Theorem~\ref{mult} we have
\begin{align*}
 \sum_{\pi \in \mathcal{P}(n_1 \times \eta, \infty)} w(\pi) = 
 \prod_{(i_1,\ldots, i_d) \in D(n_1 \times \eta)}
 \left(1 - x^{(1)}_{i_1} \cdots x^{(d)}_{i_d} \right)^{-1}
\end{align*}
which gives the result.
\end{proof}

\begin{corollary}\label{gcauchy} 
We have 
$$
\sum_{\rho \in \mathcal{P}(n_2,\ldots, n_d, \infty)} g_{\rho}(\mathbf{x}^{(1)}; \ldots; \mathbf{x}^{(d)}) = \prod_{i_1 = 1}^{n_1} \cdots \prod_{i_d = 1}^{n_d} \left(1 - x^{(1)}_{i_1} \cdots x^{(d)}_{i_d} \right)^{-1}.
$$
\end{corollary}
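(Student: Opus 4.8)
The plan is to obtain this identity as the rectangular (full-box) specialization of the Cauchy-type identity just proved. First I would choose $\eta$ to be the $(d-2)$-dimensional partition whose diagram is the full box $D(\eta) = [n_2] \times \cdots \times [n_d]$; concretely, this is the constant partition with all entries equal to $n_d$ on the base $[n_2] \times \cdots \times [n_{d-1}]$, which is indeed a legitimate $(d-2)$-dimensional partition. With this choice, the set $\mathcal{P}(\eta, \infty)$ of $(d-1)$-dimensional partitions $\rho$ with $\mathrm{sh}(\rho) \subseteq D(\eta)$ and unbounded largest entry becomes exactly $\mathcal{P}(n_2, \ldots, n_d, \infty)$, since the containment $\mathrm{sh}(\rho) \subseteq D(\eta)$ reduces precisely to the boxing constraints defining $\mathcal{P}(n_2, \ldots, n_d, \infty)$. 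Thus the index set of the left-hand sum in the corollary agrees with that of the theorem.

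Next I would identify the product on the right. By definition $D(n_1 \times \eta) = \{(i, \mathbf{i}) : i \in [n_1],\ \mathbf{i} \in D(\eta)\}$, and since $D(\eta)$ is the full box $[n_2] \times \cdots \times [n_d]$, this equals $[n_1] \times [n_2] \times \cdots \times [n_d]$. Hence the product $\prod_{(i_1, \ldots, i_d) \in D(n_1 \times \eta)} (1 - x^{(1)}_{i_1} \cdots x^{(d)}_{i_d})^{-1}$ factors as the iterated product $\prod_{i_1=1}^{n_1} \cdots \prod_{i_d=1}^{n_d} (1 - x^{(1)}_{i_1} \cdots x^{(d)}_{i_d})^{-1}$, which is precisely the right-hand side of the corollary.

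The only real work here is the bookkeeping: tracking the dimension counts (recall $\rho$ is a $(d-1)$-dimensional partition while $\eta$ is $(d-2)$-dimensional) and confirming that the full-box $\eta$ translates the abstract shape constraint of the theorem into the rectangular boxing conditions of the corollary. There is no genuine obstacle — once these identifications are in place, the corollary follows by direct substitution of the full-box $\eta$ into the Cauchy-type identity, with no further computation required.
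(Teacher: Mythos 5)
Your proof is correct and is exactly the derivation the paper intends: the corollary is stated as an immediate specialization of the Cauchy-type identity, obtained by taking $\eta$ to be the full-box $(d-2)$-dimensional partition with $D(\eta) = [n_2] \times \cdots \times [n_d]$, so that $\mathcal{P}(\eta,\infty) = \mathcal{P}(n_2,\ldots,n_d,\infty)$ and $D(n_1 \times \eta) = [n_1] \times \cdots \times [n_d]$. Your bookkeeping of the dimension shifts and the identification of the index set and product are all accurate.
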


\begin{lemma}[Simple branching rule]\label{branch}
We have 
$$
g_{\pi}(1, x_1,\ldots, x_n) = \sum_{\rho \subseteq \pi} g_{\rho}(x_1,\ldots, x_n).
$$
\end{lemma}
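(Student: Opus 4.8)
The plan is to prove the identity by a weight-preserving bijection that peels off the first coordinate slice $i_1 = 1$ of the $d$-dimensional partitions summed in $g_\pi$. First I would unfold the left-hand side using \eqref{specg}: since $g_\pi(1, x_1, \ldots, x_n)$ is the evaluation of the $(n+1)$-variable polynomial $g_\pi$ at $(x_1', \ldots, x_{n+1}') = (1, x_1, \ldots, x_n)$, it equals $\sum_\sigma \prod_{i=1}^{n} x_i^{\,c_{i+1}(\sigma)}$, where $\sigma$ ranges over $d$-dimensional partitions in $\mathcal{P}(n+1, n_2, \ldots, n_{d+1})$ with $\mathrm{sh}_1(\sigma) = \pi$. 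Setting the first variable to $1$ exactly discards the corners of $\sigma$ lying in the slice $i_1 = 1$, which is the combinatorial content that makes the peeling work.

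Next I would define the peeling map $\sigma \mapsto \tau$, where $\tau$ is obtained from $\sigma$ by deleting the slice $i_1 = 1$ and shifting the first coordinate down by one; on diagrams this is the relation $(a, \mathbf{j}) \in D(\tau) \iff (a+1, \mathbf{j}) \in D(\sigma)$. Writing $\rho := \mathrm{sh}_1(\tau)$, the first claim is that $\rho \subseteq \pi$: indeed $\rho$ is the second slice of $\sigma$, which is dominated entrywise by the first slice $\mathrm{sh}_1(\sigma) = \pi$ since $\sigma$ decreases weakly in its first coordinate, and entrywise domination of $(d-1)$-dimensional partitions is precisely diagram containment. The second, central claim is that corners of $\sigma$ with first coordinate $\ge 2$ correspond bijectively, via the shift $i_1 \mapsto i_1 - 1$, to the corners of $\tau$; granting this, $c_{i+1}(\sigma) = c_i(\tau)$ for every $i$, so the weight of $\sigma$ equals $\prod_{i=1}^{n} x_i^{\,c_i(\tau)}$, which is exactly the monomial $\tau$ contributes to $g_\rho(x_1, \ldots, x_n)$.

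To complete the bijection I would describe the inverse: given any $\rho \subseteq \pi$ and any $\tau \in \mathcal{P}(n, n_2, \ldots, n_{d+1})$ with $\mathrm{sh}_1(\tau) = \rho$, reconstruct $\sigma$ by placing a fresh first slice equal to $\pi$ on top of the upward shift of $\tau$ (so $\sigma$'s slice $i_1 = 1$ is $\pi$ and its slices $i_1 \ge 2$ are those of $\tau$). The only monotonicity condition to verify is between the first and second slices, namely $\pi \ge \rho$ entrywise, which again follows from $\rho \subseteq \pi$. Hence $\sigma$ is a genuine $d$-dimensional partition with $\mathrm{sh}_1(\sigma) = \pi$, and the two maps are mutually inverse. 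Summing the weight identity over all $\rho \subseteq \pi$ regroups the contributions as $\sum_{\rho \subseteq \pi} g_\rho(x_1, \ldots, x_n)$, which is the assertion.

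I expect the main obstacle to be the corner correspondence of the second paragraph, since it must be checked across the boundary between the discarded slice $i_1 = 1$ and the retained slice $i_1 = 2$. Concretely, one verifies that a box $\mathbf{b}$ with first coordinate $\ge 2$ satisfies $\mathbf{b} + \mathbf{e}_\ell \notin D(\sigma)$ for all $\ell \in [d]$ if and only if its shift satisfies the analogous condition in $D(\tau)$: the directions $\ell \ge 2$ are immediate from the shift relation, while $\ell = 1$ requires tracking that relation through one extra step. Everything else — the matching of box constraints ($i_1 \in [n+1]$ before peeling, $i_1 \in [n]$ after) and of the exponents — is routine bookkeeping.
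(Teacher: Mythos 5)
Your proposal is correct and is essentially the paper's own proof: the paper uses exactly this peeling map $D(\tau) \mapsto \{(i,\mathbf{i}) : (i+1,\mathbf{i}) \in D(\tau)\}$ (delete the slice $i_1=1$, shift down), with the same weight identity $c_{i+1}(\sigma) = c_i(\tau)$ and the same inverse of restoring $\pi$ as the top slice. The only difference is that the paper leaves the corner correspondence and the containment $\rho \subseteq \pi$ as ``not difficult to see,'' whereas you verify them explicitly.
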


\begin{proof}
Given a plane partition $\tau$ with $\mathrm{sh}_1(\tau) = \pi$, it contributes to the l.h.s. the weight $\prod_{i = 1}^{n} x_i^{c_{i+1}(\tau)}$ (see eq. \eqref{specg}). Let us form the new partition $\rho \subseteq \pi$ with the diagram 
$$\{(i, \mathbf{i}) : (i+1, \mathbf{i}) \in D(\tau) 
\}$$ 
so that $\prod_{i = 1}^{n} x_i^{c_{i+1}(\tau)} = \prod_{i = 1}^{n} x_i^{c_{i}(\rho)}$ which contributes to the r.h.s. In other words, remove from $D(\tau)$ the points with the first coordinate $1$, then decrease by $1$ the first coordinates for the remaining points. It is not difficult to see that this defines a proper weight-preserving bijection between both sides of the equation.
\end{proof}

Denote $1^k = (1,\ldots, 1)$ with $k$ ones. 

\begin{proposition}[Boxed specialization]\label{boxedspec}
We have 
$$
g_{[n_2] \times \cdots \times [n_{d+1}]}(1^{n_1 + 1}) = g_{[n_2] \times \cdots \times [n_{d+1}]}(1^{n_1 + 1}; 1^{n_2}; \ldots; 1^{n_d}) = |\mathcal{P}(n_1,\ldots, n_{d+1})|.
$$
\end{proposition}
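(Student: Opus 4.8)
The plan is to unwind the all-ones specialization directly and then exhibit an explicit slice-deleting bijection. First I would dispose of the first equality, which is purely notational: by the convention fixed right after \eqref{specg}, the one-variable-set symbol $g_\rho(\mathbf{x})$ abbreviates $g_\rho(\mathbf{x}; 1; \ldots; 1)$, i.e. the setting $x^{(k)}_i = 1$ for all $k \ge 2$. Hence $g_{[n_2] \times \cdots \times [n_{d+1}]}(1^{n_1+1})$ is literally $g_{[n_2] \times \cdots \times [n_{d+1}]}(1^{n_1+1}; 1^{n_2}; \ldots; 1^{n_d})$, and everything reduces to the second equality.

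For the second equality I would read the specialization straight off the defining sum \eqref{defg}. Setting every variable equal to $1$ turns each monomial $\prod_{(i_1,\ldots,i_{d+1})\in\mathrm{Cor}(\pi)} x^{(1)}_{i_1}\cdots x^{(d)}_{i_d}$ into $1$, so $g_\rho$ simply counts the partitions indexed by the sum. Since $\mathbf{x}^{(1)} = 1^{n_1+1}$ carries $n_1+1$ variables, the first coordinate now ranges over $[n_1+1]$, so the relevant partitions lie in $\mathcal{P}(n_1+1, n_2, \ldots, n_{d+1})$. This gives
$$
g_{[n_2] \times \cdots \times [n_{d+1}]}(1^{n_1+1}) = \left| \{ \pi \in \mathcal{P}(n_1+1, n_2, \ldots, n_{d+1}) : \mathrm{sh}_1(\pi) = [n_2] \times \cdots \times [n_{d+1}] \} \right|.
$$

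The key step is then to analyze the shape constraint and delete a forced slice. Since $\mathrm{sh}_1(\pi)$ is the diagram of the $(d-1)$-dimensional partition $(\pi_{1, i_2, \ldots, i_d})$, demanding it be the full box forces $\pi_{1, i_2, \ldots, i_d} = n_{d+1}$ for every $(i_2, \ldots, i_d) \in [n_2] \times \cdots \times [n_d]$; that is, the slice of $\pi$ at $i_1 = 1$ is completely filled. I would then map such a $\pi$ to $\pi'$ given by $\pi'_{i_1, i_2, \ldots, i_d} := \pi_{i_1+1, i_2, \ldots, i_d}$, i.e. delete this top slice and shift the first coordinate down by one. Because the deleted slice is maximal it imposes no extra condition on the remaining slices, so this is a bijection onto $\mathcal{P}(n_1, n_2, \ldots, n_{d+1})$; the inverse prepends a constant slice of height $n_{d+1}$, which automatically dominates the next slice and hence preserves monotonicity. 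This yields $g_{[n_2] \times \cdots \times [n_{d+1}]}(1^{n_1+1}) = |\mathcal{P}(n_1, \ldots, n_{d+1})|$, as claimed.

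I do not expect a genuine obstacle here; the only delicate point is the off-by-one bookkeeping of the number of variables—the extra $+1$ in $1^{n_1+1}$ is precisely what supplies the deleted full slice—together with the routine check that prepending a full slice respects all partition inequalities. As an independent cross-check one could instead iterate the branching rule of Lemma~\ref{branch}: from $g_\pi(1^{k+1}) = \sum_{\rho \subseteq \pi} g_\rho(1^k)$ with base value $g_\pi(1^1) = 1$, the quantity $g_{[n_2]\times\cdots\times[n_{d+1}]}(1^{n_1+1})$ becomes the number of chains $\rho^{(1)} \subseteq \cdots \subseteq \rho^{(n_1)}$ of subshapes of the box. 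Such chains are in bijection with weakly decreasing fillings of the box by $\{0, \ldots, n_1\}$, i.e. with $\mathcal{P}(n_2, \ldots, n_{d+1}, n_1)$, and invariance of the boxed count under permuting the box dimensions recovers $|\mathcal{P}(n_1, \ldots, n_{d+1})|$.
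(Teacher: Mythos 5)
Your proof is correct, but it is organized differently from the paper's. Writing $B=[n_2]\times\cdots\times[n_{d+1}]$, the paper argues in two steps: (i) from the definition, $g_\rho(1^{n_1};1^{n_2};\ldots;1^{n_d})$ counts the partitions $\pi\in\mathcal{P}(n_1,\ldots,n_{d+1})$ with $\mathrm{sh}_1(\pi)=\rho$, and (ii) one application of the branching rule (Lemma~\ref{branch}) gives $g_B(1^{n_1+1};1^{n_2};\ldots;1^{n_d})=\sum_{\rho\subseteq B}g_\rho(1^{n_1};1^{n_2};\ldots;1^{n_d})$, which sums to $|\mathcal{P}(n_1,\ldots,n_{d+1})|$ since every boxed partition has a unique $\mathrm{sh}_1$. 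You instead work directly with the defining sum for $g_B(1^{n_1+1})$ itself: partitions in the enlarged box $\mathcal{P}(n_1+1,n_2,\ldots,n_{d+1})$ whose first slice is forced to be completely full, which you then delete by an explicit bijection. The two arguments share the same combinatorial core --- your slice-deletion/shift map is precisely the map used in the paper's proof of Lemma~\ref{branch} --- so neither is deeper than the other. What your version buys is self-containedness and a transparent explanation of the extra $+1$ in $1^{n_1+1}$ (it supplies the forced full slice); what the paper's version buys is brevity and reuse of the lemma, and it never needs to observe that the top slice is full, since it simply sums over the possible shapes $\rho\subseteq B$ of the remaining slices. Your iterated-branching cross-check is also sound, but note it relies on the additional (standard, though not proved in the paper) fact that $|\mathcal{P}(n_1,\ldots,n_{d+1})|$ is invariant under permuting the box dimensions, which the paper's single application of the branching rule avoids entirely.
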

\begin{proof}

Denote $B = [n_2] \times \cdots \times [n_{d+1}]$. Let $\rho$ be a partition diagram inside $B$. From the definition of $g$ we immediately obtain that
$$g_{\rho}(1^{n_1}; \ldots; 1^{n_d}) = |\{\pi \in \mathcal{P}(n_1,\ldots, n_{d+1})\, :\, \mathrm{sh}_1(\pi) = \rho \}|.$$
Therefore, using the branching formula above we get
\begin{align*}
g_{B}(1^{n_1 + 1}; 1^{n_2}; \ldots; 1^{n_d}) 
&= \sum_{\rho \subseteq B} g_{\rho}(1^{n_1}; 1^{n_2}; \ldots; 1^{n_d}) \\
&= \sum_{\rho \subseteq B} |\{\pi \in \mathcal{P}(n_1,\ldots, n_{d+1})\, :\, \mathrm{sh}_1(\pi) = \rho \}| \\
&=|\mathcal{P}(n_1,\ldots, n_{d+1})|
\end{align*}
which gives the needed. 
\end{proof}

\subsection{Quasisymmetry} It is known that the dual Grothendieck polynomials $g_{\lambda}(\mathbf{x})$ are symmetric in $\mathbf{x}$ (in the case $d = 2$). As Example~\ref{notsym} shows, the generalized polynomials $g_{\rho}$ are not necessarily symmetric for $d \ge 3$. However, as we show in this subsection, these polynomials are always quasisymmetric. 

\begin{definition}
A polynomial $f \in \mathbb{Z}[x_1,\ldots, x_n]$ is called {\it quasisymmetric} if for all $1 \le \ell_1 < \cdots < \ell_k \le n$, $1 \le j_1 < \cdots < j_k \le n$, and $a_1,\ldots, a_k \in \mathbb{Z}_+$ we have 
$$
[x^{a_1}_{\ell_1} \cdots x^{a_{k}}_{\ell_k}]\, f = [x^{a_1}_{j_1} \cdots x^{a_{k}}_{j_k}]\, f,
$$
where $[\mathbf{x}^{\alpha}] f$ denotes the coefficient of the monomial $\mathbf{x}^{\alpha}$ in $f$. 
\end{definition}

\begin{theorem}\label{qsym}
We have: $g_{\rho}(\mathbf{x}^{(1)}; \ldots ;\mathbf{x}^{(d)})$ is quasisymmetric in the variables $\mathbf{x}^{(1)}$. 
\end{theorem}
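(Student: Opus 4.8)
The plan is to reduce quasisymmetry to a local condition on adjacent variables and then realize that condition by an explicit involution built from the slice structure of $\pi$. Viewing $g_{\rho}$ as a polynomial in $\mathbf{x}^{(1)}$ with coefficients in $\mathbb{Z}[\mathbf{x}^{(2)},\ldots,\mathbf{x}^{(d)}]$, I would first record the standard criterion: a polynomial $f$ in $x_1,\ldots,x_n$ (over any coefficient ring) is quasisymmetric if and only if for every $i$ one has $[m]f=[s_i m]f$ for every monomial $m$ \emph{not} divisible by $x_ix_{i+1}$, where $s_i$ swaps $x_i$ and $x_{i+1}$. The forward direction is immediate since then $m$ and $s_im$ have the same underlying composition; for the converse it suffices to see that the coefficient of $x_{\ell_1}^{a_1}\cdots x_{\ell_k}^{a_k}$ is unchanged when a single index $\ell_t$ is replaced by $\ell_t+1\notin\{\ell_1,\ldots,\ell_k\}$ (an instance of the displayed equality), and iterating such single-block shifts transports any increasing index sequence to any other with the same exponent vector. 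Writing $c_i(\pi)$ for the exponent of $x^{(1)}_i$, i.e. the number of corners of $\pi$ with first coordinate $i$ as in \eqref{specg}, this reduces Theorem~\ref{qsym} to producing, for each $i$, a weight-preserving bijection on the set of $\pi$ with $\mathrm{sh}_1(\pi)=\rho$ and $c_i(\pi)\,c_{i+1}(\pi)=0$ that interchanges $c_i$ and $c_{i+1}$ while fixing every other $c_{i'}$ and the entire monomial in $\mathbf{x}^{(2)},\ldots,\mathbf{x}^{(d)}$.

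Next I would pass to the slice description. Writing $\nu^{(i)}=(\pi_{i,i_2,\ldots,i_d})$ for the $i$-th slice of $\pi$ in the first coordinate (a $(d-1)$-dimensional partition), the condition $\mathrm{sh}_1(\pi)=\rho$ fixes $\nu^{(1)}=\rho$ and presents $\pi$ as the weakly decreasing chain $\rho=\nu^{(1)}\ge\nu^{(2)}\ge\cdots$. The engine of the proof is the identity coming from the description of $\Phi^{-1}$ in \eqref{rec1} and Lemma~\ref{wei},
\[
c_i(\pi)=\sum_{\mathbf{j}} a_{i,\mathbf{j}},\qquad a_{i,\mathbf{j}}=\pi_{(i,\mathbf{j})}-\max_{\ell\in[d]}\pi_{(i,\mathbf{j})+\mathbf{e}_\ell}\ \ge 0,
\]
where $\mathbf{j}=(i_2,\ldots,i_d)$. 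Since the summands are nonnegative, $c_i(\pi)=0$ is equivalent to the vanishing of the whole $i$-th slice of $A=\Phi^{-1}(\pi)$, and I would upgrade this to the key equivalence $c_i(\pi)=0\iff\nu^{(i)}=\nu^{(i+1)}$. The direction $(\Leftarrow)$ is a one-line check using monotonicity within a slice; for $(\Rightarrow)$ I would argue by descending induction on $\mathbf{j}$ along the slice: if all $a_{i,\mathbf{j}}=0$ then $\nu^{(i)}_{\mathbf{j}}=\nu^{(i+1)}_{\mathbf{j}}$ on the outer boundary, and the inductive step collapses the maximum onto $\nu^{(i+1)}_{\mathbf{j}}$ because $\nu^{(i+1)}$ is itself a partition.

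Finally I would build the bijection at position $i$. On the regime $c_ic_{i+1}=0$ the equivalence says that either $\nu^{(i)}=\nu^{(i+1)}$ or $\nu^{(i+1)}=\nu^{(i+2)}$, so the chain carries a repeated value adjacent to position $i+1$; the involution simply moves this repeat across position $i+1$, replacing the value at position $i+1$ by $\nu^{(i)}$ in the second case and by $\nu^{(i+2)}$ in the first (and doing nothing when $\nu^{(i)}=\nu^{(i+1)}=\nu^{(i+2)}$). This alters only the slice at position $i+1$, hence keeps the chain weakly decreasing, leaves $\nu^{(1)}=\rho$ untouched (as $i+1\ge 2$), and fixes $c_{i'}$ for $i'\notin\{i,i+1\}$. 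Because the modified slice now coincides with a neighbour, the formula for $a_{i,\mathbf{j}}$ gives $a'_{i+1,\mathbf{j}}=a_{i,\mathbf{j}}$ and $a'_{i,\mathbf{j}}=0$ (or the mirror image), so the multiset of corners is preserved with only the first coordinate relabelled $i\leftrightarrow i+1$; this simultaneously swaps $c_i$ and $c_{i+1}$ and preserves the whole $\mathbf{x}^{(2)},\ldots,\mathbf{x}^{(d)}$ weight. The two operations are mutually inverse, giving the desired bijection.

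The main obstacle is precisely the refined weight-preservation in the last step: interchanging the totals $c_i,c_{i+1}$ is not enough, and one must verify that the corners are relabelled \emph{coordinatewise} in $(i_2,\ldots,i_d)$, which is exactly where the hypothesis $c_ic_{i+1}=0$ enters through the coincidence of a neighbouring slice and the formula for $a_{i,\mathbf{j}}$. This computation also explains why one obtains only quasisymmetry: when $c_i,c_{i+1}>0$ no neighbouring slices coincide, the naive move either breaks the chain or destroys the corner bookkeeping, and Example~\ref{notsym} indeed exhibits a $g_{\rho}$ that is not symmetric.
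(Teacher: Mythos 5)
Your proof is correct, but it takes a genuinely different route from the paper's. The paper proves the coefficient identity for two arbitrary index sequences $(\ell_1<\cdots<\ell_k)$ and $(j_1<\cdots<j_k)$ in one global step: it pulls $\pi$ back to $A=\Phi^{-1}(\pi)$, notes via Lemma~\ref{wei} that the nonzero slices $A^{(\ell)}$ sit exactly at $\ell\in\{\ell_1,\ldots,\ell_k\}$, relocates those slices to the positions $j_1,\ldots,j_k$ to form a matrix $B$, and sets $\pi'=\Phi(B)$; the crux there is that $\mathrm{sh}_1$ is unchanged because each entry $\pi_{1,i_2,\ldots,i_d}$ is a maximum of directed path sums through $A$, which depends only on the ordered list of nonzero slices and not on their positions. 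You instead (i) reduce quasisymmetry to the standard adjacent-swap criterion (coefficients of $m$ and $s_im$ agree whenever $m$ is not divisible by $x_ix_{i+1}$), and (ii) realize each adjacent swap by an involution defined directly on partitions, whose engine is the characterization $c_i(\pi)=0\iff\nu^{(i)}=\nu^{(i+1)}$, so that a repeated slice adjacent to position $i+1$ can be moved across it. In matrix terms your move is the special case of the paper's relocation in which a zero slice is exchanged with an adjacent one, but your justification is different: you never invoke the last-passage path-invariance argument, nor strictly the bijectivity of $\Phi$ from Theorem~\ref{bij}, only the corner-count formula of Lemma~\ref{wei}; and preservation of $\mathrm{sh}_1$ is immediate since slice $1$ is never touched. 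The paper's argument buys brevity given the machinery already in place and a single global bijection; yours buys a more elementary, self-contained involution together with a transparent explanation of why one gets quasisymmetry and nothing more --- when $c_i,c_{i+1}>0$ there is no repeated adjacent slice to move, consistent with Example~\ref{notsym}. The steps you flag as delicate do check out: the descending induction for the implication $c_i(\pi)=0\Rightarrow\nu^{(i)}=\nu^{(i+1)}$ uses exactly that $\nu^{(i+1)}$ is itself a partition; the modified chain stays weakly decreasing because the inserted slice is sandwiched between its neighbours; and the computation $a'_{i,\mathbf{j}}=a_{i+1,\mathbf{j}}$ (respectively its mirror $a'_{i+1,\mathbf{j}}=a_{i,\mathbf{j}}$) shows the corners are relabelled coordinatewise in $(i_2,\ldots,i_d)$, so the monomial in $\mathbf{x}^{(2)},\ldots,\mathbf{x}^{(d)}$ is preserved exactly, which is the refined weight-preservation the adjacent-swap reduction requires.
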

\begin{proof}
To simplify notation let us denote $\mathbf{x}^{(1)} = \mathbf{x} = (x_1, x_2,\ldots)$. We need to show that for all $a_1,\ldots, a_k \in \mathbb{Z}_+$, $1 \le \ell_1 < \cdots < \ell_k \le n_1$, $1 \le j_1 < \cdots < j_k \le n_1$ we have 
$$
[x^{a_1}_{\ell_1} \cdots x^{a_{k}}_{\ell_k}]\, g_{\rho} = [x^{a_1}_{j_1} \cdots x^{a_{k}}_{j_k}]\, g_{\rho}.
$$
Let $L$ and $R$ be the sets of $d$-dimensional partitions which contribute to the l.h.s. and r.h.s. respectively. We are going to construct a weight-preserving bijection $\phi : L \to R$. 

Let $\pi \in L$ for which we have $\pi \in \mathcal{P}(n_1,\ldots,n_{d+1})$ with $\mathrm{sh}_1(\pi) = D(\rho)$ and $w(\pi) = x^{a_1}_{\ell_1} \cdots x^{a_{k}}_{\ell_k} \times w'$, where $w'$ is the the remaining product which does not contain the variables $\mathbf{x}$. 

For a matrix $X = ({x}_{\mathbf{i}})_{\mathbf{i} \in \mathbb{Z}^{d}_+}$, define the submatrices $X^{(\ell)} = (x_{\ell,\mathbf{i}})_{\mathbf{i} \in \mathbf{Z}^{d-1}_+}$. Let $|X|$ denotes the sum of the entries of $X$. 

Let $A = (a_{\mathbf{i}}) = \Phi^{-1}(\pi) \in \mathcal{M}(n_1,\ldots, n_d)$. Note that $A^{(\ell)}  
\in \mathcal{M}(n_2,\ldots, n_d)$ for $\ell \in [n_1]$. Since $\Phi$ preserves weights, i.e. $w_A = w(\pi)$ (see Lemma~\ref{wei}) we must have $A^{(\ell)} \ne \mathbf{0}$ iff $\ell \in \{\ell_1,\ldots, \ell_k \}$. We then have 
$$
w(\pi) = w_A = \prod_{i = 1}^{n_1} x_i^{|A^{(i)}|} \prod_{\mathbf{i} = (i_2,\ldots, i_d)} (x^{(2)}_{i_2} \cdots x^{(d)}_{i_d})^{a_{i,\mathbf{i}}} = \prod_{i = 1}^{k} x_{\ell_i}^{a_i} \times w'.
$$

Let us now construct another matrix $B = (b_{\mathbf{i}}) \in \mathcal{M}(n_1,\ldots, n_d)$ so that $B^{(j)} \ne \mathbf{0}$ iff $j \in \{j_1,\ldots,j_k\}$ and $B^{(j_i)} = A^{(\ell_i)}$ for all $i \in [k]$. Let $\pi' = \Phi(B)$. We then clearly have 
$$
w(\pi') = w_B = \prod_{i = 1}^{k} x_{j_i}^{a_i} \times w'.
$$

Let us show that $\mathrm{sh}_1(\pi') = D(\rho) = \mathrm{sh}_1(\pi)$. Recall that $\mathrm{sh}_1(\pi')$ is the diagram of the partition $(\pi'_{1,i_2,\ldots,i_d})$. By definition of $\Phi$, each entry $\pi'_{1,i_2,\ldots,i_d} $ is the largest weight directed path from $(1,i_2,\ldots, i_d)$ to $(n_1,\ldots, n_d)$ through the matrix $B$. Similarly, each entry $\pi_{1,i_2,\ldots,i_d} $ is the largest weight directed path from $(1,i_2,\ldots, i_d)$ to $(n_1,\ldots, n_d)$ through the matrix $A$. We then have 
\begin{align*}
\pi_{1,i_2,\ldots,i_d}  &= \max_{\Pi : (1,i_2,\ldots, i_d) \to (n_1,\ldots, n_d)} \sum_{(\ell,\mathbf{i}) \in \Pi, \, \ell \in \{\ell_1,\ldots, \ell_k \}} a_{\mathbf{j}} \\
&= \max_{\Pi : (1,i_2,\ldots, i_d) \to (n_1,\ldots, n_d)} \sum_{(j,\mathbf{i}) \in \Pi, \, j \in \{j_1,\ldots, j_k \}} b_{\mathbf{j}} \\
&= \pi'_{1,i_2,\ldots,i_d}.
\end{align*}
Hence $\pi' \in R$, we can set $\phi : \pi \mapsto \pi'$ and 
it is a well-defined bijection between $L$ and $R$.
\end{proof}

Let us define the {\it boxed} polynomials
$$
F_{(n_1,\ldots, n_{d+ 1})}(\mathbf{x}^{(1)}; \ldots; \mathbf{x}^{(d)}) := \sum_{\pi \in \mathcal{P}(n_1,\ldots, n_{d+1})} \prod_{(i_1,\ldots, i_{d+1}) \in \mathrm{Cor}(\pi)} x^{(1)}_{i_1} \cdots x^{(d)}_{i_d}, 
$$
which is a bounded version of the Cauchy product as by Theorem~\ref{mult} we have 
$$
\lim_{n_{d+1} \to \infty} F_{(n_1,\ldots, n_{d+ 1})}(\mathbf{x}^{(1)}; \ldots; \mathbf{x}^{(d)}) =  \prod_{i_1 = 1}^{n_1} \cdots \prod_{i_d = 1}^{n_d} \left(1 - x^{(1)}_{i_1} \cdots x^{(d)}_{i_d} \right)^{-1}.
$$
These polynomials can also be expanded as follows:
$$
F_{(n_1,\ldots, n_{d+ 1})}(\mathbf{x}^{(1)}; \ldots; \mathbf{x}^{(d)}) = \sum_{\rho \in \mathcal{P}(n_2,\ldots,n_{d+1})} \sum_{\mathrm{sh}_{1}(\pi) = \rho} w(\pi) = \sum_{\rho \in \mathcal{P}(n_2,\ldots,n_{d+1})} g_{\rho}(\mathbf{x}^{(1)}; \ldots; \mathbf{x}^{(d)}).
$$
\begin{corollary}[Full quasisymmetry of boxed polynomials]
We have: 
$
F_{(n_1,\ldots, n_{d+ 1})} 
$
is quasisymmetric in each set of the variables $\mathbf{x}^{(1)}; \ldots; \mathbf{x}^{(d)}$ independently. 
\end{corollary}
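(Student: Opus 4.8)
The plan is to reduce the full statement to the single-variable-set case already handled in Theorem~\ref{qsym}, and then to exploit the symmetry of the whole construction under permuting the first $d$ coordinate axes. Concretely, I will first establish quasisymmetry in $\mathbf{x}^{(1)}$ directly from the expansion $F_{(n_1,\ldots,n_{d+1})} = \sum_{\rho} g_{\rho}$ recorded just above, and then transfer this to an arbitrary variable set $\mathbf{x}^{(k)}$ by a coordinate-permutation argument.

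For the first step, recall the expansion $F_{(n_1,\ldots,n_{d+1})}(\mathbf{x}^{(1)};\ldots;\mathbf{x}^{(d)}) = \sum_{\rho \in \mathcal{P}(n_2,\ldots,n_{d+1})} g_{\rho}(\mathbf{x}^{(1)};\ldots;\mathbf{x}^{(d)})$. This is a finite sum, since $\mathcal{P}(n_2,\ldots,n_{d+1})$ is a finite set of boxed shapes. By Theorem~\ref{qsym} each summand $g_{\rho}$ is quasisymmetric in $\mathbf{x}^{(1)}$. Since the coefficient-extraction operators $[\,\cdot\,]$ appearing in the definition of quasisymmetry are linear, the defining equalities $[x^{a_1}_{\ell_1}\cdots x^{a_k}_{\ell_k}]\,(\cdot) = [x^{a_1}_{j_1}\cdots x^{a_k}_{j_k}]\,(\cdot)$ are preserved under finite sums; hence $F_{(n_1,\ldots,n_{d+1})}$ is quasisymmetric in $\mathbf{x}^{(1)}$.

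For the second step, I will use that the defining order condition of a $d$-dimensional partition, the corner set $\mathrm{Cor}(\cdot)$, and the weight $w(\cdot)$ are all symmetric under permuting the first $d$ coordinate directions. Fix $\sigma \in S_d$ and let $\pi^{\sigma}$ be the partition obtained from $\pi$ by permuting its first $d$ axes according to $\sigma$. Then $\pi \mapsto \pi^{\sigma}$ is a bijection $\mathcal{P}(n_1,\ldots,n_d,n_{d+1}) \to \mathcal{P}(n_{\sigma(1)},\ldots,n_{\sigma(d)},n_{d+1})$ which carries $\mathrm{Cor}(\pi)$ onto $\mathrm{Cor}(\pi^{\sigma})$ by the same permutation of the first $d$ coordinates; this uses only that the condition $\mathbf{i}+\mathbf{e}_{\ell}\notin D(\pi)$ for all $\ell\in[d]$ is invariant under permuting $\mathbf{e}_1,\ldots,\mathbf{e}_d$. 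Tracking the monomial weights, this yields the identity $F_{(n_1,\ldots,n_d,n_{d+1})}(\mathbf{x}^{(1)};\ldots;\mathbf{x}^{(d)}) = F_{(n_{\sigma(1)},\ldots,n_{\sigma(d)},n_{d+1})}(\mathbf{x}^{(\sigma(1))};\ldots;\mathbf{x}^{(\sigma(d))})$. Taking $\sigma = (1\,k)$, the right-hand side is a boxed polynomial whose first variable set is $\mathbf{x}^{(k)}$ (of length $n_k$, matching its first box dimension), and by the first step it is quasisymmetric in that first set. Since the two polynomials are equal, the quasisymmetry of the right-hand side in $\mathbf{x}^{(k)}$ is exactly the quasisymmetry of $F_{(n_1,\ldots,n_{d+1})}$ in $\mathbf{x}^{(k)}$, and this holds for every $k \in [d]$.

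The content of the proof lies entirely in the second step, and even there the only point needing care is the verification that $\mathrm{Cor}$ and $w$ transform as claimed under the axis permutation; this is immediate from the manifest symmetry of their definitions in the first $d$ directions, so I expect no genuine obstacle. An alternative and equally valid route would be to repeat verbatim the slice-rearrangement argument of Theorem~\ref{qsym} with the $k$-th axis playing the role of the first, rearranging the nonzero slices of $A = \Phi^{-1}(\pi)$ taken along direction $k$; but the reduction via symmetry above is cleaner and avoids repeating the last-passage computation.
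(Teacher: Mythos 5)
Your proposal is correct and follows essentially the same route as the paper: quasisymmetry in $\mathbf{x}^{(1)}$ from the finite expansion $F = \sum_{\rho} g_{\rho}$ together with Theorem~\ref{qsym}, then transfer to the other variable sets via the symmetry of $\mathrm{Cor}(\cdot)$ and the weights in the first $d$ coordinates. Your explicit permutation identity $F_{(n_1,\ldots,n_d,n_{d+1})}(\mathbf{x}^{(1)};\ldots;\mathbf{x}^{(d)}) = F_{(n_{\sigma(1)},\ldots,n_{\sigma(d)},n_{d+1})}(\mathbf{x}^{(\sigma(1))};\ldots;\mathbf{x}^{(\sigma(d))})$ is just a precise formalization of what the paper compresses into ``repeat the proof by rotation.''
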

\begin{proof}
The quasisymmetry in $\mathbf{x}^{(1)}$ is immediate from the previous theorem. The same holds for any other set of variables by noting that the definitions of $\mathrm{Cor}(\pi)$ and weights $\pi$ are symmetric in the first $d$ coordinates and hence we may repeat the proof by `rotation', i.e. moving any coordinate as the first one. 
\end{proof}

\begin{definition}
Let $A = (a_{i_1,\ldots,i_d})\in \mathcal{M}^{}(n_1,\ldots, n_d)$. For each $\ell \in [d]$, consider the matrices $B^{(\ell)}_{i} = (a_{i_1,\ldots,i_d})_{i_{\ell} = i}$, i.e. submatrices of $A$ with fixed $\ell$-th coordinate. Define the vectors 
$$s_{\ell}(A) := (|B^{(\ell)}_1|, |B^{(\ell)}_2|, \ldots),$$ where $|B|$ denotes the sum of entries of $B$. For example, if $d = 2$, then $s_{1}(A)$ is the vector of row sums of $A$, and $s_{2}(A)$ is the column sums of $A$. 
Let us also say that $A$ is a {\it packed matrix} if for each $\ell \in [d]$, the sequence $s_{\ell}(A)$ does not contain zeros between its positive entries. Denote by $\mathrm{pack}(A)$ the packed matrix formed from $A$ by removing its zero submatrices $B^{(\ell)}_i = \mathbf{0}$. 
\end{definition}

For a composition $\alpha = (\alpha_1,\ldots, \alpha_k) \in \mathbb{Z}^k_+$, recall the {\it monomial quasisymmetric functions} 
$$
M_{\alpha}(\mathbf{x}) := \sum_{i_1 < \ldots < i_k} x^{\alpha_1}_{i_1} \cdots x^{\alpha_k}_{i_k}.
$$
Note that they form a basis of the algebra of quasisymmetric functions. 

It is easy to see that 
$$
F_{(n_1,\ldots,n_d, \infty)} = \sum_{A \in \mathcal{M}(n_1,\ldots,n_d)} w_A = \sum_{\alpha^{(1)}, \ldots, \alpha^{(d)}} m^{}_{\alpha^{(1)},\ldots,\alpha^{(d)}}\, (\mathbf{x}^{(1)})^{\alpha^{(1)}} \cdots (\mathbf{x}^{(d)})^{\alpha^{(d)}}, 
$$
where $m^{}_{\alpha^{(1)},\ldots,\alpha^{(d)}}$ is the number of $A \in \mathcal{M}(n_1,\ldots, n_d)$ with $s_{\ell}(A) = \alpha^{(\ell)} \in \mathbb{N}^{n_{\ell}}$. The following result is a finite boxed version of this expansion.
\begin{theorem}[Monomial basis expansion of boxed polynomials]
We have 
$$
F_{(n_1,\ldots,n_{d+1})} = \sum_{\alpha^{(1)}, \ldots, \alpha^{(d)}} m^{(n_{d+1})}_{\alpha^{(1)},\ldots,\alpha^{(d)}}\, M_{\alpha^{(1)}}(\mathbf{x}^{(1)}) \cdots M_{\alpha^{(d)}}(\mathbf{x}^{(d)}),
$$
where the sum runs over compositions $\alpha^{(1)}, \ldots, \alpha^{(d)}$ such that $|\alpha^{(i)}| = |\alpha^{(j)}|$ for all $i,j$, and the coefficient $m^{(n_{d+1})}_{\alpha^{(1)},\ldots,\alpha^{(d)}}$ is equal to the number of packed matrices $A \in \mathcal{M}([n_1] \times \cdots \times [n_d], n_{d+1})$ such that $s_{\ell}(A) = \alpha^{(\ell)}$ for all $\ell \in [d]$. 
\end{theorem}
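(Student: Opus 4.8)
The plan is to transport the sum defining $F_{(n_1,\ldots,n_{d+1})}$ to the matrix side and then reorganize it by the packing operation. First I would invoke the bijection $\Phi$ in its boxed specialization (Theorem~\ref{bij}, the case of $\mathcal{M}([n_1]\times\cdots\times[n_d], n_{d+1})$ and $\mathcal{P}(n_1,\ldots,n_{d+1})$) together with the weight identity $w_A = w(\pi)$ of Lemma~\ref{wei} to rewrite
$$
F_{(n_1,\ldots,n_{d+1})} = \sum_{A \in \mathcal{M}([n_1]\times\cdots\times[n_d],\, n_{d+1})} w_A.
$$
The next observation is that $w_A$ depends only on the slab-sum vectors: collecting, for each $\ell \in [d]$ and index $i$, the exponent of $x^{(\ell)}_i$ in $w_A$ gives exactly $|B^{(\ell)}_i| = s_\ell(A)_i$, so that $w_A = \prod_{\ell=1}^d (\mathbf{x}^{(\ell)})^{s_\ell(A)}$. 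In particular $|s_\ell(A)| = |A|$ is independent of $\ell$, which is what will force the constraint $|\alpha^{(i)}| = |\alpha^{(j)}|$ in the final sum.

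The heart of the argument, which I expect to be the main obstacle, is to show that box membership is invariant under packing, i.e. $A \in \mathcal{M}([n_1]\times\cdots\times[n_d], n_{d+1})$ if and only if $\mathrm{pack}(A)$ lies in the analogous smaller box with the same bound $n_{d+1}$. The support and the dimensions are plainly controlled; the only delicate quantity is the largest last passage time $G_{1,\ldots,1}$. I would prove that inserting or deleting an all-zero slab $B^{(\ell)}_i = \mathbf{0}$ leaves $G_{1,\ldots,1}$ unchanged: a directed path through the original array maps to one through the enlarged array by inserting a single zero-weight step $\mathbf{j} \to \mathbf{j} + \mathbf{e}_\ell$ across the new slab, and conversely any directed path in the enlarged array collapses, upon contracting the zero slab, to one of the same weight. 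Since both maps preserve path weights, the maximum $G_{1,\ldots,1}$ is preserved; iterating over the removed slabs gives $G_{1,\ldots,1}(A) = G_{1,\ldots,1}(\mathrm{pack}(A))$.

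With this invariance in hand, I would partition $\mathcal{M}([n_1]\times\cdots\times[n_d], n_{d+1})$ into fibers according to the value $\mathrm{pack}(A) = A_0$. For a fixed packed matrix $A_0$, whose occupied slabs in direction $\ell$ number $k_\ell$ (so $s_\ell(A_0)$ is a composition with $k_\ell$ parts), the matrices $A$ in its fiber are obtained by independently choosing, in each direction $\ell$, an increasing placement $1 \le p^{(\ell)}_1 < \cdots < p^{(\ell)}_{k_\ell} \le n_\ell$ of the occupied slabs among the $n_\ell$ available positions; by the invariance above every such $A$ again lies in the box, and conversely packing any $A$ in the box returns an $A_0$ in the box. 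Writing $\alpha^{(\ell)} = s_\ell(A_0)$, the weight of such an $A$ is $\prod_{\ell} \prod_{m} (x^{(\ell)}_{p^{(\ell)}_m})^{\alpha^{(\ell)}_m}$, so summing over the fiber yields
$$
\sum_{A\, :\, \mathrm{pack}(A) = A_0} w_A = \prod_{\ell=1}^d M_{\alpha^{(\ell)}}(\mathbf{x}^{(\ell)}).
$$
Finally I would regroup the packed matrices $A_0$ by the tuple $(\alpha^{(1)},\ldots,\alpha^{(d)})$ of their slab-sum compositions; since $\prod_{\ell} M_{\alpha^{(\ell)}}$ depends only on this tuple, the resulting coefficient is precisely the number of packed $A_0 \in \mathcal{M}([n_1]\times\cdots\times[n_d], n_{d+1})$ with $s_\ell(A_0) = \alpha^{(\ell)}$ for all $\ell$, namely $m^{(n_{d+1})}_{\alpha^{(1)},\ldots,\alpha^{(d)}}$, and the identity $|\alpha^{(i)}| = |\alpha^{(j)}| = |A_0|$ noted in the first paragraph restricts the sum exactly as stated.
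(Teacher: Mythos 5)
Your proposal is correct and follows essentially the same route as the paper's own proof: transport $F_{(n_1,\ldots,n_{d+1})}$ to the matrix side via $\Phi$ and Lemma~\ref{wei}, partition $\mathcal{M}([n_1]\times\cdots\times[n_d],n_{d+1})$ into fibers of the packing map, show each fiber sums to $M_{\alpha^{(1)}}(\mathbf{x}^{(1)})\cdots M_{\alpha^{(d)}}(\mathbf{x}^{(d)})$, and regroup by the tuple of slab-sum compositions. If anything, your write-up is more complete than the paper's: the paper dismisses the key point with ``by an argument as in Theorem~\ref{qsym}, it is not difficult to obtain,'' whereas you explicitly prove the one delicate fact---that inserting or deleting a zero slab preserves $G_{1,\ldots,1}$, hence box membership is packing-invariant---and you also state the fiber sum correctly as $\prod_{\ell} M_{\alpha^{(\ell)}}(\mathbf{x}^{(\ell)})$ without the coefficient $m^{(n_{d+1})}_{\alpha^{(1)},\ldots,\alpha^{(d)}}$, which the paper's displayed fiber identity erroneously includes (that coefficient should only appear after grouping the packed matrices by their composition tuple, exactly as you do).
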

\begin{proof}
Let $P \in \mathcal{M}([n_1] \times \cdots \times [n_d], n_{d+1})$ be a packed matrix and let $M(P)$ be the set of matrices $A \in \mathcal{M}([n_1] \times \cdots \times [n_d], n_{d+1})$ such that $\mathrm{pack}(A) = P$. Let $s_{\ell}(P) = \alpha^{(\ell)}$. Then (by an argument as in Theorem~\ref{qsym}) it is not difficult to obtain that we have
$$
\sum_{A \in M(P)} w_A = m^{(n_{d+1})}_{\alpha^{(1)},\ldots,\alpha^{(d)}}\, M_{\alpha^{(1)}}(\mathbf{x}^{(1)}) \cdots M_{\alpha^{(d)}}(\mathbf{x}^{(d)}).
$$
Therefore, we obtain 
\begin{align*}
F_{(n_1,\ldots,n_{d+1})} &= \sum_{\pi \in \mathcal{P}(n_1,\ldots, n_{d+1})} w(\pi) \\
&= \sum_{A \in \mathcal{M}([n_1] \times \cdots \times [n_d], n_{d+1})} w_A \\
&= \sum_{P \text{ packed}} \sum_{A \in M(P)} w_A \\
&=  \sum_{\alpha^{(1)}, \ldots, \alpha^{(d)}} m^{(n_{d+1})}_{\alpha^{(1)},\ldots,\alpha^{(d)}}\, M_{\alpha^{(1)}}(\mathbf{x}^{(1)}) \cdots M_{\alpha^{(d)}}(\mathbf{x}^{(d)})
\end{align*}
as needed. 
\end{proof}
\begin{remark}
For $d = 2$, packed matrices appear in the algebra of matrix quasisymmetric functions, see \cite{dht}.  
\end{remark}

\subsection{Dual Grothendieck polynomials, $d = 2$} 
Recall that in this case (see Example~\ref{ex:d2}), we get the following definition of polynomials $g_{\lambda}(\mathbf{x}; \mathbf{y})$ indexed by partitions $\lambda$. We define 
$$
g_{\lambda}(\mathbf{x}; \mathbf{y}) := \sum_{\pi\, :\, \mathrm{sh}_1(\pi) = \lambda} \prod_{(i,j,k) \in \mathrm{Cor}(\pi)} x_i y_j
$$
where the sum runs over plane partitions $\pi$. 
The polynomials $g_{\lambda}(\mathbf{x}; \mathbf{y})$ are generalizations of dual Grothendieck polynomials which correspond to the specialization $g_{\lambda}(\mathbf{x}) = g_{\lambda}(\mathbf{x}; \mathbf{1})$. In fact, $g_{\lambda}(\mathbf{x}; \mathbf{y})$ is symmetric in $\mathbf{x}$. The Cauchy-type identity in Corollary~\ref{gcauchy} becomes
$$
\sum_{\lambda \in \mathcal{P}(n_2, \infty)} g_{\lambda}(\mathbf{x}; \mathbf{y}) = \prod_{i = 1}^{n_1} \prod_{j = 1}^{n_2} \frac{1}{1 - x_i y_j}, 
$$
which was proved in \cite{dy4, dy5}.
The boxed specialization formula in Proposition~\ref{boxedspec} becomes the following 
$$
g_{[n_2] \times [n_3]}(1^{n_1 + 1}) = |\mathcal{P}(n_1, n_2, n_3)|,
$$
the number of plane partitions inside the box $[n_1] \times [n_2] \times [n_3]$, for which there is also the famous {\it MacMahon boxed product formula}
$$
|\mathcal{P}(n_1, n_2, n_3)| = \prod_{i = 1}^{n_1} \prod_{j = 1}^{n_2} \prod_{k = 1}^{n_3} \frac{i + j + k - 1}{i + j + k - 2}. 
$$
Using determinantal formulas for dual Grothendieck polynomials \cite{dy} we also have the following `coincidence' formula (see \cite{dy4,dy5}) connecting them with the Schur polynomials $\{s_{\lambda} \}$ as follows
$$
g_{[n_2] \times [n_3]}(\mathbf{x}) = s_{[n_2] \times [n_3]}(\mathbf{x}, 1^{n_2 - 1}).
$$

\subsection{3d Grothendieck polynomials, $d = 3$} In this case, we get the following definition of polynomials $g_{\rho}(\mathbf{x}; \mathbf{y}; \mathbf{z})$ indexed by plane partitions $\rho$. We define
\begin{align}\label{g3d}
g_{\rho}(\mathbf{x}; \mathbf{y}; \mathbf{z}) := \sum_{\pi\, :\, \mathrm{sh}_1(\pi) = \rho} \prod_{(i,j,k,\ell) \in \mathrm{Cor}(\pi)} x_{i} y_{j} z_{k},
\end{align}
where the sum runs over solid partitions $\pi \in \mathcal{P}(n_1,n_2,n_3,n_4)$. Note also that if $\rho$ satisfies $D(\rho) = \{(1,i,j) : (i,j) \in D(\lambda)\}$ where $\lambda$ is a partition, we then have $g_{\rho}(\mathbf{x}; \mathbf{y}; \mathbf{1}) = g_{\lambda}(\mathbf{x}; \mathbf{y})$ reduces to the 2d case discussed above. The polynomials $g_{\rho}(\mathbf{x}; \mathbf{y}; \mathbf{z})$ are quasisymmetric in $\mathbf{x}$. Then Cauchy-type identity in Corollary~\ref{gcauchy} becomes
$$
\sum_{\rho \in \mathcal{P}(n_2, n_3, \infty)} g_{\rho}(\mathbf{x}^{}; \mathbf{y}; \mathbf{z}) = \prod_{i = 1}^{n_1} \prod_{j = 1}^{n_2} \prod_{k = 1}^{n_3} \left(1 - x^{}_{i} y_j z_k \right)^{-1}.
$$
The boxed specialization formula becomes the following
$$
g_{[n_2] \times [n_3] \times [n_4]}(1^{n_1 + 1}) = |\mathcal{P}(n_1, n_2, n_3, n_4)|,
$$
the number of solid partitions inside the box $[n_1] \times [n_2] \times [n_3] \times [n_4]$. 

\begin{remark}
[On higher-dimensional Schur polynomials and SSYT] Note that the $d$-dimensional Grothendieck polynomials $g_{\rho}(\mathbf{x})$ are inhomogeneous. It is well known that for $d = 2$ we have $g_{\lambda} = s_{\lambda} + \text{lower degree terms}$. By analogy, the top degree homogeneous component of $g_{\rho}(\mathbf{x})$ denoted by $s_{\rho}(\mathbf{x})$ can be viewed as a higher-dimensional analogue of Schur polynomials. It sums over a subset of $d$-dimensional partitions which are analogous to {\it semistandard Young tableaux} (SSYT) for the case $d = 2$. By Theorem~\ref{qsym}, $\{s_{\rho}\}$ are also quasisymmetric polynomials. Are there any interesting properties of these functions and tableaux? 
\end{remark}

\section{Last passage percolation in $\mathbb{Z}^{d}$}\label{sec:lpp}

In this section we consider a directed last passage percolation model with geometric weights and show its connections with $d$-dimensional Grothendieck polynomials studied in the previous section. 

Let $W = (w_{\mathbf{i}})_{\mathbf{i} \in \mathbb{Z}^d_+}$ be a random matrix with i.i.d. entries $w_{\mathbf{i}}$ which have geometric distribution with parameter $q \in (0,1)$, i.e. 
$$
\mathrm{Prob}(w_{\mathbf{i}} = k) = (1 - q)\, q^k, \quad k \in \mathbb{N}.
$$ 
Define the last passage times as follows
$$
G(\mathbf{i}) = G(\mathbf{1} \to \mathbf{i}) = \max_{\Pi : \mathbf{1} \to \mathbf{i}}\,  \sum_{\mathbf{j} \in \Pi} w_{\mathbf{j}}, \quad \mathbf{i} \in \mathbb{Z}^d_+,
$$
where the maximum is over directed lattice paths $\Pi$ from $(1,\ldots, 1)$ to $\mathbf{i}$. Using Kingman's subadditivity theorem, one can show that there is a deterministic {\it limit shape} $\varphi : \mathbb{R}^{d}_{\ge 0} \to \mathbb{R}_{\ge 0}$ (see \cite{martin}) such that as $n \to \infty$ we have a.s. convergence 
$$
\frac{1}{n} G(\lfloor n\mathbf{x} \rfloor) \to \varphi(\mathbf{x}), \quad \mathbf{x} \in \mathbb{R}^d_{\ge 0}.
$$
The case $d = 2$ is exactly solvable and 
$\varphi(x,y) = (x + y + 2\sqrt{qxy})/(1 - q)$; moreover, the fluctuations around the shape are of order $n^{1/3}$ and tend to the Tracy-Widom distribution  \cite{johansson}. However, much less is known for $d \ge 3$. 

Now we are going to show that $d$-dimensional Grothendieck polynomials naturally appear in distribution formulas for this model.

\begin{theorem}\label{lpp}
Let $n_1,\ldots, n_d \in \mathbb{Z}_+$ and $\rho \in \mathcal{P}(n_2,\ldots, n_{d}, \infty)$ be a $(d-1)$-dimensional partition. Denote $\mathbf{n} = (n_2 + 1,\ldots, n_{d} + 1)$ and $N = n_1 \cdots n_{d}$. We have the following joint distribution formula
\begin{align*}
\mathrm{Prob}\left( G(n_1, \mathbf{n} - \mathbf{i}) = \rho_{\mathbf{i}}\, :\,  \mathbf{i} \in [n_2] \times \cdots \times [n_{d}] \right) = (1 - q)^{N}\, g_{\rho}(\underbrace{q,\ldots, q}_{n_1 \text{ times}}). 
\end{align*}
\end{theorem}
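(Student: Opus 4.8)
The plan is to transport the random weight field to the combinatorial bijection $\Phi$ of Theorem~\ref{bij} via a coordinate reversal, and then recognize the resulting weighted count as the specialization of $g_\rho$ appearing in \eqref{specg}. The key is that the percolation convention (paths emanating from $\mathbf{1}$) becomes the convention used to define $\Phi$ (paths running out to infinity) once we reverse the box.

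First I would observe that the joint event depends on only finitely many weights. A directed path from $\mathbf{1}$ to an endpoint $(n_1, \mathbf{n} - \mathbf{i})$ with $\mathbf{i} \in [n_2] \times \cdots \times [n_d]$ has every coordinate bounded by $(n_1, n_2, \ldots, n_d)$, so each $G(n_1, \mathbf{n} - \mathbf{i})$ is a function only of the weights $w_{\mathbf{j}}$ with $\mathbf{j}$ in the box $B = [n_1] \times \cdots \times [n_d]$. Marginalizing over the remaining (independent) weights, the probability of the event equals $\sum (1 - q)^{N} q^{|W|}$, where $N = |B| = n_1 \cdots n_d$, $|W| = \sum_{\mathbf{j} \in B} w_{\mathbf{j}}$, and the sum runs over all nonnegative integer fillings $(w_{\mathbf{j}})_{\mathbf{j} \in B}$ producing the prescribed last passage times.

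Next I would realize these fillings through $\Phi$. Let $r(\mathbf{j}) = (n_1 + 1 - j_1, \ldots, n_d + 1 - j_d)$ be the reversal of $B$ and set $a_{\mathbf{j}} = w_{r(\mathbf{j})}$, so that $A = (a_{\mathbf{j}}) \in \mathcal{M}(n_1, \ldots, n_d)$ and $|A| = |W|$. Under $r$, a directed path from $\mathbf{1}$ to $(n_1, \mathbf{n} - \mathbf{i})$ reverses to a directed path from $(1, i_2, \ldots, i_d)$ to $(n_1, \ldots, n_d)$ with the same total weight; since $A$ is supported in $B$, the last passage time to this far corner coincides with the last passage time to infinity used in $\Phi$. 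Writing $\pi = \Phi(A) \in \mathcal{P}(n_1, \ldots, n_d, \infty)$, this gives $\pi_{1, i_2, \ldots, i_d} = G(n_1, \mathbf{n} - \mathbf{i})$, whence by the description of $\mathrm{sh}_1$ the event $\{ G(n_1, \mathbf{n} - \mathbf{i}) = \rho_{\mathbf{i}} : \mathbf{i} \}$ is exactly $\{ \mathrm{sh}_1(\pi) = \rho \}$.

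Finally I would evaluate the weights. Specializing $x^{(1)}_i = q$ for all $i$ and $x^{(k)}_i = 1$ for $k \ge 2$ turns $w_A$ into $q^{|A|}$ and $w(\pi)$ into $q^{\mathrm{cor}(\pi)}$, so Lemma~\ref{wei} yields $q^{|W|} = q^{|A|} = q^{\mathrm{cor}(\pi)}$. Summing over the fillings and using that $\Phi$ restricts to a bijection onto the $\pi \in \mathcal{P}(n_1, \ldots, n_d, \infty)$ with $\mathrm{sh}_1(\pi) = \rho$, the probability becomes $(1 - q)^{N} \sum_{\mathrm{sh}_1(\pi) = \rho} q^{\mathrm{cor}(\pi)}$, which by \eqref{specg} together with $\sum_i c_i(\pi) = \mathrm{cor}(\pi)$ equals $(1 - q)^{N} g_\rho(q, \ldots, q)$ with $n_1$ arguments. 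The point requiring the most care is the reversal reconciling the two last-passage conventions, combined with the verification that only in-box weights matter; once this correspondence is in place the remaining steps are direct applications of Lemma~\ref{wei} and the bijection, and the identity holds as a convergent power series in $q \in (0, 1)$.
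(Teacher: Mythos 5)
Your proof is correct and follows essentially the same route as the paper's: the coordinate reversal $r$ is exactly the paper's flipped matrix $W'=(w_{(n_1+1,\mathbf{n})-\mathbf{i}})$, after which both arguments apply the bijection $\Phi$, the weight preservation of Lemma~\ref{wei}, and the specialization \eqref{specg}. Your write-up merely makes explicit two points the paper leaves terse (that the event depends only on in-box weights, and that the reversal identifies $\pi_{1,i_2,\ldots,i_d}$ with $G(n_1,\mathbf{n}-\mathbf{i})$), which is a welcome clarification rather than a different method.
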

\begin{proof}
Let us flip and truncate the matrix $W$ to get $W' =(w'_{\mathbf{i}}) = (w_{(n_1 + 1,\mathbf{n}) - \mathbf{i}} )_{\mathbf{i} \in [n_1] \times \cdots \times [n_d]}$. 

Let $\pi = (\pi_{\mathbf{i}}) \in \mathcal{P}(n_1,\ldots, n_d, \infty)$ and $(a_{\mathbf{i}}) = \Phi^{-1}(\pi)$. We obtain
$$
\mathrm{Prob}(W' = \Phi^{-1}(\pi)) = \prod_{\mathbf{i} \in [n_1] \times \cdots \times [n_{d}]} \mathrm{Prob}(w'_\mathbf{i} = a_{\mathbf{i}}) = (1-q)^{N} q^{S(\pi)}, 
$$
where $S(\pi) = \sum_{\mathbf{i}} a_{\mathbf{i}}$. Note that from \eqref{specg} we have
\begin{align*}
(1 - q)^{N} g_{\rho}(\underbrace{q,\ldots, q}_{n_1 \text{ times}}) &= (1 - q)^{N} \sum_{\pi\, :\, \mathrm{sh}_1(\pi) = \rho} q^{c_1(\pi) + \ldots + c_{n_1}(\pi)} \\
&= \sum_{\pi\, :\, \mathrm{sh}_1(\pi) = \rho} (1 - q)^{N} q^{S(\pi)} \\
&= \sum_{\pi\, :\, \mathrm{sh}_1(\pi) = \rho} \mathrm{Prob}(W' = \Phi^{-1}(\pi)),
\end{align*}
where the sum runs over $\pi \in \mathcal{P}(n_1,\ldots, n_d, \infty)$.
Observe that we have $\Phi(W') = (G(\mathbf{i}))_{\mathbf{i} \in \mathbb{Z}^d_+}$. Therefore, now we get
\begin{align*}
\mathrm{Prob}\left(G(n_1, \mathbf{n} - \mathbf{i}) = \rho_{\mathbf{i}}\, :\,  \mathbf{i} \in [n_2] \times \cdots \times [n_{d}] \right)
&= \sum_{\pi\, :\, \mathrm{sh}_1(\pi) = \rho} \mathrm{Prob}(\Phi(W') = \pi) \\
&= (1 - q)^{N} g_{\rho}(\underbrace{q,\ldots, q}_{n_1 \text{ times}})
\end{align*}
as needed. 
\end{proof}

\begin{corollary}[Single point distribution formula]
We have
$$
\mathrm{Prob}(G(n_1,\ldots, n_d) \le n) = (1 - q)^{N}\, g_{[n_2] \times \cdots \times [n_d] \times [n]}(1,\underbrace{q,\ldots, q}_{n_1 \text{ times}})
$$
\end{corollary}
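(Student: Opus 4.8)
The plan is to derive this single-point formula by marginalizing the joint distribution formula of Theorem~\ref{lpp} and then collapsing the resulting sum with the branching rule of Lemma~\ref{branch}.

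First I would locate the single random variable $G(n_1,\ldots,n_d)$ inside the collection $\{G(n_1, \mathbf{n} - \mathbf{i}) : \mathbf{i} \in [n_2]\times\cdots\times[n_d]\}$ that appears in Theorem~\ref{lpp}. Since $\mathbf{n} = (n_2+1,\ldots,n_d+1)$, the choice $\mathbf{i} = (1,\ldots,1)$ gives $\mathbf{n} - \mathbf{i} = (n_2,\ldots,n_d)$, so that $G(n_1,\ldots,n_d) = \rho_{1,\ldots,1}$ is the corner entry of the random $(d-1)$-dimensional partition $\rho$ recorded by the last passage values. Monotonicity of last passage times guarantees that $\rho_{\mathbf{i}} = G(n_1, \mathbf{n}-\mathbf{i})$ is genuinely a partition (decreasing in each coordinate of $\mathbf{i}$), and hence its maximum is attained precisely at $\mathbf{i} = (1,\ldots,1)$.

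Next I would sum the joint formula over all admissible outcomes satisfying $G(n_1,\ldots,n_d)\le n$. Because $G(n_1,\ldots,n_d) = \rho_{1,\ldots,1}$ is the largest entry of $\rho$, the event $\{G(n_1,\ldots,n_d)\le n\}$ coincides with $\{\rho_{1,\ldots,1}\le n\}$, which is equivalent to $\rho \subseteq B$, where $B := [n_2]\times\cdots\times[n_d]\times[n]$. Since the events indexed by distinct outcomes $\rho$ partition the sample space, marginalizing Theorem~\ref{lpp} yields
$$
\mathrm{Prob}(G(n_1,\ldots,n_d)\le n) = (1-q)^N \sum_{\rho \subseteq B} g_{\rho}(\underbrace{q,\ldots,q}_{n_1 \text{ times}}).
$$

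Finally I would apply the branching rule $g_{\pi}(1,x_1,\ldots,x_n) = \sum_{\rho\subseteq\pi} g_{\rho}(x_1,\ldots,x_n)$ from Lemma~\ref{branch} with $\pi = B$ and $x_1 = \cdots = x_{n_1} = q$, which collapses the sum into $g_{B}(1,q,\ldots,q)$ and produces the claimed identity. The only delicate point is the bookkeeping in the first step: one must keep the flip $W' = (w_{(n_1+1,\mathbf{n})-\mathbf{i}})$ consistent so that $G(n_1,\ldots,n_d)$ really sits at the corner index $\mathbf{i}=(1,\ldots,1)$ and the inequality translates exactly into $\rho\subseteq B$. Once this correspondence is pinned down, both the marginalization and the branching step are immediate.
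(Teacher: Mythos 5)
Your proof is correct and takes essentially the same route as the paper, whose entire proof reads ``follows by combining the theorem with Lemma~\ref{branch}'': your marginalization of Theorem~\ref{lpp} over all $\rho$ with $\rho_{1,\ldots,1}\le n$, followed by the branching rule at $\pi = [n_2]\times\cdots\times[n_d]\times[n]$ and $x_1=\cdots=x_{n_1}=q$, is exactly that combination. You have merely made explicit the bookkeeping the paper leaves implicit, namely that $G(n_1,\ldots,n_d)=\rho_{1,\ldots,1}$ is the maximal entry of the last-passage array, so the event $\{G(n_1,\ldots,n_d)\le n\}$ is the disjoint union of the events $\{\text{array}=\rho\}$ over $D(\rho)\subseteq [n_2]\times\cdots\times[n_d]\times[n]$.
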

\begin{proof}
Follows by combining the theorem with Lemma~\ref{branch}. 
\end{proof}

\begin{corollary}[The case $d = 2$]
Let $\lambda \in \mathcal{P}(n_2, \infty)$ be a partition. We have 
\begin{align*}
\mathrm{Prob}\left(G(n_1, n_2 + 1 - i) = \lambda_i \, :\,  i \in [n_2] \right) = (1 - q)^{n_1 n_2}\, g_{\lambda}(\underbrace{q,\ldots, q}_{n_1 \text{ times}}). 
\end{align*}
\end{corollary}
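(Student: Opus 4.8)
The plan is to obtain this statement as the immediate $d = 2$ specialization of Theorem~\ref{lpp}. First I would set $d = 2$ throughout the hypotheses of that theorem: the tuple $n_1, \ldots, n_d$ becomes just the pair $(n_1, n_2)$, and the $(d-1)$-dimensional partition $\rho \in \mathcal{P}(n_2, \ldots, n_d, \infty)$ becomes a $1$-dimensional partition, i.e. an ordinary partition $\lambda \in \mathcal{P}(n_2, \infty)$. This is the only conceptual identification needed, since for $d=2$ the ``shape with respect to the first coordinate'' indexing the polynomial $g_\rho$ is literally an integer partition.

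Next I would collapse the multi-index notation to a single index. The vector $\mathbf{n} = (n_2 + 1, \ldots, n_d + 1)$ reduces to the single number $n_2 + 1$, and the index $\mathbf{i} \in [n_2] \times \cdots \times [n_d]$ runs over $\mathbf{i} = i \in [n_2]$. Consequently $G(n_1, \mathbf{n} - \mathbf{i})$ becomes $G(n_1, n_2 + 1 - i)$, the normalizing exponent $N = n_1 \cdots n_d$ becomes $n_1 n_2$, and the entry $\rho_{\mathbf{i}}$ becomes $\lambda_i$. Substituting these into the conclusion of Theorem~\ref{lpp} yields exactly the asserted identity, with right-hand side $(1-q)^{n_1 n_2}\, g_\lambda(q, \ldots, q)$ where $q$ is repeated $n_1$ times.

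The only point requiring care---and the closest thing to an obstacle here---is confirming that the index reversal built into $\mathbf{n} - \mathbf{i}$ specializes correctly to $n_2 + 1 - i$, which in turn reflects the flip $W' = (w_{(n_1+1, \mathbf{n}) - \mathbf{i}})$ used in the proof of Theorem~\ref{lpp}. Once this bookkeeping is verified, no further computation is needed: the factor $g_\lambda(q, \ldots, q)$ is precisely the one-variable-set specialization $g_\lambda(\mathbf{x})$ of the dual Grothendieck polynomial from Example~\ref{ex:d2}, evaluated at $x_1 = \cdots = x_{n_1} = q$, so the corollary follows at once.
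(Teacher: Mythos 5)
Your proposal is correct and matches the paper's route exactly: the corollary is stated there as an immediate specialization of Theorem~\ref{lpp} to $d=2$, with precisely the bookkeeping you carry out ($\rho \mapsto \lambda$, $\mathbf{n} \mapsto n_2+1$, $N \mapsto n_1 n_2$, and $g_\rho \mapsto g_\lambda$ in the one-variable-set specialization of eq.~\eqref{specg}). No further argument is needed.
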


\begin{remark}
This formula (which shows that dual symmetric Grothendieck polynomials arise naturally in the last passage percolation model) was proved in \cite{dy4} and in more general case with different parameters in \cite{dy6}. Note that in this case we can obtain many determinantal formulas. 
\end{remark}

\begin{remark}
Theorem~\ref{lpp} suggests a probability distribution on the set $\mathcal{P}(n_2,\ldots, n_{d}, \infty)$ of $(d-1)$-dimensional partitions defined as follows: 
$$
\mathrm{Prob}_{g}(\rho) := (1 - q)^{n_1 \cdots n_d}\, g_{\rho}(\underbrace{q,\ldots, q}_{n_1 \text{ times}}), \qquad \rho \in \mathcal{P}(n_2,\ldots, n_{d}, \infty).
$$ 
\end{remark}

\section{Concluding remarks and open questions}\label{sec:final}
\subsection{} 
After defining plane partitions in EC2 \cite[Ch.~7.20]{sta}, Richard Stanley writes:
\begin{quote}
`` ... It now seems obvious to define {\it $r$-dimensional partitions} for any $r \ge 1$. However, almost nothing significant is known for $r \ge 3$."
\end{quote}
Few more remarks and references on the subject can be found in an early survey \cite{sta2} (on the theory of plane partitions). For more recent works, see \cite{mr, bgp, gov, dg}. 

\subsection{Asymptotics} MacMahon's numbers $m_d(n)$ have the following asymptotics  \cite{bgp}
$$
\lim_{n \to \infty} n^{-d/(1 + d)}\log m_d(n) = 
\frac{1 + d}{d} \left(d\, \zeta(1 + d) \right)^{1/(1 + d)},
$$ 
where $\zeta$ is the Riemann zeta function (which is computed based on the explicit formula for the generating function). 
It was conjectured in \cite{bgp}  and (for solid partitions)  in \cite{mr} supported by numerical experiments, that $p_d(n)$, the number of $d$-dimensional partitions of volume (size) $n$, has exactly the same asymptotics. However, later computations reported in \cite{dg} suggest that this is not the case (for $d = 3$) and that $p_3(n)$ is asymptotically larger than $m_3(n)$ (despite the fact that $m_3(n) = p_3(n)$ for $n \le 5$ and $m_3(n) > p_3(n)$ for the next many values of $n$ \cite{atkin, dg}; cf. the sequences A000293, A000294 in \cite{oeis}). See also \cite{ekhad} and a useful resource \cite{boltzmann} for more related data. 
Given our interpretation for $m_d(n)$ (Corollary~\ref{intm}), is it possible to compare them with $p_d(n)$? 

\subsection{$d$-dimensional Grothendieck polynomials} Are there any (algebraic, determinantal) formulas for $d$-dimensional Grothendieck polynomials? They will be important for at least two applications: enumeration of boxed higher-dimensional partitions, and computing distribution formulas (or performing asymptotic analysis) for the last passage percolation problem discussed above. 
Note that for $d = 2$, there are several determinantal formulas (Jacobi-Trudi, bialternant types) known, see \cite{dy, ay}. 

\section*{Acknowledgements}
We are greateful to Askar Dzhumadil'daev, Suresh Govindarajan, and Igor Pak for useful conversations. 


\end{document}